\def\la{\langle}
\def\ra{\rangle}
\def\dbar{\bar\partial}
\def\R{{\mathbb R}}
\def\C{{\mathbb C}}
\def\Cn{\C^n}
\def\codim{{\rm codim\,}}
\def\N{{\mathbb N}}
\def\Ok{{\mathcal O}}
\def\sing{{{\rm sing}}}
\newcommand{\Com}[1]{}
\DeclareMathOperator{\Id}{Id}
\DeclareMathOperator{\supp}{supp}
\DeclareMathOperator{\ann}{ann}
\DeclareMathOperator{\depth}{depth}
\DeclareMathOperator{\Ext}{Ext}
\DeclareMathOperator{\Hom}{Hom}
\DeclareMathOperator{\im}{im}
\DeclareMathOperator{\coker}{coker}
\def\be{\begin{equation}}
\def\ee{\end{equation}}
\newtheorem{thm}{Theorem}[section]
\newtheorem{lma}[thm]{Lemma}
\newtheorem{cor}[thm]{Corollary}
\newtheorem{prop}[thm]{Proposition}
\theoremstyle{definition}
\theoremstyle{remark}
\newtheorem{preremark}[thm]{Remark}
\newtheorem{preex}[thm]{Example}
\newenvironment{remark}{\begin{preremark}}{\end{preremark}}
\newenvironment{ex}{\begin{preex}}{\end{preex}}
\numberwithin{equation}{section}
\begin{document}

\title{Explicit versions of the local duality theorem in $\C^n$}

\author{Richard L\"ark\"ang}

\date{\today}

\address{Department of Mathematics, Chalmers University of Technology and the University of Gothenburg, 412 96 Gothenburg, Sweden}

\email{larkang@chalmers.se}

\subjclass{32A27,14B15,32C30}

\keywords{}

\thanks{The author was supported the Swedish Research Council.}

\begin{abstract}
    We consider versions of the local duality theorem in $\C^n$.
    We show that there exist canonical pairings in these versions of the duality theorem
    which can be expressed explicitly in terms of residues of Grothendieck, or in
    terms of residue currents of Coleff-Herrera and Andersson-Wulcan,
    and we give several different proofs of non-degeneracy of the pairings.
    One of the proofs of non-degeneracy uses the theory of linkage, and
    conversely, we can use the non-degeneracy to obtain results about linkage
    for modules.
    We also discuss a variant of such pairings based on residues considered by
    Passare, Lejeune-Jalabert and Lundqvist.
\end{abstract}

\maketitle
\section{Introduction}

Let $\Ok = \Ok_{\C^n,0}$ be the local ring of germs of holomorphic functions at $0 \in \C^n$.
We will in this article mainly consider generalizations of the variant of the local duality theorem of Grothendieck,
as presented by Hartshorne in \cite{HLocal}*{Theorem~6.3}, see Theorem~\ref{thm:grothendieck-duality} below.
We will also later discuss a variant of this expressed as integrals, generalizing the variant of the local duality
theorem for Artinian complete intersection ideals as presented in \cite{GH}*{p.~693}.

An important purpose of this paper is to attempt to clarify the connection between the classical duality theory
in commutative algebra arising from the local cohomology theory of Grothendieck, and the theory of residue currents in complex analysis,
which has seen a strong development in recent years. More precisely, the variants of the local duality theorem of Grothendieck,
as referred to above, are various ``residue pairings'' whose existence and properties we show can be proved using both theories.
By constructing explicit maps we show that these seemingly different ways of defining the pairings indeed give the same pairings.
In the case of Artinian complete intersection ideals, it is classical that there are such relations between these theories,
while for more general ideals or modules, such relations have only to a small extent been explored. 

\medskip

Let $G$ be a finitely generated $\Ok$-module. We let
\begin{align*}
    & G_{(p)} := \{ g \in G \mid \codim (\supp g) \geq p \} \text{ and } \\ &G^{(p)}:=G_{(p)}/G_{(p+1)}.
\end{align*}
The torsion elements of $G$ are precisely the elements in $G_{(1)}$.
For any $G$, there is a natural pairing $G \times \Hom(G,\Ok) \to \Ok$, given by $(g,\varphi) \mapsto \varphi(g)$.
Since $\Ok$ is torsion-free, elements of $G_{(1)}$ are mapped to $0$, so the pairing descends to a pairing
\begin{equation} \label{eq:codim0pairing}
    G/G_{(1)} \times \Hom(G,\Ok) \to \Ok,
\end{equation}
and it is classical that this pairing is non-degenerate, see for example \cite{GR}*{3.3.3, p. 69}.

If $\codim G \geq 1$, then $G/G_{(1)} = 0$, so the pairing \eqref{eq:codim0pairing} is uninteresting,
but we consider here a variant of it.
For a germ of a subvariety $Z \subseteq (\C^n,0)$, we let $H^p_Z(\Ok)$ denote the $p$-th local cohomology
module of $\Ok$ with support in $Z$, see Section~\ref{ssect:grothendieck-preliminaries}.

\medskip

\begin{thm} \label{thm:local-duality}
    Let $G$ be a finitely generated $\Ok$-module of codimension $\geq p$,
    and let $Z \supseteq \supp G$, where $Z$ has pure codimension $p$.
    There exists a canonical explicitly given pairing
    \begin{equation} \label{eq:local-duality-pairing}
        G \times \Ext^p(G,\Ok) \to H^p_Z(\Ok),
    \end{equation}
    which is functorial in $G$, and which descends to a non-degenerate pairing
    \begin{equation} \label{eq:local-duality-pairing-descended}
        G/G_{(p+1)} \times \Ext^p(G,\Ok) \to H^p_Z(\Ok).
    \end{equation}
    The pairing \eqref{eq:local-duality-pairing} can be defined by either \eqref{eq:pairing-grothendieck}
    or \eqref{eq:pairing-aw}.
\end{thm}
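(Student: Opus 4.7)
The plan is to build the pairing \eqref{eq:local-duality-pairing} from a free resolution of $G$, check that the two explicit descriptions \eqref{eq:pairing-grothendieck} and \eqref{eq:pairing-aw} agree and are independent of all choices, and then deduce non-degeneracy on $G/G_{(p+1)}$ by reducing to the Artinian complete intersection case from \cite{GH}*{p.~693}.

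For the construction I would fix a free resolution $(E_\bullet, f_\bullet) \to G$ of length at least $p$. The Andersson--Wulcan formula uses the associated residue current $R = \sum_k R_k$, where $R_k$ is a $\Hom(E_0, E_k)$-valued $(0,k)$-current supported in $Z$. Given $g \in G$ lifted to $e_0 \in E_0$ and a cocycle $\varphi : E_p \to \Ok$ representing a class in $\Ext^p(G, \Ok)$, set
\begin{equation*}
    (g, \varphi) := [\varphi(R_p e_0)] \in H^p_Z(\Ok),
\end{equation*}
using the Dolbeault--Grothendieck isomorphism between $H^p_Z(\Ok)$ and the $\dbar$-cohomology of currents supported in $Z$. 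Independence of the lift of $g$ and of the cocycle representative of $\varphi$ follows from the standard relations $\dbar R_k = f_{k+1} R_{k+1}$ together with $\varphi \circ f_{p+1} = 0$.

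For the Grothendieck version \eqref{eq:pairing-grothendieck} I would realise $H^p_Z(\Ok)$ as a \v{C}ech group on a covering adapted to a local complete intersection containing $Z$, and write the pairing via Grothendieck residue symbols; comparison with \eqref{eq:pairing-aw} then reduces to the classical identification of the Coleff--Herrera product with the Grothendieck residue for a regular sequence. Functoriality in $G$ is immediate by lifting a morphism $\alpha : G' \to G$ to a morphism of resolutions and using the compatibility of the associated residue currents. For descent to $G/G_{(p+1)}$: if $\supp g$ has codimension $>p$, then $R_p e_0$ is a pseudomeromorphic $(0,p)$-current supported on a set of codimension $\geq p+1$, and hence vanishes by the dimension principle for such currents.

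The main obstacle is non-degeneracy of \eqref{eq:local-duality-pairing-descended}. I would pick a maximal regular sequence $a = (a_1, \ldots, a_p)$ in the annihilator of $G$, so that $G/aG$ is a module over the Artinian ring $\Ok/(a)$, and reduce both directions to this setting. For the right radical, a nonzero class in $\Ext^p(G, \Ok)$ restricts to a nonzero element of $\Hom_{\Ok/(a)}(G/aG, \Ok/(a))$, which by the explicit complete-intersection duality of \cite{GH}*{p.~693} pairs nontrivially with some $g \in G$. For the left radical, any $g \notin G_{(p+1)}$ has nonzero image in $G/aG$, and the classical non-degeneracy of \eqref{eq:codim0pairing} for $G/aG$ over $\Ok/(a)$ together with a lifting argument from $\Hom_{\Ok/(a)}$ up to $\Ext^p_\Ok$ produces the required $\varphi$. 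As indicated in the introduction, linkage offers an alternative route to the left-radical statement, which I would present in parallel; running that argument in reverse is then what gives the advertised feedback into the linkage theory of modules.
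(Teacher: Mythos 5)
Your construction of the Andersson--Wulcan pairing from a free resolution, the argument that it descends modulo $G_{(p+1)}$ via the dimension principle for pseudomeromorphic currents, and the general shape of the comparison with the Grothendieck version are all in line with the paper (Sections 2 and 5). But two steps need more work, and one of them is a genuine gap.

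Functoriality is not as immediate as you suggest. After lifting $\alpha : G' \to G$ to a chain map $a : (K,\psi) \to (E,\varphi)$, you still have to show that $\xi_0 R^E_p a_0 f_0$ and $(a_p^* \xi_0) R^K_p f_0$ agree; this is precisely the comparison formula for residue currents of Theorem~\ref{thm:Rcomparison}, which produces correction terms $\varphi_{p+1} M_{p+1} - \dbar M_p$ that then have to be killed by the orthogonality $\xi_0 \varphi_{p+1} = 0$ and again the dimension principle. That is Lemma~\ref{lma:functorial-aw}, not a one-line consequence of the existence of a chain lift.

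The serious problem is in the non-degeneracy argument. You propose to pick a maximal regular sequence $a = (a_1,\dots,a_p)$ in $\ann G$ and invoke the Artinian complete-intersection duality of \cite{GH}*{p.~693} for $G$ over $\Ok/(a)$. But for $p < n$ the ring $\Ok/(a)$ is Gorenstein of Krull dimension $n - p > 0$, hence not Artinian, and the pairing on p.~693 of \cite{GH} is exactly the case $p = n$ of the theorem you are trying to prove. So as written the argument only covers $p = n$; for $p < n$ the appeal to that reference is vacuous, and replacing it with a duality statement over the non-Artinian Gorenstein ring $\Ok/(a)$, together with the change-of-rings identification $\Ext^p_\Ok(G,\Ok) \cong \Hom_{\Ok/(a)}(G,\Ok/(a))$ and a comparison of the two targets $\Ok/(a)$ and $H^p_Z(\Ok)$, is essentially where the real work lies. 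The paper handles non-degeneracy in the second argument by reducing via a surjection $(\Ok/I)^r \to G$ (Lemma~\ref{lma:complete-intersection-surjective} and Lemma~\ref{lma:reduction-second}) to the complete-intersection case $G = \Ok/I$ of Lemma~\ref{lma:ci-nondeg}; non-degeneracy in the first argument needs either the linkage identity $J_{[p]} = I : (I : J)$ (Lemma~\ref{lma:reduction-first}, which only treats cyclic $G$) or the dualizing-complex/spectral-sequence argument of Proposition~\ref{prop:pure-nondeg-surjective}. You do gesture at linkage as a parallel route, but for $p < n$ it would have to become your primary argument, and even then it only handles $G = \Ok/J$.
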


When we say that the pairing is functorial in $G$, we mean that the following diagram
commutes for any finitely generated $\Ok$-module $F$ of codimension $\geq p$
and morphism $\alpha : F \to G$, where $Z \subseteq (\Cn,0)$ has pure codimension $p$
and $Z \supseteq (\supp F) \cup (\supp G)$.
\begin{equation} \label{eq:functorial}
    \begin{gathered}
    \xymatrix{G      & *-------{\times} & \Ext^p(G,\Ok) \ar[r] \ar[d]_{\alpha^*} & H^p_Z(\Ok) \ar@{=}[d] \\
    F \ar[u]^{\alpha}& *-------{\times} & \Ext^p(F,\Ok) \ar[r] & H^p_Z(\Ok)}
    \end{gathered}
\end{equation}

In the case when $p = n$, Theorem~\ref{thm:local-duality} becomes a special case of the local
duality theorem of Grothendieck, \cite{HLocal}*{Theorem~6.3}, see Remark~\ref{rem:grothendieck-general}.
For general $p$, that there exists a pairing \eqref{eq:local-duality-pairing} is a straightforward
generalization of construction of the pairing in \cite{HLocal}.
However, the formulation of the fact that this descends to a non-degenerate pairing \eqref{eq:local-duality-pairing-descended}
is to our knowledge new, when $p < n$. In addition, even in the already known case, when $p = n$, our proof is
quite different from the proof in \cite{HLocal}*{Theorem~6.3}.

This theorem is also very close to the duality theorem of Andersson in \cite{AndNoeth}*{Theorem~1.2},
which deals with the case when $G$ has pure codimension $p$, although there, functoriality is not proven.
In \cite{AndNoeth}, this pairing is defined with the help of certain residue currents constructed
by Andersson and Wulcan, \cite{AW1}. It is also proven that this pairing coincides with the pairing from
\cite{HLocal} (without explicitly referring to it). We give a direct proof of functoriality for
this pairing using a comparison formula for residue currents from \cite{LarComp}.

We consider now the special case when $G = \Ok/I$, where $I$ is a complete intersection ideal of codimension $p$,
i.e., $I$ can be defined by $p$ functions $I = J(f_1,\dots,f_p)$.
In this case, one can define the pairing \eqref{eq:local-duality-pairing} with the help of residue currents of
Coleff and Herrera, \cite{CH}, and the non-degeneracy of the pairing then becomes the duality theorem
for Coleff-Herrera products, as proven independently by Passare, \cite{PMScand}, and Dickenstein-Sessa, \cite{DS1},
see Section~\ref{sect:ci} below.
For certain morphisms between such complete intersection modules, one obtains functoriality from
the transformation law for Coleff-Herrera products, \cite{DS1} or \cite{DS2}, see Example~\ref{ex:ci-morphism}
and \eqref{eq:transformation-law}.

\medskip

We give three different proofs of the non-degeneracy of the pairing in Theorem~\ref{thm:local-duality}.
One proof which is heavily based on homological algebra in Section~\ref{sect:general-pairing}, another
based on the theory of residue currents, similar to the proof in \cite{AndNoeth}, in Section~\ref{sect:aw-pairing},
and a proof using the theory of linkage in Section~\ref{sect:linkage}, although this last proof only
gives non-degeneracy in the first argument when $G$ is cyclic, i.e., of the form $\Ok/J$ for some ideal $J \subseteq \Ok$.

\bigskip

We now return to the pairing \eqref{eq:codim0pairing}. That it is non-degenerate in the first argument
can be reformulated as that the morphism
\begin{equation} \label{eq:codim0induced}
    G/G_{(1)} \to \Hom(\Hom(G,\Ok),\Ok)
\end{equation}
given by $g \mapsto (\varphi \mapsto \varphi(g))$ is injective. It is also well-understood when
\eqref{eq:codim0induced} is an isomorphism, which
is the case if and only if $G/G_{(1)}$ is $S_2$, see for example \cite{ST}*{Corollary~1.21}.
By $S_2$, we refer to the Serre $S_k$-conditions, see Section~\ref{ssect:Zp-Sk}.
Note also that the other morphism induced by \eqref{eq:codim0pairing}
is the morphism $\Hom(G,\Ok) \to \Hom(G/G_{(1)},\Ok)$ given by $\varphi \mapsto (g \mapsto \varphi(g))$,
which is easily seen to be an isomorphism.
Our next result is that this holds also for the pairing \eqref{eq:local-duality-pairing-descended}.

\begin{thm} \label{thm:local-duality-surjective}
    Let $G$ and $Z$ be as in Theorem~\ref{thm:local-duality}. The injection
    \begin{equation} \label{eq:local-duality-surjection1}
        G/G_{(p+1)} \to \Hom(\Ext^p(G,\Ok),H^p_Z(\Ok))
    \end{equation}
    induced by the non-degenerate pairing \eqref{eq:local-duality-pairing-descended} is an isomorphism
    if and only if $G/G_{(p+1)}$ is $S_2$, and the injection
    \begin{equation} \label{eq:local-duality-surjection2}
        \Ext^p(G,\Ok) \to \Hom(G/G_{(p+1)},H^p_Z(\Ok))
    \end{equation}
    induced by the non-degenerate pairing \eqref{eq:local-duality-pairing-descended} is always an isomorphism.
\end{thm}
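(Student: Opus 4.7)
The plan is to reduce both parts of the theorem to two ingredients: (a) a \emph{shift isomorphism}
\[
    \Ext^p(N,\Ok) \cong \Hom(N, H^p_Z(\Ok))
\]
valid for every finitely generated $\Ok$-module $N$ with $\supp N \subseteq Z$, and (b) the classical characterisation of Serre's condition $S_2$ as reflexivity with respect to the functor $\Ext^p(-,\Ok)$ for modules of pure codimension $p$ over a regular local ring. For (a), I use Lemma~\ref{lma:HpZW} to enlarge $Z$ to a complete intersection $V(f_1,\dots,f_p)$ cut out by a regular sequence in $I_Z$; the Koszul computation of local cohomology then gives $H^j_Z(\Ok) = 0$ for $j \neq p$. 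For $N$ supported on $Z$, the Grothendieck composition spectral sequence
\[
    E_2^{i,j} = \Ext^i(N, H^j_Z(\Ok)) \Longrightarrow \Ext^{i+j}(N,\Ok)
\]
collapses to the row $j = p$ and yields $\Ext^{i+p}(N,\Ok) \cong \Ext^i(N, H^p_Z(\Ok))$ for all $i \geq 0$, and in particular (a) at $i = 0$. Matching this edge map with the map induced by the pairing is then read off from the explicit construction in Section~\ref{sect:general-pairing}.

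Granting (a), the isomorphism \eqref{eq:local-duality-surjection2} follows immediately. Apply (a) to $N = G$ to obtain $\Ext^p(G,\Ok) \cong \Hom(G, H^p_Z(\Ok))$. Now $\Ass H^p_Z(\Ok)$ consists only of codimension-$p$ primes (visible from $H^p_Z(\Ok) = \varinjlim_k \Ok/(f_1^k,\dots,f_p^k)$), while $\supp G_{(p+1)}$ contains only primes of codimension $\geq p+1$, so $\Ass H^p_Z(\Ok) \cap \supp G_{(p+1)} = \emptyset$. Any nonzero morphism $\varphi \colon G_{(p+1)} \to H^p_Z(\Ok)$ would produce a nonzero finitely generated submodule of $H^p_Z(\Ok)$ whose associated primes lie in this empty intersection, a contradiction; hence $\Hom(G_{(p+1)}, H^p_Z(\Ok)) = 0$. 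Applying $\Hom(-, H^p_Z(\Ok))$ to $0 \to G_{(p+1)} \to G \to G/G_{(p+1)} \to 0$ then gives $\Hom(G, H^p_Z(\Ok)) = \Hom(G/G_{(p+1)}, H^p_Z(\Ok))$, which establishes \eqref{eq:local-duality-surjection2}.

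For \eqref{eq:local-duality-surjection1}, I apply (a) instead to $N = \Ext^p(G,\Ok)$ (which is supported on $Z$), obtaining $\Hom(\Ext^p(G,\Ok), H^p_Z(\Ok)) \cong \Ext^p(\Ext^p(G,\Ok), \Ok)$. The vanishing $\Ext^i(G_{(p+1)},\Ok) = 0$ for $i \leq p$ (a consequence of $\codim G_{(p+1)} \geq p+1$ and the Cohen--Macaulay property of $\Ok$) gives $\Ext^p(G,\Ok) = \Ext^p(G/G_{(p+1)},\Ok)$, and a short diagram chase identifies the map \eqref{eq:local-duality-surjection1} with the canonical biduality
\[
    G/G_{(p+1)} \to \Ext^p(\Ext^p(G/G_{(p+1)},\Ok), \Ok).
\]
Ingredient (b) then says this biduality is an isomorphism if and only if $G/G_{(p+1)}$ satisfies $S_2$; this is the higher-codimension analogue of the classical reflexivity/$S_2$ equivalence recalled just before the theorem statement, and reduces to it by localising at primes of codimension $\leq p+1$.

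The step I expect to be the main obstacle is the identification, in ingredient (a), of the spectral-sequence edge map with the map induced by the explicit pairing \eqref{eq:local-duality-pairing}: both are canonical, but they are described in rather different languages (derived functors on one side, explicit residues on the other), so the comparison requires careful bookkeeping of the defining data.
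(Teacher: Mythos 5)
Your proposal takes a genuinely different route from the paper, and it is broadly sound, but one of your two ingredients is not adequately justified as stated.

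\textbf{Comparison with the paper.} Your ingredient (a), the shift $\Ext^p(N,\Ok)\cong\Hom(N,H^p_Z(\Ok))$, plays the role of the paper's Lemma~\ref{lma:hom-to-extI} and Corollary~\ref{cor:exthomp}, but you derive it from the Grothendieck spectral sequence of $\Hom(N,-)\circ\Gamma_Z$ after enlarging $Z$ to a complete intersection $W$, whereas the paper uses a direct argument with the long exact sequence and the left exactness of $\Ext^p(\cdot,\Ok)$ on modules of codimension $\geq p$ (Corollary~\ref{cor:extp-left-exact}, Proposition~\ref{prop:grothendieck-isomorphism}). Two small corrections: what vanishes is $H^j_W(\Ok)$ for $j\neq p$, not $H^j_Z(\Ok)$ (the latter can fail for $j>p$ unless $Z$ is set-theoretically a complete intersection), and you then need that $\Hom(N,H^p_Z(\Ok))\to\Hom(N,H^p_W(\Ok))$ is an isomorphism for $N$ supported on $Z$, which holds because the $Z$-torsion of $H^p_W(\Ok)$ is $H^p_Z(\Ok)$; but this needs to be said. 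Your proof that $\Hom(G_{(p+1)},H^p_Z(\Ok))=0$ via associated primes is a pleasant alternative to the paper's use of \eqref{eq:extp-pure-isom}.

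\textbf{The gap.} Ingredient (b) is not ``classical reflexivity/$S_2$'' in any sense that reduces to the $p=0$ case by localisation. If you localise at a prime $\mathfrak p$ of codimension $p+1$ lying in $\supp(G/G_{(p+1)})$, you obtain a regular local ring of dimension $p+1$ and a module of dimension $1$, not a torsion-free module over a local ring, so the reflexivity/$S_2$ equivalence you cited from just before the theorem does not apply directly. What you actually need is the theorem of Roos (or Bj\"ork's elaboration): the biduality $G\to\Ext^p(\Ext^p(G,\Ok),\Ok)$ is injective for $G$ of pure codimension $p$, with cokernel supported in codimension $\geq p+2$; combined with the fact that $\Ext^p(\Ext^p(G,\Ok),\Ok)$ is always $S_2$ (Corollary~\ref{cor:extisS2}) and the Siu--Trautmann type extension criterion across codimension $\geq p+2$ (Corollary~\ref{cor:S2-isom}), this gives the iff with $S_2$. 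The paper precisely avoids invoking Roos as a black box by proving Proposition~\ref{prop:ssreplacement} from the double complex $\Hom(\Hom(G,I_J),I_J)$; that Proposition is where the real work is, and it also delivers the identification of the abstract biduality with the map induced by the explicit pairing. You acknowledge that map-identification as ``the main obstacle'' and defer it; in the paper it is the content of \eqref{eq:local-duality-isomorphism2} together with the last paragraph of the proof of Proposition~\ref{prop:ssreplacement}.

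\textbf{Bottom line.} Replace the localisation justification in (b) by a precise citation of Roos's theorem plus the $S_2$-extension argument (or reproduce the paper's spectral sequence proof), and fill in the identification of the edge maps with the pairing-induced maps. With those repairs the argument goes through and is a legitimate alternative to the paper's.
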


If $\codim G = n$, then $G = G/G_{(n+1)}$ is Cohen-Macaulay, and hence also $S_2$, so both \eqref{eq:local-duality-surjection1}
and \eqref{eq:local-duality-surjection2} are isomorphisms, which follows from \cite{HLocal}.
In case when $G$ has pure codimension $p \leq n$, then \eqref{eq:local-duality-surjection2}
follows from \cite{AndNoeth}*{Theorem~1.5}.
The remaining cases of Theorem~\ref{thm:local-duality-surjective} when $\codim G < n$ are to our
knowledge new.

\bigskip

We finally also consider the case when $G$ does not necessarily have codimension $\geq p$.

\begin{thm} \label{thm:local-duality-general}
    Let $G$ be a finitely generated $\Ok$-module, and let $Z \supseteq \supp G$, where
    $Z$ has pure codimension $p$.
    There exists a canonical explicitly given pairing
    \begin{equation} \label{eq:general-duality-pairing}
        G_{(p)} \times \Ext^p(G,\Ok) \to H^p_Z(\Ok),
    \end{equation}
    which is functorial in $G$, and which descends to a non-degenerate pairing
    \begin{equation} \label{eq:general-duality-pairing-descended}
        G^{(p)} \times \Ext^p(G,\Ok)^{(p)} \to H^p_Z(\Ok).
    \end{equation}
    The injection
    \begin{equation} \label{eq:general-duality-surjection1}
        G^{(p)} \to \Hom(\Ext^p(G,\Ok)^{(p)},H^p_Z(\Ok))
    \end{equation}
    induced by the non-degenerate pairing \eqref{eq:general-duality-pairing-descended} is an isomorphism
    if and only if $G^{(p)}$ is $S_2$, and the injection
    \begin{equation} \label{eq:general-duality-surjection2}
        \Ext^p(G,\Ok)^{(p)} \to \Hom(G^{(p)},H^p_Z(\Ok))
    \end{equation}
    induced by the non-degenerate pairing \eqref{eq:general-duality-pairing-descended} is an isomorphism
    if and only if $\Ext^p(G,\Ok)^{(p)}$ is $S_2$.
    If $G$ has codimension $\geq p$, then \eqref{eq:local-duality-pairing} and \eqref{eq:general-duality-pairing} coincide.
    The pairing \eqref{eq:general-duality-pairing} can be defined by either \eqref{eq:pairing-grothendieck}
    or \eqref{eq:pairing-aw}.
\end{thm}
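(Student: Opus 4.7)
The plan is to reduce Theorem~\ref{thm:local-duality-general} to Theorems~\ref{thm:local-duality} and~\ref{thm:local-duality-surjective}, applied to the torsion submodule $G_{(p)} \subseteq G$, which has codimension $\geq p$ (the case $G_{(p)} = 0$ is trivial).

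The key technical step is the identification
\begin{equation*}
\Ext^p(G,\Ok)^{(p)} \cong \Ext^p(G_{(p)},\Ok)^{(p)}.
\end{equation*}
Starting from the short exact sequence $0 \to G_{(p)} \to G \to G/G_{(p)} \to 0$, and using that every associated prime of $G/G_{(p)}$ has codimension strictly less than $p$, the Auslander--Buchsbaum formula yields $\codim \Ext^q(G/G_{(p)},\Ok) \geq p+1$ for every $q \geq p$: at any prime $\mathfrak{p}$ of codimension $\leq p$, either $(G/G_{(p)})_{\mathfrak{p}} = 0$ or $\depth(G/G_{(p)})_{\mathfrak{p}} \geq 1$, so $\pd(G/G_{(p)})_{\mathfrak{p}} < p \leq q$. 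The long exact sequence for $\Ext$ then shows that $\Ext^p(G,\Ok) \to \Ext^p(G_{(p)},\Ok)$ has kernel and cokernel in codimension $\geq p+1$, which yields the required isomorphism on $(-)^{(p)}$ quotients.

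The pairing \eqref{eq:general-duality-pairing} is then defined via either \eqref{eq:pairing-grothendieck} or \eqref{eq:pairing-aw}; inspection of these formulas shows that they reduce to \eqref{eq:local-duality-pairing} when $\codim G \geq p$ (since in that case $G_{(p)} = G$). Functoriality in $G$ follows from the naturality of $G \mapsto G_{(p)}$ combined with the functoriality statement in Theorem~\ref{thm:local-duality}. The main obstacle is descent to \eqref{eq:general-duality-pairing-descended}: descent in the first argument is immediate from Theorem~\ref{thm:local-duality} applied to $G_{(p)}$, but descent in the second argument requires that $\Ext^p(G,\Ok)_{(p+1)}$ pair trivially with $G_{(p)}$. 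Here the explicit form of the pairing is essential---the Coleff--Herrera and Andersson--Wulcan residue currents underlying \eqref{eq:pairing-aw} are supported in codimension exactly $p$ and therefore annihilate the codimension $\geq p+1$ part; equivalently, the pairing factors through $H^p_W(\Ok) = 0$ with $W = \supp\varphi$ of codimension $\geq p+1$. Once descent is established, non-degeneracy in the first argument is a direct consequence of Theorem~\ref{thm:local-duality} for $G_{(p)}$, and non-degeneracy in the second argument follows by lifting through the key identification and again invoking Theorem~\ref{thm:local-duality}.

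Finally, the isomorphism characterizations \eqref{eq:general-duality-surjection1} and \eqref{eq:general-duality-surjection2} are obtained along the lines of Theorem~\ref{thm:local-duality-surjective}, via the $S_2$-characterization of reflexivity (as in \cite{ST}*{Corollary~1.21}), applied respectively to $G^{(p)}$ and $\Ext^p(G,\Ok)^{(p)}$. The symmetry between the two conditions reflects the fact that, after passing to the $(-)^{(p)}$ quotients on both sides, the two arguments of the pairing play interchangeable roles.
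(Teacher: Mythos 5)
There is a genuine gap in the key technical step. You claim the identification $\Ext^p(G,\Ok)^{(p)} \cong \Ext^p(G_{(p)},\Ok)^{(p)}$, and your codimension estimates for $\Ext^q(G/G_{(p)},\Ok)$, $q\geq p$, are correct. But the inference from ``the map $\Ext^p(G,\Ok)\to\Ext^p(G_{(p)},\Ok)$ has kernel and cokernel in codimension $\geq p+1$'' to ``the induced map on $(-)^{(p)}$ quotients is an isomorphism'' is false. Since $\Ext^p(G_{(p)},\Ok)$ has \emph{pure} codimension $p$ (Proposition~\ref{prop:ext-lc-pure}), its $(-)_{(p+1)}$ submodule vanishes, so $\Ext^p(G_{(p)},\Ok)^{(p)} = \Ext^p(G_{(p)},\Ok)$. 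Thus surjectivity of the induced map on $(-)^{(p)}$ would require the original map $\Ext^p(G,\Ok)\to\Ext^p(G_{(p)},\Ok)$ to be surjective, which fails whenever the connecting map to $\Ext^{p+1}(G/G_{(p)},\Ok)$ is nonzero. ``Cokernel of high codimension'' is simply not enough when the target has no $(p+1)$-codimensional torsion to absorb it.

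This is exactly what the paper's Lemma~\ref{lma:ses-extp} records: one gets only a short exact sequence
\begin{equation*}
0 \to \Ext^p(G,\Ok)^{(p)} \to \Ext^p(G_{(p)},\Ok) \to H \to 0
\end{equation*}
with $H$ of codimension $\geq p+2$ but possibly nonzero. The paper then combines this with Corollary~\ref{cor:S2-isom} and Corollary~\ref{cor:extisS2} (the latter giving that $\Ext^p(G_{(p)},\Ok)$ is always $S_2$) to conclude that the injection is an isomorphism \emph{precisely when} $\Ext^p(G,\Ok)^{(p)}$ is $S_2$ --- and this is where the $S_2$-condition on $\Ext^p(G,\Ok)^{(p)}$ in the statement of Theorem~\ref{thm:local-duality-general} actually comes from. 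Indeed, if your claimed isomorphism held unconditionally, then $\Ext^p(G,\Ok)^{(p)}$ would always be $S_2$, the characterization of \eqref{eq:general-duality-surjection2} would become ``always an isomorphism,'' and the theorem's ``if and only if'' would be vacuous; the careful formulation in the paper is strong evidence that this does not happen. The rest of your reduction strategy (passing to $G_{(p)}$, invoking Theorems~\ref{thm:local-duality} and \ref{thm:local-duality-surjective}, and the descent argument using that $H^p_Z(\Ok)$ has pure codimension $p$) matches the paper's approach, but the argument as written cannot reach the stated $S_2$-dichotomy for \eqref{eq:general-duality-surjection2} and needs to be replaced by the more careful bookkeeping of the s.e.s.\ plus Corollary~\ref{cor:S2-isom}.
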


We note two things which are implied from the formulation of these theorems.
First of all, in \eqref{eq:local-duality-pairing-descended}, we have $\Ext^p(G,\Ok)$ in the second argument,
while in \eqref{eq:general-duality-pairing-descended}, we have $\Ext^p(G,\Ok)^{(p)}$ in the second argument.
Both these pairings are non-degenerate, and in the case when $G$ has codimension $\geq p$, these should coincide,
i.e., $\Ext^p(G,\Ok)$ must have pure codimension $p$ (or be $0$). This is indeed well-known, cf., Proposition~\ref{prop:ext-lc-pure}
below.
Secondly, if $G$ is still assumed to be of codimension $\geq p$, then \eqref{eq:local-duality-surjection2} and
\eqref{eq:general-duality-surjection2} coincide, and the first morphism is always an isomorphism, while the
second is an isomorphism if and only if $\Ext^p(G,\Ok)^{(p)} = \Ext^p(G,\Ok)$ is $S_2$. Thus, $\Ext^p(G,\Ok)$ is always $S_2$
when $G$ has codimension $\geq p$, see Corollary~\ref{cor:extisS2}.

We now consider a consequence of Theorem~\ref{thm:local-duality-general} and Theorem~\ref{thm:local-duality-surjective},
which can be seen as another generalization of \eqref{eq:codim0induced}.
We consider an arbitrary finitely generated $\Ok$-module $G$. Then, we have the injection \eqref{eq:general-duality-surjection1},
which is an isomorphism if and only if $G^{(p)}$ is $S_2$. In addition, $\Ext^p(G,\Ok)$ has codimension $\geq p$, see
Proposition~\ref{prop:ext-lc-pure}, so using \eqref{eq:local-duality-surjection2} (with $G$ replaced by $\Ext^p(G,\Ok)$),
we obtain the following corollary.

\begin{cor} \label{cor:roos}
    Let $G$ be a finitely generated $\Ok$-module. Then there is a natural injective map
    \begin{equation} \label{eq:roos}
        G^{(p)} \to \Ext^p(\Ext^p(G,\Ok),\Ok),
    \end{equation}
    which is surjective if and only if $G^{(p)}$ is $S_2$.
\end{cor}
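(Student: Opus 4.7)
The plan is to obtain~\eqref{eq:roos} by composing the injection~\eqref{eq:general-duality-surjection1} from Theorem~\ref{thm:local-duality-general}, applied to $G$, with the inverse of the isomorphism~\eqref{eq:local-duality-surjection2} from Theorem~\ref{thm:local-duality-surjective}, applied to $F := \Ext^p(G,\Ok)$. The criterion for surjectivity will then be inherited directly from the one already built into Theorem~\ref{thm:local-duality-general}, so no separate argument for it is needed.

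First I choose a germ $Z$ of pure codimension $p$ with $Z \supseteq \supp G$. Applying Theorem~\ref{thm:local-duality-general} to $G$ yields a natural injection
\[
    G^{(p)} \hookrightarrow \Hom(\Ext^p(G,\Ok)^{(p)}, H^p_Z(\Ok)),
\]
which is an isomorphism if and only if $G^{(p)}$ is $S_2$. By Proposition~\ref{prop:ext-lc-pure} the module $F := \Ext^p(G,\Ok)$ has codimension $\geq p$, so $F_{(p)} = F$ and $F^{(p)} = F/F_{(p+1)}$; moreover $\supp F \subseteq \supp G \subseteq Z$, so the same $Z$ may be reused for $F$. Now I apply Theorem~\ref{thm:local-duality-surjective} to $F$: since $F$ has codimension $\geq p$, the (always valid) isomorphism~\eqref{eq:local-duality-surjection2} gives
\[
    \Ext^p(F,\Ok) \cong \Hom(F/F_{(p+1)}, H^p_Z(\Ok)) = \Hom(\Ext^p(G,\Ok)^{(p)}, H^p_Z(\Ok)).
\]
Composing the injection of the first step with the inverse of this isomorphism yields the natural injective map $G^{(p)} \to \Ext^p(\Ext^p(G,\Ok),\Ok)$ of \eqref{eq:roos}, which by the criterion of the first step is surjective if and only if $G^{(p)}$ is $S_2$.

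Since both ingredients are already proven elsewhere in the paper, the corollary is essentially a formal composition and there is no substantial obstacle. The only point requiring explicit attention is the naturality of the composite, i.e.\ that it is canonical in $G$ and independent of the auxiliary choice of $Z$; this follows from the functoriality statements attached to Theorems~\ref{thm:local-duality} and~\ref{thm:local-duality-general} together with Lemma~\ref{lma:HpZW}.
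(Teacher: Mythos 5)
Your proposal is correct and follows essentially the same route the paper sketches in the paragraph preceding the corollary: use the injection~\eqref{eq:general-duality-surjection1} for $G$ (isomorphism iff $G^{(p)}$ is $S_2$), note that $\Ext^p(G,\Ok)$ has codimension $\geq p$, and then compose with the inverse of the always-valid isomorphism~\eqref{eq:local-duality-surjection2} applied to $\Ext^p(G,\Ok)$. The only minor remark is that the claim ``$\Ext^p(G,\Ok)$ has codimension $\geq p$'' for arbitrary $G$ is really Proposition~\ref{prop:ext-ass-primes} rather than Proposition~\ref{prop:ext-lc-pure} (which assumes $\codim G = p$), but the paper itself makes the same attribution, so this is not a discrepancy you introduced.
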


This result is similar to the fundamental theorem of Roos, \cite{Roos}, for which the results and proof has
been elaborated by Björk, in for example \cite{BjRings}*{Chapter~2}. Part of the theorem of Roos states that
there exists an injective morphism \eqref{eq:roos}, and that the morphism is surjective outside a set of codimension $\geq p+2$.
The surjectiveness outside a set of codimension $\geq p+2$ is in fact also enough to prove that it is an isomorphism
if and only if it is $S_2$. Note that by Proposition~\ref{prop:ext-lc-pure} and Corollary~\ref{cor:extisS2}, $\Ext^p(\Ext^p(G,\Ok),\Ok)$ is $S_2$, and
by Corollary~\ref{cor:S2-isom}, the morphism is then surjective if and only if $G^{(p)}$ is $S_2$.
This result thus shows that $\Ext^p(\Ext^p(G,\Ok),\Ok)$ is the $S_2$-ification of $G^{(p)}$ in the sense that
it is a finite $\Ok$-module containing $G^{(p)}$ and which coincides with $G^{(p)}$ where $G^{(p)}$ is $S_2$.

In \cite{AndNoeth}, it is explained that a version from \cite{AndNoeth} of Theorem~\ref{thm:local-duality}
when $G = G^{(p)}$ has pure codimension $p$, follows from Corollary~\ref{cor:roos},
and also that the form of Corollary~\ref{cor:roos} from \cite{Roos} can be used to prove the version
of Theorem~\ref{thm:local-duality} from \cite{AndNoeth}.

\bigskip

We will now relate our results with a result from the theory of linkage. We first recall
that if $I$ and $J$ are two ideals in a ring $R$, then $I : J$ is the ideal
\begin{equation*}
    I : J = \{ r \in R \mid rJ \subseteq I \}.
\end{equation*}

Let $J \subseteq \Ok$ be an ideal of codimension $p$. The equidimensional hull $J_{[p]}$
of $J$ is the intersection of all primary components of $J$ of codimension $p$, cf., Remark~\ref{rem:equihull}.
The following result can be found in (the proof of) \cite{Vasc}*{Proposition~3.41}.
If $f = (f_1,\dots,f_p)$ is a regular sequence in $J$, generating the ideal $I$, then
\begin{equation} \label{eq:link-ideals}
    J_{[p]} = I : (I : J).
\end{equation}
This is a generalization of the case when $\Ok/J$ is Cohen-Macaulay from the
fundamental article \cite{PS} by Peskine and Szpiro.

If $G$ is a finitely generated $\Ok$-module, then $G$ is isomorphic to a quotient
module $G \cong \Ok^r/J$, for some submodule $J \subseteq \Ok^r$. Then, just as for an
ideal $J \subseteq \Ok$, one can consider the module
\begin{equation*}
    I : J = \{ g \in \Hom(\Ok^r,\Ok) \mid g(J) \subseteq I \},
\end{equation*}
which thus coincides with the usual colon ideal if $J \subseteq \Ok$. In addition, if $J \subseteq \Ok^r$
is a submodule such that $\Ok^r/J$ has codimension $p$, we let $J_{[p]}$ be the intersection of
all primary components of $J$ of codimension $p$.
We then obtain the following generalization of \eqref{eq:link-ideals}.

\begin{thm} \label{thm:colon-modules}
    Let $G$ be a finitely generated $\Ok$-module of codimension $p$,
    and write $G \cong \Ok^r/J$.
    If $f = (f_1,\dots,f_p)$ is a regular $J$-sequence, generating the ideal
    $I$, then
    \begin{equation} \label{eq:link-modules}
        J_{[p]} = I : (I : J).
    \end{equation}
\end{thm}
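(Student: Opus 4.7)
The plan is to exploit that the regular sequence $f$ annihilates $G$, so that $G$ carries a natural structure of module over the Gorenstein (indeed complete intersection) quotient $R := \Ok/I$, and then to reduce the identity $J_{[p]} = I:(I:J)$ to the classical codimension-zero biduality \eqref{eq:codim0pairing} applied over $R$, together with an explicit formula for $\Ext^p_\Ok(G,\Ok)$.

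The first step is the standard change-of-rings identification. Since $K_\bullet(f)$ is a free resolution of $\Ok/I$ with $\Ext^j_\Ok(\Ok/I,\Ok) = 0$ for $j \neq p$ and $\Ext^p_\Ok(\Ok/I,\Ok) \cong R$, a standard argument (for example via the spectral sequence $\Ext^i_R(M,\Ext^j_\Ok(\Ok/I,\Ok)) \Rightarrow \Ext^{i+j}_\Ok(M,\Ok)$) yields a canonical isomorphism $\Ext^p_\Ok(M,\Ok) \cong \Hom_R(M,R)$ for every $R$-module $M$. Writing $G = R^r/\bar J$ with $\bar J := J/I^r$, a direct computation gives
$$\Ext^p_\Ok(G,\Ok) \;\cong\; \Hom_R(G,R) \;\cong\; (I:J)/I^r,$$
the second isomorphism sending the class of $(h_1,\dots,h_r) \in (I:J)$ to the $R$-linear form $(\bar g_1,\dots,\bar g_r) \mapsto \sum_i h_i g_i \bmod I$.

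Applying the same recipe to $\Ext^p_\Ok(G,\Ok)$ and precomposing with $\Ok^r \twoheadrightarrow G$ produces a natural map
$$\psi \colon \Ok^r \longrightarrow \Ext^p_\Ok\bigl(\Ext^p_\Ok(G,\Ok),\Ok\bigr) \;\cong\; \Hom_R\bigl((I:J)/I^r,\,R\bigr),$$
whose explicit formula reads $(g_1,\dots,g_r) \mapsto \bigl[(h_1,\dots,h_r) \mapsto \sum_i g_i h_i \bmod I\bigr]$. From this formula $\ker\psi = I:(I:J)$ by definition. On the other hand, $\psi$ coincides with the composition of the projection $\Ok^r \to G$ with the canonical biduality map for $G$ viewed as an $R$-module, namely $G \to \Hom_R(\Hom_R(G,R),R)$. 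Classical biduality over the Noetherian ring $R$ (equivalently, Corollary~\ref{cor:roos} applied to $G$) tells us that the kernel of this biduality map is the $R$-torsion submodule $G_{(1),R}$. Since $\codim_R(P/I) = \codim_\Ok P - p$ for primes $P \supseteq I$, one has $G_{(1),R} = G_{(p+1)}$, and the usual description of the equidimensional hull via primary decomposition gives $G/G_{(p+1)} = \Ok^r/J_{[p]}$. Hence $\ker\psi = J_{[p]}$, and comparing the two descriptions of $\ker\psi$ yields $J_{[p]} = I:(I:J)$.

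The delicate point is the compatibility check that the explicit pairing formula for $\psi$ really does agree with the composition of $\Ok^r \to G$ and the canonical biduality map under the change-of-rings isomorphism from the first step. Once one fixes the dual-basis identification $\Hom_\Ok(\Ok^r,\Ok) \cong \Ok^r$ and traces through the naturality of $\Ext^p_\Ok(-,\Ok) \cong \Hom_R(-,R)$, this is formal, but it is the one step in the argument where bookkeeping can easily go astray.
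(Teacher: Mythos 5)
Your proof is correct in substance and takes a genuinely different route from the paper's. The paper establishes the nontrivial inclusion $I:(I:J)\subseteq J_{[p]}$ by running through its residue-pairing machinery: it builds a comparison morphism $a$ from the Koszul complex of $f$ into a free resolution of $G$, invokes Lemma~\ref{lma:colon} to identify $I:(L_p\otimes J)$ with $I K_p^* + (\ker\varphi_{p+1}^*)a_p$, and then uses functoriality and non-degeneracy of the pairing \eqref{eq:local-duality-pairing-descended}; the easy inclusion is handled by a separate elementary computation. Your argument avoids the comparison morphism and Lemma~\ref{lma:colon} entirely by passing to the Gorenstein quotient $R=\Ok/I$ and using the Rees/change-of-rings isomorphism $\Ext^p_\Ok(M,\Ok)\cong\Hom_R(M,R)$ to identify $\Ext^p_\Ok(G,\Ok)$ with $(I:J)/I^r$; both inclusions then fall out simultaneously as two descriptions of $\ker\psi$. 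This is cleaner and more purely algebraic; what the paper's route buys is that the same comparison-formula framework is exercised consistently throughout the article.

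One small caution: where you write ``classical biduality over the Noetherian ring $R$,'' the assertion that $\ker\bigl(G\to\Hom_R(\Hom_R(G,R),R)\bigr)$ equals $\{g:\codim_R\supp g\ge 1\}$ is false for a general Noetherian ring. It holds here because $R$ is Gorenstein, so $R$ has no embedded primes and $R_P$ is Artinian and self-injective at each minimal prime $P$, making the localized pairing perfect. Stated for Gorenstein $R$, the claim is standard and self-contained, so the parenthetical appeal to Corollary~\ref{cor:roos} is dispensable --- which is preferable, since if you instead went through Corollary~\ref{cor:roos}, the ``compatibility check'' you flag at the end would become a genuine load-bearing step of the argument rather than a side remark.
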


We note that $I : (I : J)$ is a submodule of $\Hom(\Hom(\Ok^r,\Ok),\Ok)$,
and that we have a natural isomorphism
\begin{equation} \label{eq:homhom}
    g \in \Ok^r \stackrel{\cong}{\mapsto}  (\varphi \mapsto \varphi(g)) \in \Hom(\Hom(\Ok^r,\Ok),\Ok).
\end{equation}
The module $J_{[p]}$ is a submodule of $\Ok^r$, and the equality \eqref{eq:link-modules} is
to be understood using this isomorphism \eqref{eq:homhom}.

We obtain Theorem~\ref{thm:colon-modules} as a rather easy consequence of non-degeneracy
in the first argument in the pairing in Theorem~\ref{thm:local-duality}. In fact, one
can also go the other way, and \eqref{eq:link-ideals} or \eqref{eq:link-modules} implies
non-degeneracy in the first argument in Theorem~\ref{thm:local-duality} in the corresponding
cases, cf., Lemma~\ref{lma:reduction-first}.

\subsection{A cohomological residue pairing} \label{ssect:intro-coh}

We now turn to a somewhat different formulation of the duality theorem.
If $G = \Ok/I$, where $I$ is a complete intersection ideal of codimension $n$, then
in \cite{GH}*{p. 693}, a pairing
\begin{equation} \label{eq:local-duality-pairing-residue-n}
    \Ok/I \times \Ext^n(\Ok/I,\Omega^n) \to \C
\end{equation}
is defined, which is non-degenerate in both arguments. Here, $\Omega^n$ is the module of germs of holomorphic $n$-forms near $0\in \C^n$.  We first recall how this is defined.
First of all, we represent $\Ext^n(\Ok/I,\Omega^n)$ as $H^n(\Hom(K_\bullet,\Omega^n))$,
where $(K,\psi) = (\Lambda \Ok^n,\delta_f)$ is the Koszul complex of $f$, and an element
$\omega \in H^n(\Hom(K_\bullet,\Omega^n))$ can thus be represented as
$[\omega_0 (e_1 \wedge \dots \wedge e_n)^*]$, where $\omega_0$ is a holomorphic $(n,0)$-form,
see Section~\ref{sect:ci} for notation.
The pairing \eqref{eq:local-duality-pairing-residue-n} is then defined by
\begin{equation} \label{eq:artinian-ci-explicit}
    (g, \omega) \mapsto \frac{1}{(2\pi i)^n} \int_{\{ \cap_{i=1}^n |f_i|=\epsilon \}} \frac{g \omega_0}{f_1 \dots f_n},
\end{equation}
where $\epsilon$ is chosen small enough such that $f_1,\dots,f_n$ and $\omega_0$ are holomorphic on
$D_\epsilon := \{ \cap_{i=1}^n |f_i|\leq\epsilon \}$ and $(f_1,\dots,f_n)$ has an isolated common zero at $\{ 0 \}$ in
$D_\epsilon$, cf., \cite{GH}*{Chapter~5.1}. This pairing is canonical, and does not
depend on the choice of generators $f$ of $I$.

Passare constructed in \cite{PMScand} a generalization of \eqref{eq:artinian-ci-explicit},
to the case of complete intersection ideals of arbitrary codimension, although the viewpoint
of it as a canonical pairing similar to \eqref{eq:local-duality-pairing-residue-n} was not elaborated.
In order to describe this construction, we let $Z \subseteq (\C^n,0)$ be a subvariety of pure
codimension $p$, and we then define $H^{p,q}_{Z^c}$ to be the module of germs at $0$ of smooth
$(p,q)$-forms with compact support, such that they are $\dbar$-closed in a neighbourhood of $Z$.

By \cite{GH}*{p. 651--655} and Stokes' theorem, there exists a $(0,n-1)$-form $B_f$
(being essentially like the pullback of the Bochner-Martinelli kernel by $f$), such that if $\chi$ is
a cut-off function which is $\equiv 1$ in a neighbourhood of $\|f\|=\epsilon$, then the right-hand side of
\eqref{eq:artinian-ci-explicit} equals
\begin{equation*}
    \int g \omega_0 \wedge B_f \wedge \dbar \chi.
\end{equation*}
More generally, one then gets a pairing
\begin{equation*}
    \Ok/I \times \Ext^n(\Ok/I,\Ok) \to \Hom_\C(H^{n,0}_{\{0\}^c},\C),
\end{equation*}
which if $\beta \in H^{n,0}_{\{0\}^c}$ and $\xi = [\xi_0 e^*] \in \Ext^n(\Ok/I,\Ok)$,
then the pairing is given by
\begin{equation} \label{eq:pairing-pa}
    \la g, \xi \ra_{GH}(\beta) := \int g \xi_0 B_f \wedge \dbar \beta,
\end{equation}
and by taking $\beta = \chi dz_1 \wedge \dots \wedge dz_n$, one obtains that the
pairing is non-degenerate.

In \cite{PMScand}, Passare then showed that for $I = J(f_1,...,f_p)$ a complete intersection
ideal of codimension $p \leq n$,
one can define a $(0,p-1)$-form $B_f$ similar to above, such that if $g \in \Ok$, then
\begin{equation} \label{eq:passare-duality}
    g \in I \text{ if and only if } \int g  B_f \wedge \dbar \beta = 0 \text{ for all $\beta \in H^{n,n-p}_{Z(I)^c}$}.
\end{equation}
With the help of \eqref{eq:passare-duality}, and using a similar construction as in \cite{GH}, one then obtains a non-degenerate
pairing
\begin{equation} \label{eq:passare-pairing}
    \Ok/I \times \Ext^p(\Ok/I,\Ok) \to \Hom_\C(H^{n,n-p}_{Z(I)^c},\C).
\end{equation}
Inspired by the construction of residue currents of Andersson and Wulcan, Lundqvist generalized in \cite{Lund2}
\eqref{eq:passare-duality} to pure dimension ideals, and one can also use this construction to obtain a
pairing \eqref{eq:passare-pairing}, which is described in Section~\ref{ssect:lund-def}.

We now compare these pairings with the pairing from Theorem~\ref{thm:local-duality} and Theorem~\ref{thm:local-duality-general}.
A local cohomology class $T \in H^p_Z(\Ok)$ can be represented by a $\dbar$-closed $(0,p)$-current  $\mu$ with support on $Z$,
modulo $\dbar$ of $(0,p-1)$-currents with support on $Z$, see \eqref{eq:lc-current-repr}. One then obtains a map
\begin{equation} \label{eq:local-cohomology-dual}
    R : H^p_Z(\Ok) \to \Hom_\C(H^{n,n-p}_{Z^c},\C),
\end{equation}
where $R(T) \in \Hom_\C(H^{n,n-p}_{Z^c},\C)$ is given as $R(T)(\gamma) = \int \mu \wedge \gamma$.
This exists since $\mu \wedge \gamma$ is a $(n,n)$-current with compact support,
and by the fact that $\gamma$ is $\dbar$-closed near $Z$, it follows easily that $R(T)$ so
defined is independent of the choice of representative $\mu$ of $T$.

\begin{thm} \label{thm:cohomological-duality}
    Let $G$ be a finitely generated $\Ok$-module, and let $Z \supseteq \supp G_{(p)}$, where $Z$ has pure codimension $p$.
    There exists a non-degenerate canonical explicitly given pairing
    \begin{equation} \label{eq:cohomological-duality-pairing}
        G^{(p)} \times \Ext^p(G,\Ok)^{(p)} \to \Hom_\C(H^{n,n-p}_{Z^c},\C),
    \end{equation}
    which is functorial in $G$.
    The pairing \eqref{eq:cohomological-duality-pairing} can be defined by composing the pairing
    \eqref{eq:general-duality-pairing-descended} with \eqref{eq:local-cohomology-dual}, and if
    $G$ has codimension $\geq p$, then this pairing coincides with \eqref{eq:pairing-lu}.
\end{thm}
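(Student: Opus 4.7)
The plan is to \emph{define} the pairing \eqref{eq:cohomological-duality-pairing} as the composition of the non-degenerate pairing \eqref{eq:general-duality-pairing-descended} from Theorem~\ref{thm:local-duality-general} with the map $R$ from \eqref{eq:local-cohomology-dual}, i.e.,
\begin{equation*}
    (g,\xi) \longmapsto \bigl(\gamma \mapsto R(\la g,\xi\ra)(\gamma)\bigr),
\end{equation*}
where $\la g,\xi\ra \in H^p_Z(\Ok)$ is the value of \eqref{eq:general-duality-pairing-descended}. Since Theorem~\ref{thm:local-duality-general} already supplies a canonical explicit formula for $\la\cdot,\cdot\ra$ via either \eqref{eq:pairing-grothendieck} or \eqref{eq:pairing-aw} and records its functoriality in $G$, and since $R$ does not involve $G$ at all, both the explicitness and the functoriality of the composition are inherited at once.

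For non-degeneracy I would factor the argument. By Theorem~\ref{thm:local-duality-general} the pairing \eqref{eq:general-duality-pairing-descended} is non-degenerate in both arguments, so it suffices to show that $R$ is injective. A class $T\in H^p_Z(\Ok)$ is represented by a $\dbar$-closed $(0,p)$-current $\mu$ with support in $Z$ as in \eqref{eq:lc-current-repr}, and $R(T)=0$ says that $\int \mu\wedge\gamma=0$ for every $\gamma\in H^{n,n-p}_{Z^c}$. The required conclusion, that $\mu=\dbar\nu$ for some $(0,p-1)$-current $\nu$ supported in $Z$, is a local version of Serre duality identifying $H^p_Z(\Ok)$ with the topological dual of $H^{n,n-p}_{Z^c}$. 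This injectivity is the step I expect to require the most care; a natural route is through the fine resolution of $\Ok$ by $(0,\bullet)$-currents together with standard cohomological duality for sheaves with supports.

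Finally, for the identification with \eqref{eq:pairing-lu} when $\codim G\geq p$, I would unwind both descriptions. The Passare/Lundqvist pairing sends $(g,\xi=[\xi_0 e^*])$ to the functional $\beta \mapsto \int g\xi_0\, B_f\wedge\dbar\beta$, with $B_f$ a Bochner--Martinelli-type potential. On the other side, $\la g,\xi\ra$ is represented in $H^p_Z(\Ok)$ by a residue current of Coleff--Herrera or Andersson--Wulcan type applied to $g\xi_0$, and $R$ then integrates this current against $\dbar\beta$ after multiplication by a cut-off near $Z$. The identification reduces to the known relation between $\dbar B_f$ and the corresponding residue current, which is precisely how the latter arise from their principal-value potentials; once this compatibility is in place, matching the two expressions is a direct cut-off computation, using also that $R(T)$ does not depend on the chosen representative $\mu$.
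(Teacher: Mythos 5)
Your overall strategy matches the paper's: define the pairing as the composition of \eqref{eq:general-duality-pairing-descended} with $R$ from \eqref{eq:local-cohomology-dual}, inherit explicitness and functoriality from Theorem~\ref{thm:local-duality-general} since $R$ is $G$-independent, reduce non-degeneracy to injectivity of $R$, and match the Lundqvist pairing by peeling the residue current back to its Bochner--Martinelli potential. The last step, in the paper, is done in the generality of the Andersson--Wulcan setup: using \eqref{eq:pairing-aw} as the representative of \eqref{eq:general-duality-pairing-descended}, plugging in $R^E_p=\varphi_{p+1}U_{p+1}-\dbar U_p$ (equation~\eqref{eq:RU}), killing the $\varphi_{p+1}U_{p+1}$ term by $\xi_0\varphi_{p+1}=0$, and finishing with Stokes and $U_p=u_p$ on $\supp\dbar\beta$. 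Your sketch describes the complete intersection case (the notation $\xi_0 e^*$, $B_f$), but the translation to the general AW language is routine, so this part is fine in spirit.

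The genuine gap is the injectivity of $R$, which the paper isolates as Lemma~\ref{lma:lc-injective}. You flag it as the delicate step and suggest deducing it from ``standard cohomological duality for sheaves with supports'' via the fine current resolution of $\Ok$. This is unlikely to go through cleanly as stated, for two reasons. First, $H^p_Z(\Ok)$ here is not the stalk of the usual local cohomology sheaf but the moderate cohomology module $\varinjlim_t\Ext^p(\Ok/J^t,\Ok)$ (see Remark~\ref{rem:moderate-cohomology}), and the pairing partner $H^{n,n-p}_{Z^c}$ is an ad hoc space of germs of compactly supported forms $\dbar$-closed near $Z$; neither sits inside a standard sheaf-theoretic duality framework, and there are unresolved topological issues since the modules are infinite-dimensional over $\C$. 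Second, you formulate the goal as $\mu=\dbar\nu$ with $\nu$ supported in $Z$, which would require such a duality to be exact at the level of complexes. The paper sidesteps both difficulties by choosing the \emph{unique} Coleff--Herrera representative $\mu\in CH_Z$ (via \eqref{eq:ch-loc-coh}), so that injectivity reduces to $\mu=0$ rather than $\mu$ exact; then by the SEP the problem localizes to $Z_{\reg}$, where $\mu=\sum a_\alpha(z)\,\dbar(1/w^\alpha)$ in adapted coordinates, and the coefficients $a_\alpha(z)$ are extracted by pairing with explicit test forms $\beta^\alpha_z$ built from a cut-off and the Bochner--Martinelli kernel. This is a concrete, elementary argument that you would need to supply (or an equivalent replacement) to complete your proof.
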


Since the pairing
in \eqref{eq:cohomological-duality-pairing} can be defined in terms of the pairing
\eqref{eq:general-duality-pairing-descended}, non-degeneracy in Theorem~\ref{thm:cohomological-duality}
implies non-degeneracy in Theorem~\ref{thm:local-duality-general}.
The pairing \eqref{eq:pairing-lu} is the pairing defined by Lundqvist. By the main result of \cite{Lund2},
if $G = \Ok/J$ has pure codimension $p$, then the pairing \eqref{eq:pairing-lu} is non-degenerate in the
first argument, which thus implies the non-degeneracy in the first argument
in Theorem~\ref{thm:local-duality} in this case. In fact, one can also go the other way around,
and non-degeneracy in Theorem~\ref{thm:local-duality-general} implies non-degeneracy also
in Theorem~\ref{thm:cohomological-duality}, see Lemma~\ref{lma:lc-injective}.

We also mention that a variant of such residues was considered also by Lejeune-Jalabert,
see \cite{LJ1} and \cite{LJ2}, although the main purpose of these articles was to obtain
explicit representations of the fundamental cycle of Cohen-Macaulay ideals.
Especially in the Artinian case, she obtained explicit expressions for such residues as integrals,
which we by functoriality of the pairing of Lundqvist (Proposition~\ref{prop:lu-functorial}),
can see that these definitions indeed coincide, see Example~\ref{ex:lj}.
In \cite{LJ1} and \cite{LJ2}, non-degeneracy of the pairing was not considered.

\subsection{Structure of the proofs of Theorem~\ref{thm:local-duality}, \ref{thm:local-duality-surjective} and \ref{thm:local-duality-general}}

The description of the pairings in Theorem~\ref{thm:local-duality} and \ref{thm:local-duality-general},
and the proofs of these theorems and Theorem~\ref{thm:local-duality-surjective} occupy the majority of the article,
and is divided into several parts. In addition, for some of the statements, we give several different proofs,
so we briefly outline here the disposition of these proofs.

First of all, we give two different ways of defining the pairings \eqref{eq:local-duality-pairing}
and \eqref{eq:general-duality-pairing}, which are given either
algebraically by \eqref{eq:pairing-grothendieck}, or analytically by \eqref{eq:pairing-aw}.
That these pairings are both functorial is proven in Lemma~\ref{lma:functorial-grothendieck}
and Lemma~\ref{lma:functorial-aw}. That these pairings then descend to the pairings
\eqref{eq:local-duality-pairing-descended} and \eqref{eq:general-duality-pairing-descended} then
follows by a general result about pairings of this form, Corollary~\ref{cor:descends}.
That these two pairings coincide is proven in Proposition~\ref{prop:aw-gr-coincide}.

For the pairing \eqref{eq:pairing-grothendieck}, the remaining parts about non-degeneracy, 
in Theorem~\ref{thm:local-duality} as well as the statements about surjectivity of the induced morphisms
in Theorem~\ref{thm:local-duality-surjective} follow from Lemma~\ref{lma:reduction-to-ext-pure} and
Proposition~\ref{prop:pure-nondeg-surjective}.
Finally, the remaining parts of Theorem~\ref{thm:local-duality-general} are proven in
Proposition~\ref{prop:reduction-codim-p}.

In addition, for the pairing \eqref{eq:pairing-aw}, we prove non-degeneracy in Proposition~\ref{prop:nondegenerate-aw}.
We also give an alternative proof of non-degeneracy of the pairing \eqref{eq:pairing-grothendieck} for $G = \Ok/J$,
where $J$ has codimension $\geq p$ in Section~\ref{ssect:ci-reduction}, see Lemma~\ref{lma:reduction-second}
and Lemma~\ref{lma:reduction-first}.

\section*{Acknowledgements}

I would like to thank Mats Andersson, H{\aa}kan Samuelsson and Elizabeth Wulcan
for valuable discussions in the preparation of this article,
Gerhard Pfister for pointing me to a reference for a result I needed,
and anonymous referees of the present and an earlier version of the article for useful suggestions
to improve the article.

\section{The Grothendieck pairing}

We begin by recalling the definition of local cohomology, and the statement of the local
duality theorem from \cite{HLocal}, in order to compare with our theorems, and as well describe
the first of the definitions of the pairings \eqref{eq:local-duality-pairing}
and \eqref{eq:general-duality-pairing}.

\subsection{Local cohomology and the Grothendieck pairing} \label{ssect:grothendieck-preliminaries}

If $R$ is a Noetherian commutative ring, $J \subseteq R$ an ideal, and $G$ an $R$-module,
we define the \emph{$j$-th local cohomology module} $H^j_J(G)$ of $G$ with support in $J$ as
\begin{equation*}
    H^j_J(G) := \varinjlim_t \Ext^j(\Ok/J^t,G).
\end{equation*}
A rather extensive treatment of local cohomology and applications can be found in \cite{24Loc}.
We remark that the local cohomology modules are not the same as the germs of the local cohomology
sheaves, see Remark~\ref{rem:moderate-cohomology}.

As the local cohomology modules $H^k_J(G)$ only depend on the radical of $J$,
see \cite{24Loc}*{Proposition~7.3 and Theorem~7.8},
if $Z \subseteq (\Cn,0)$ is a germ of an analytic subvariety of $\Cn$,
and $J \subseteq \Ok$ is an ideal such that $Z = Z(J)$, then we write the corresponding local cohomology as
\begin{equation} \label{eq:localcohomology}
    H^j_Z(G) := H^j_J(G) = \varinjlim_t \Ext^j(\Ok/J^t,G).
\end{equation}
If $\supp G \subseteq Z$, then for $t \gg 1$, $J^t \subseteq \ann G$, so $H^0_Z(G) \cong G$.
If $Z$ has codimension $p$, and $\supp G_{(p)} \subseteq Z$, then
then $H^0_Z(G) = G_{(p)}$.

Inspired by Serre's duality theorem for smooth projective varieties, Grothendieck
obtained the following local duality theorem, formulated in terms of local cohomology,
\cite{HLocal}*{Theorem~6.3}. To compare this result with Theorem~\ref{thm:local-duality-general}, Theorem~\ref{thm:local-duality} and Theorem~\ref{thm:local-duality-surjective}),
in these theorems we allow local cohomology with support in more general varieties than just $Z = \{ 0 \}$,
but on the other hand, we only consider the ring $R = \Ok$, and only the case $i = 0$.

\begin{thm} \label{thm:grothendieck-duality}
    Let $R$ be a Gorenstein local ring, $G$ a finitely generated $R$-module, and $\mathfrak{m}$ the maximal ideal in $R$.
    Then there is a natural pairing
 \begin{equation} \label{eq:pairing}
    H^i_\mathfrak{m}(G) \times \Ext^{n-i}(G,R) \to H^n_\mathfrak{m}(R),
 \end{equation}
 inducing isomorphisms
 \begin{align}
     \label{eq:pairing-isomorphism} H^i_\mathfrak{m}(G) \cong \Hom(\Ext^{n-i}(G,R),H^n_\mathfrak{m}(R)) \text{ and } \\
    \label{eq:pairing-isomorphism-2} \Ext^{n-i}(G,R)^{\wedge} \cong \Hom(H^i_\mathfrak{m}(G),H^n_\mathfrak{m}(R)),
 \end{align}
 where $^\wedge$ denotes completion with respect to the $\mathfrak{m}$-adic topology.
\end{thm}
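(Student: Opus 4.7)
My plan is to construct the pairing \eqref{eq:pairing} via Yoneda composition, then establish the isomorphisms by reducing to the complete case and invoking Matlis duality together with the Gorenstein identification of $H^n_{\mathfrak{m}}(R)$ as a dualizing module.

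For the construction of the pairing, starting from the definition $H^j_{\mathfrak{m}}(G) = \varinjlim_t \Ext^j(R/\mathfrak{m}^t, G)$, the Yoneda composition product for each $t$ yields a bilinear pairing
\begin{equation*}
\Ext^i(R/\mathfrak{m}^t, G) \times \Ext^{n-i}(G, R) \to \Ext^n(R/\mathfrak{m}^t, R),
\end{equation*}
compatible with the structural maps as $t$ increases; passing to the direct limit gives \eqref{eq:pairing}, which is manifestly natural in $G$.

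To prove the isomorphisms, I would first reduce to the case that $R$ is complete; this is legitimate because both local cohomology and $\Ext$ with finitely generated first argument behave well under $\mathfrak{m}$-adic completion. For complete $R$, Matlis duality $D := \Hom(-, E)$, where $E := E_R(R/\mathfrak{m})$ is the injective hull of the residue field, gives a contravariant exact equivalence between Artinian and finitely generated $R$-modules with $DD \cong \Id$. The essential structural input from the Gorenstein hypothesis is the existence of a minimal injective resolution $0 \to R \to E^0 \to \cdots \to E^n \to 0$ of $R$ of length $n$, with $\Gamma_{\mathfrak{m}}(E^i) = 0$ for $i < n$ and $\Gamma_{\mathfrak{m}}(E^n) \cong E$; consequently $H^i_{\mathfrak{m}}(R) = 0$ for $i < n$ and $H^n_{\mathfrak{m}}(R) \cong E$, so that the target of \eqref{eq:pairing} is the Matlis dualizing module.

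To deduce \eqref{eq:pairing-isomorphism}, I would view $T^i(G) := H^i_{\mathfrak{m}}(G)$ and $S^i(G) := \Hom(\Ext^{n-i}(G, R), H^n_{\mathfrak{m}}(R))$ as covariant cohomological $\delta$-functors of $G$, check that the pairing induces a morphism of $\delta$-functors, verify it is an isomorphism on $G = R$ (both sides vanishing for $i < n$ and canonically identified with $E$ at $i = n$ via $H^n_{\mathfrak{m}}(R) \cong \Hom(R, H^n_{\mathfrak{m}}(R))$), and extend to arbitrary finitely generated $G$ by a standard $\delta$-functor and five-lemma argument using a finite free presentation. The second isomorphism \eqref{eq:pairing-isomorphism-2} then follows by applying $D$ to \eqref{eq:pairing-isomorphism} and using $DD \cong \Id$; the completion appears on the Noetherian side precisely because $D$ sends Artinian $R$-modules to finitely generated $\hat{R}$-modules. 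The main obstacle in the whole argument is the identification $\Gamma_{\mathfrak{m}}(E^n) \cong E$ together with the vanishing $\Gamma_{\mathfrak{m}}(E^i) = 0$ for $i < n$ in a complete Gorenstein ring, which captures the substantive content of the Gorenstein hypothesis; once this is in hand, the remainder is formal manipulation of Yoneda products, derived functors, and Matlis duality.
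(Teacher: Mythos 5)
The paper does not prove Theorem~\ref{thm:grothendieck-duality}: it is quoted directly from Hartshorne's seminar notes (\cite{HLocal}*{Theorem~6.3}) and used as background, so there is no ``paper's proof'' to compare against. Your proposal is a correct reconstruction of the classical argument, and it aligns with the route Grothendieck--Hartshorne take: Yoneda composition for the pairing, reduction to the complete case, the identification $H^i_{\mathfrak{m}}(R) = 0$ for $i < n$ and $H^n_{\mathfrak{m}}(R) \cong E$ coming from the length-$n$ minimal injective resolution of a Gorenstein ring, a $\delta$-functor comparison to get \eqref{eq:pairing-isomorphism}, and Matlis duality to derive \eqref{eq:pairing-isomorphism-2}. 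Two small points worth tightening. First, when you invoke ``a standard $\delta$-functor and five-lemma argument using a finite free presentation,'' what you actually need is a short exact sequence $0 \to K \to F \to G \to 0$ with $F$ free, combined with descending induction on $i$ starting from $i=n$ (where both functors are right-exact and agree on free modules), since the presentation $F_1 \to F_0 \to G \to 0$ alone does not feed directly into the long exact sequences. Second, you should say a word about why the Yoneda pairing commutes with connecting maps, which is what makes $\theta^\bullet$ a morphism of $\delta$-functors rather than just a family of maps. It may also be worth noting that for $R=\Ok$ and $i=0$, the paper's Theorems~\ref{thm:local-duality}--\ref{thm:local-duality-general} recover Theorem~\ref{thm:grothendieck-duality} (see Remark~\ref{rem:grothendieck-general}), but their proofs take a genuinely different path, via associated primes of $\Ext$ (Proposition~\ref{prop:ext-ass-primes}), dualizing complexes in the form of Proposition~\ref{prop:dualizing}, and the explicit double-complex analysis of Proposition~\ref{prop:ssreplacement}, rather than Matlis duality and $\delta$-functor uniqueness.
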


\begin{remark} \label{rem:grothendieck-general}
In case $i=0$ and $p=n$, then $G^{(n)}=G_{(n)} = H^0_\mathfrak{m}(G)$, so when $R=\Ok$, then \eqref{eq:general-duality-pairing-descended} and \eqref{eq:pairing} are pairings of the same modules.
The morphisms \eqref{eq:general-duality-surjection1}, \eqref{eq:general-duality-surjection2}, \eqref{eq:pairing-isomorphism} and \eqref{eq:pairing-isomorphism-2} induced by these pairings a priori seem to have different properties.
First of all, one takes the completion in \eqref{eq:pairing-isomorphism-2}, but since $\Ext^n(G,\Ok)$ is Artinian,
see Proposition~\ref{prop:ext-lc-pure} below, its completion with respect to $\mathfrak{m}$ is the module itself.
Secondly, since $G^{(n)}$ and $\Ext^n(G,\Ok)$ are Artinian, they are Cohen-Macaulay,
and hence \eqref{eq:general-duality-surjection1} and \eqref{eq:general-duality-surjection2} are both isomorphisms,
and then coincide with the isomorphisms \eqref{eq:pairing-isomorphism} and \eqref{eq:pairing-isomorphism-2},
so Theorem~\ref{thm:grothendieck-duality} and Theorem~\ref{thm:local-duality-general} coincide when
$R = \Ok$, $i = 0$ and $p = n$.
\end{remark}

We elaborate here a bit on the definition of the pairing in \eqref{eq:pairing}.
The definition of the pairing \eqref{eq:pairing} as described in \cite{HLocal}*{Chapter~6}
works equally well more generally to give a pairing
\begin{equation} \label{eq:pairing-general}
    H^i_Z(G) \times \Ext^{p-i}(G,\Ok) \to H^p_Z(\Ok).
\end{equation}
The pairing \eqref{eq:pairing-general} is defined in terms of the so-called Yoneda
pairing of $\Ext$, which is a pairing of the form
\begin{equation*}
    \Ext^i(A,B) \times \Ext^j(B,C) \to \Ext^{i+j}(A,C),
\end{equation*}
and which is described in \cite{HLocal}*{Chapter~6.1}.
Then, the pairing \eqref{eq:pairing-general} is defined as follows. First of all, an element
$g \in H^i_Z(G)$ can be represented as an element $g_0 \in \Ext^i(\Ok/J^t,G)$ by \eqref{eq:localcohomology}.
Taking the Yoneda pairing with an element $\xi \in \Ext^{p-i}(G,\Ok)$, we obtain an element
$(g_0,\xi)_Y \in \Ext^p(\Ok/J^t,\Ok)$. The desired element is then obtained by composing with the
map
\begin{equation} \label{eq:pi-t}
    \pi_t : \Ext^p(\Ok/J^t,\Ok) \to \varinjlim_s \Ext^p(\Ok/J^s,\Ok) \cong H^p_Z(\Ok).
\end{equation}
Using the notation from above, the pairing \eqref{eq:pairing-general} is defined
as
\begin{equation} \label{eq:pairing-general-definition}
    \la g, \xi \ra := \pi_t (g_0,\xi)_Y.
\end{equation}

\subsection{The associated primes of $\Ext^p(G,\Ok)$}

In order to obtain the results that the pairings in Theorem~\ref{thm:local-duality}
and Theorem~\ref{thm:local-duality-general} descend, we will use the following
result about the codimension and associated primes  of $\Ext$-groups,
which follows from \cite{EHV}*{Theorem~1.1}.

\begin{prop} \label{prop:ext-ass-primes}
    Let $G$ be a finitely generated $\Ok$-module. Then, $\Ext^p(G,\Ok)$ has codimension $\geq p$, and
    the associated primes of codimension $p$ of $G$ and $\Ext^p(G,\Ok)$ coincide.
\end{prop}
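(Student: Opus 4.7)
The proof splits naturally into two parts. For the codimension bound: if $\mathfrak{p} \subseteq \Ok$ has codimension $k < p$, then $\Ok_\mathfrak{p}$ is a regular local ring of Krull dimension $k$, hence of global dimension $k < p$. By compatibility of $\Ext$ with localization, $\Ext^p(G,\Ok)_\mathfrak{p} \cong \Ext^p_{\Ok_\mathfrak{p}}(G_\mathfrak{p}, \Ok_\mathfrak{p}) = 0$, which immediately gives $\codim \Ext^p(G,\Ok) \geq p$.

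For the statement about associated primes, fix $\mathfrak{p}$ with $\codim \mathfrak{p} = p$ and localize. The plan is to verify the chain of equivalences
\[ \mathfrak{p} \in \Ass G \;\Leftrightarrow\; \depth G_\mathfrak{p} = 0 \;\Leftrightarrow\; \pd G_\mathfrak{p} = p \;\Leftrightarrow\; \Ext^p(G,\Ok)_\mathfrak{p} \neq 0 \;\Leftrightarrow\; \mathfrak{p} \in \Ass \Ext^p(G,\Ok). \]
The first equivalence combines compatibility of $\Ass$ with localization with the standard fact that over a local ring $(R,\mathfrak{m})$, one has $\mathfrak{m} \in \Ass M$ if and only if $\depth M = 0$. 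The second is the Auslander--Buchsbaum formula applied in the regular local ring $\Ok_\mathfrak{p}$ of dimension $p$, and the third uses that $\Ok_\mathfrak{p}$ has global dimension $p$, so $\Ext^p_{\Ok_\mathfrak{p}}$ is nonzero precisely when the projective dimension attains its maximum.

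The final equivalence is where the codimension bound from the first part enters: any prime contained in $\mathfrak{p}$ and in $\supp \Ext^p(G,\Ok)$ must have codimension $\geq p$ and so must equal $\mathfrak{p}$ itself; thus $\Ext^p(G,\Ok)_\mathfrak{p}$ is supported only at the maximal ideal $\mathfrak{p}\Ok_\mathfrak{p}$, and if nonzero has this ideal as an associated prime, which translates via localization back to $\mathfrak{p} \in \Ass \Ext^p(G,\Ok)$. The main obstacle here is organizational rather than substantive: one needs to keep the ambient codimension of $\mathfrak{p}$ in $\Ok$ cleanly separated from the depth and projective dimension computations taking place inside the localised ring $\Ok_\mathfrak{p}$. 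Once this separation is in place, the whole argument reduces to Auslander--Buchsbaum together with the standard fact that the maximal ideal of a local ring is an associated prime of any nonzero finite-length module over it.
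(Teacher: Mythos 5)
Your argument is correct and is a self-contained proof of the statement; the paper itself offers no proof and simply cites \cite{EHV}*{Theorem~1.1}. Your route is the standard localization argument: reduce everything to the regular local ring $\Ok_\mathfrak{p}$ of dimension $\codim\mathfrak{p}$, kill $\Ext^p$ by global dimension when $\codim\mathfrak{p}<p$, and for $\codim\mathfrak{p}=p$ run the chain $\mathfrak{p}\in\Ass G\Leftrightarrow\depth G_\mathfrak{p}=0\Leftrightarrow\pd G_\mathfrak{p}=p\Leftrightarrow\Ext^p(G,\Ok)_\mathfrak{p}\neq0\Leftrightarrow\mathfrak{p}\in\Ass\Ext^p(G,\Ok)$. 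Each link is sound: the first is the usual fact that the maximal ideal is associated iff depth is zero plus compatibility of $\Ass$ with localization, the second is Auslander--Buchsbaum in the regular local ring, the third is the fact that for a minimal free resolution of length $d$ the $d$-th differential has entries in the maximal ideal so the dual cokernel $\Ext^d(M,R)$ is nonzero by Nakayama (while $\Ext^p$ vanishes automatically if $\pd<p$), and the last uses the codimension bound you just established, which forces $\Ext^p(G,\Ok)_\mathfrak{p}$ to be of finite length over $\Ok_\mathfrak{p}$, so being nonzero is the same as having $\mathfrak{p}\Ok_\mathfrak{p}$ as an associated prime. Two minor points worth making explicit: you implicitly use that $\Ext$ commutes with localization for finitely generated modules over Noetherian rings, and you should dispose of the degenerate case $G_\mathfrak{p}=0$ up front (then neither side holds, so the equivalence is vacuous) before invoking Auslander--Buchsbaum, which requires a nonzero module. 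What your approach buys over the paper's citation is self-containment and transparency: one sees exactly which properties of the regular local ring are used (Serre's global-dimension characterization and Auslander--Buchsbaum), whereas the EHV reference proves a more general statement in a broader context.
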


In particular, if $G$ has no associated primes of codimension $k$, then
\begin{equation} \label{eq:pure-ass-primes}
    \codim \Ext^k(G,\Ok) \geq k+1.
\end{equation}

When $p = \codim G$, we have the following information about $\Ext^p(G,\Ok)$, see \cite{BjRings}*{Lemma~7.11},
cf., also the discussion after \cite{HL}*{Lemma~1.1.8}.

\begin{prop} \label{prop:ext-kp}
    Let $G$ be a finitely generated $\Ok$-module, and let $p = \codim G$.
    Then 
    \begin{equation} \label{eq:ext-kp}
        \codim \Ext^k(\Ext^p(G,\Ok),\Ok) \geq k+2 \text{ for } k > p.
    \end{equation}
\end{prop}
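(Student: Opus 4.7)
The plan is to use the bidualizing (``two-sided $\Ext$'') spectral sequence. Since $\Ok$ is a regular local ring, it is Gorenstein of finite self-injective dimension, and the double dualization functor is naturally isomorphic to the identity on the bounded derived category of finitely generated modules. This biduality yields a convergent spectral sequence
\[
    E_2^{i,j} = \Ext^i(\Ext^j(G,\Ok),\Ok) \Rightarrow H^{i-j}(G),
\]
where the abutment is $G$ in total degree $i - j = 0$ and $0$ in all other total degrees, with differentials $d_r \colon E_r^{i,j} \to E_r^{i+r,j+r-1}$. Since $p = \codim G$, the vanishing $\Ext^j(G,\Ok) = 0$ for $j < p$ wipes out the entire half-plane $j < p$ of the $E_2$ page.

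To bound the codimension of $\Ext^k(\Ext^p(G,\Ok),\Ok)$ for $k > p$, I fix a prime $\mathfrak{p} \subseteq \Ok$ of height $q \leq k + 1$ and show that $\Ext^k(\Ext^p(G,\Ok),\Ok)_\mathfrak{p} = 0$; varying $\mathfrak{p}$ gives $\codim \Ext^k(\Ext^p(G,\Ok),\Ok) \geq k + 2$. Localizing at $\mathfrak{p}$, the spectral sequence above passes to the analogous one over the regular local ring $\Ok_\mathfrak{p}$ of dimension $q$, which converges to $G_\mathfrak{p}$ in total degree $0$. I then inspect the entry $E_2^{k,p}$ at $\mathfrak{p}$. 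Every incoming differential $d_r$ with $r \geq 2$ originates from $E_r^{k-r,p-r+1}$; since $p - r + 1 < p$, this source is already zero on $E_2$. Every outgoing $d_r$ with $r \geq 2$ lands in $E_r^{k+r,p+r-1}$; since $k + r \geq k + 2 > q$ and $\Ok_\mathfrak{p}$ has global dimension $q$, the functor $\Ext^{k+r}_{\Ok_\mathfrak{p}}(-,\Ok_\mathfrak{p})$ vanishes identically, so this target is zero as well. Hence $E_\infty^{k,p} = E_2^{k,p}$. But $E_\infty^{k,p}$ is a subquotient of the abutment in total degree $k - p > 0$, which is zero, so $E_2^{k,p} = \Ext^k(\Ext^p(G,\Ok),\Ok)_\mathfrak{p} = 0$.

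The main obstacle is the bookkeeping needed to install the bidualizing spectral sequence with the stated abutment and differential bidegrees; this is a standard consequence of the hyper-$\Ext$ or Cartan--Eilenberg machinery together with the fact that $\Ok$ is its own dualizing module, but care must be taken with the contravariance of $\Hom(-,\Ok)$ and with the corresponding sign convention on the total grading. Once this framework is accepted, the proof reduces to a clean bidegree count on the $(i,j)$-grid: the codimension hypothesis kills the half-plane $j < p$, the bound $q \leq k + 1$ combined with the global dimension of $\Ok_\mathfrak{p}$ kills the half-plane $i > q$, and together these cut off every $d_r$ touching $E_2^{k,p}$ for $r \geq 2$, so that $E_2^{k,p}$ is forced to equal the vanishing off-diagonal abutment.
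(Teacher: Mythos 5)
The paper does not prove this statement; it simply cites \cite{BjRings}*{Lemma~7.11} (with a remark translating depth into codimension). Your argument via the bidualizing spectral sequence is correct and is, in substance, the standard proof underlying that reference: the $E_2$-page vanishes for $j<p$ by the codimension hypothesis on $G$, localizing at a height-$q$ prime with $q\leq k+1$ kills all entries with first index exceeding $q$ since $\mathrm{gl.dim}\,\Ok_{\mathfrak p}=q$, every $d_r$ ($r\geq 2$) touching $E_2^{k,p}$ therefore has zero source or target, and so $E_2^{k,p}=E_\infty^{k,p}$ is a subquotient of the vanishing abutment in total degree $k-p>0$.
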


We remark that in \cite{BjRings}, the result is stated in terms of $\depth M$, the depth of a 
module $M$, which in our case equals $\codim M$, since $\Ok$ is Cohen-Macaulay.

\begin{prop} \label{prop:ext-lc-pure}
    \noindent a) Let $G$ be a finitely generated $\Ok$-module of codimension $p$.
    Then $\Ext^p(G,\Ok)$ has pure codimension $p$.

    \noindent b) Let $Z \subseteq (\C^n,0)$ be a subvariety of pure codimension $p$.
    Then $H^p_Z(\Ok)$ has pure codimension $p$.
\end{prop}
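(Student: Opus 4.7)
The plan is to handle the two parts separately, using Propositions~\ref{prop:ext-ass-primes} and \ref{prop:ext-kp} for part~a), and then bootstrapping to part~b) via the defining direct limit.

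For part~a), set $M := \Ext^p(G,\Ok)$. By Proposition~\ref{prop:ext-ass-primes}, $\codim M \geq p$, and the associated primes of $M$ of codimension $p$ are precisely those of $G$ of codimension $p$; since $\codim G = p$, at least one such prime exists, so $\codim M = p$ and the set of codimension-$p$ associated primes is already identified. The content of purity is therefore to rule out associated primes of codimension strictly greater than $p$. For this I would apply Proposition~\ref{prop:ext-ass-primes} a second time, now to $M$ itself: for each integer $k>p$, the associated primes of $M$ of codimension $k$ coincide with the associated primes of $\Ext^k(M,\Ok)$ of codimension $k$. But Proposition~\ref{prop:ext-kp} (applied to the module $G$ of codimension $p$) gives $\codim \Ext^k(M,\Ok) \geq k+2$, so $\Ext^k(M,\Ok)$ cannot have any associated prime of codimension $k$. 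Thus $M$ has no associated prime of codimension $k>p$, and $M$ is of pure codimension $p$.

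For part~b), I would first replace $J$ by its radical, which is legitimate since $H^p_Z(\Ok)$ depends only on $\sqrt{J}$. Each $\Ok/J^t$ then has support equal to $Z$ of pure codimension $p$, hence $\codim(\Ok/J^t)=p$, and part~a) applies to yield that $\Ext^p(\Ok/J^t,\Ok)$ has pure codimension $p$ (or is zero). Next I would check that the transition maps in the direct system computing $H^p_Z(\Ok)$ are injective: applying $\Hom(-,\Ok)$ to $0\to J^t/J^{t+1}\to \Ok/J^{t+1}\to \Ok/J^t\to 0$ produces a long exact sequence in which the relevant connecting term $\Ext^{p-1}(J^t/J^{t+1},\Ok)$ vanishes, because $J^t/J^{t+1}$ is supported on $Z$ and hence has codimension $\geq p$ (invoking Proposition~\ref{prop:ext-ass-primes} once more to see that $\Ext^{p-1}$ of a module of codimension $\geq p$ vanishes, via the same trick of replacing the module by its codimension).

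Having established injectivity, $H^p_Z(\Ok)$ is realized as the increasing union $\bigcup_t \Ext^p(\Ok/J^t,\Ok)$. For an increasing union of submodules, every associated prime of the union is the annihilator of an element already lying in some term, hence $\Ass H^p_Z(\Ok)=\bigcup_t \Ass \Ext^p(\Ok/J^t,\Ok)$; by part~a) every prime on the right has codimension $p$. If $H^p_Z(\Ok)=0$ the claim is trivial, and otherwise the existence of some associated prime (guaranteed for any nonzero module over a Noetherian ring) together with the codimension bound above forces pure codimension $p$.

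The main obstacle is really the bookkeeping at the start of part~a): one must not confuse the two uses of Proposition~\ref{prop:ext-ass-primes} (once applied to $G$ to determine the codim-$p$ behaviour of $M$, once to $M$ to exclude higher associated primes), and one must invoke Proposition~\ref{prop:ext-kp} with the right input, namely $G$ of codimension $p$. Once this is arranged cleanly, part~b) is a short corollary of part~a) together with the direct-limit description of local cohomology.
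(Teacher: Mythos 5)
Your part a) is essentially the paper's argument: you rule out associated primes of codimension $k>p$ of $M:=\Ext^p(G,\Ok)$ by applying Proposition~\ref{prop:ext-ass-primes} to $M$ itself and comparing with the bound $\codim\Ext^k(\Ext^p(G,\Ok),\Ok)\geq k+2$ from Proposition~\ref{prop:ext-kp}. For part b) you take a slightly different route from the paper. The paper fixes an associated prime $P=\ann\mu$ of $H^p_Z(\Ok)$, picks a representative $\mu_t\in\Ext^p(\Ok/J^t,\Ok)$, and notes that the increasing chain of annihilators $\ann\mu_t$ stabilizes at $P$ by Noetherianness, so that $P$ is already an associated prime of some $\Ext^p(\Ok/J^t,\Ok)$. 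You instead prove that the transition maps in the direct system are injective, so that $H^p_Z(\Ok)$ is an honest increasing union whose set of associated primes equals the union of those of the terms. Both are correct; the paper's version avoids the injectivity check, while yours avoids the annihilator-stabilization argument and directly exhibits $\Ass H^p_Z(\Ok)$. Incidentally the replacement of $J$ by its radical is harmless but unnecessary, since $\Ok/J^t$ is already supported on $Z(J^t)=Z$ and hence has codimension $p$ regardless.

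One correction in your argument for injectivity of the transition maps: the vanishing $\Ext^{p-1}(J^t/J^{t+1},\Ok)=0$ for a module of codimension $\geq p$ is Proposition~\ref{prop:ext-vanishing}, not Proposition~\ref{prop:ext-ass-primes}. Proposition~\ref{prop:ext-ass-primes} only says that $\Ext^{p-1}(H,\Ok)$ has codimension $\geq p-1$ (and, using that $H$ has no associated prime of codimension $p-1$, that its codimension is $\geq p$); it does not give vanishing, which is what the long exact sequence actually requires. So the parenthetical attempt to extract the vanishing from Proposition~\ref{prop:ext-ass-primes} ``by replacing the module by its codimension'' does not work as stated; with the citation replaced by Proposition~\ref{prop:ext-vanishing} the step is correct and the rest of the proof goes through.
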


\begin{proof}
    Part a) follows by combining Proposition~\ref{prop:ext-ass-primes} and Proposition~\ref{prop:ext-kp},
    since if $\Ext^p(G,\Ok)$ has an associated prime of codimension $k > p$, then we would have
    that $\codim \Ext^k(\Ext^p(G,\Ok),\Ok) = k$.

    Part b) is then a consequence of a) as follows:
    If $P$ is an associated prime of $H^p_Z(\Ok)$, then there exists some $\mu \in H^p_Z(\Ok)$
    such that $\ann \mu = P$. By \eqref{eq:localcohomology}, if $t \gg 1$, there exists some $\mu_t \in \Ext^p(\Ok/J^t,\Ok)$
    representing $\mu$, where $J$ is an ideal such that $Z(J) = Z$. By the Noetherianness of $\Ok$, we can assume that
    $t \gg 1$ is such that $\ann \mu_t = P$. Hence, by a), $\codim P = p$.
\end{proof}

For Proposition~\ref{prop:ext-lc-pure}, it would have sufficed that the right-hand side of \eqref{eq:ext-kp} was $k+1$ instead of $k+2$,
but in the proof of Theorem~\ref{thm:local-duality-surjective} and so on, in Section~\ref{sect:general-pairing},
we will need the stronger inequality with $k+2$.

\begin{cor} \label{cor:descends}
    Assume that $A$ and $B$ are $\Ok$-modules of codimension $\geq p$, and let $Z$ be a subvariety
    of pure codimension $p$.
    Then any $\Ok$-bilinear pairing
    \begin{equation*}
        A \times B \to H^p_Z(\Ok)
    \end{equation*}
    descends to a pairing
    \begin{equation*}
        A/A_{(p+1)} \times B/B_{(p+1)} \to H^p_Z(\Ok).
    \end{equation*}
\end{cor}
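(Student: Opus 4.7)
The plan is to reduce the statement to the single assertion that if $a \in A_{(p+1)}$ then $\langle a, b\rangle = 0$ in $H^p_Z(\Ok)$ for every $b \in B$ (and symmetrically in the second argument). Bilinearity of the pairing together with this vanishing is exactly what it means for the pairing to descend modulo $A_{(p+1)}$ and $B_{(p+1)}$, so nothing else is required.

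The key ingredient is Proposition~\ref{prop:ext-lc-pure}(b), which tells us that $H^p_Z(\Ok)$ has pure codimension $p$. From this I would extract the following elementary fact: if $M$ is a finitely generated $\Ok$-module of pure codimension $p$ and $m \in M$, then either $m = 0$ or $\supp m$ has pure codimension $p$. Indeed, $\Ok \cdot m \cong \Ok/\ann(m)$ embeds into $M$, so every associated prime of $\Ok/\ann(m)$ is an associated prime of $M$ and hence of codimension $p$; in particular every minimal prime of $\ann(m)$ has codimension $p$, which forces $\supp m = V(\ann m)$ to have codimension exactly $p$ whenever $m \neq 0$. (One mild subtlety is to be sure $H^p_Z(\Ok)$ is treated as a module whose associated primes make sense; this is fine because in the realisation \eqref{eq:localcohomology} every element is represented by some $\mu_t \in \Ext^p(\Ok/J^t,\Ok)$, which is exactly how purity was established in the proof of Proposition~\ref{prop:ext-lc-pure}.)

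With this fact in hand, the argument is immediate. Fix $a \in A_{(p+1)}$ and $b \in B$, and set $m := \langle a,b\rangle \in H^p_Z(\Ok)$. By $\Ok$-bilinearity, every $r \in \ann(a)$ satisfies $r \cdot m = \langle ra, b\rangle = 0$, so $\ann(m) \supseteq \ann(a)$ and therefore $\supp m \subseteq \supp a$. But $a \in A_{(p+1)}$ means $\supp a$ has codimension $\geq p+1$, while by the previous paragraph a nonzero element of $H^p_Z(\Ok)$ must have support of codimension exactly $p$; hence $m = 0$. The symmetric argument with roles of $A$ and $B$ swapped handles $B_{(p+1)}$, and the corollary follows.

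The only step that requires any thought is the purity observation in the middle paragraph; once that is isolated, the rest is pure bookkeeping. I do not anticipate a genuine obstacle.
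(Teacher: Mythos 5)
Your proof is correct and takes essentially the same approach as the paper's: both reduce to the observation that an element of $H^p_Z(\Ok)$ whose support has codimension $\geq p+1$ must vanish, because $H^p_Z(\Ok)$ has pure codimension $p$ by Proposition~\ref{prop:ext-lc-pure}, combined with the bilinearity fact that $\supp\la a,b\ra \subseteq (\supp a)\cap(\supp b)$. The paper compresses the annihilator and associated-prime bookkeeping into a single sentence, whereas you spell it out explicitly (and correctly note the subtlety that $H^p_Z(\Ok)$ is not finitely generated, handled via the representatives $\mu_t$); the underlying mechanism is identical.
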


\begin{proof}
    Clearly, for any $\Ok$-bilinear pairing, $\supp \la a,b \ra \subseteq (\supp a) \cap (\supp b)$.
    Thus, if the support of either $a$ or $b$ has codimension $\geq p+1$, then
    $\la a,b \ra$ has codimension $\geq p+1$, and thus is $0$, since $H^p_Z(\Ok)$ has pure codimension $p$ by Proposition~\ref{prop:ext-lc-pure}.
\end{proof}

As a consequence, any way of defining the pairings \eqref{eq:local-duality-pairing}
and \eqref{eq:general-duality-pairing} will descend to pairings \eqref{eq:local-duality-pairing-descended}
and \eqref{eq:general-duality-pairing-descended}.

\subsection{The comparison morphism} \label{ssect:comparison-morphism}

We will need the following result about how a morphism of modules induces a
morphism of free resolutions of the modules.

\begin{prop} \label{prop:comparison-morphism}
    Let $\alpha : F \to G$ be a homomorphism of finitely generated $\Ok$-modules, and let $(K,\psi)$ and
    $(E,\varphi)$ be free resolutions of $F$ and $G$.
    Then, there exists a morphism $a : (K,\psi) \to (E,\varphi)$ of complexes which extends $\alpha$.

    If $b$ is any other such morphism, then there exists a homotopy $s : (K,\psi) \to (E,\varphi)$
    of degree $-1$, i.e., consisting of morphisms $s_k : K_k \to E_{k+1}$,
    such that $a_i - b_i = \varphi_{i+1} s_i - s_{i-1} \psi_i$.
\end{prop}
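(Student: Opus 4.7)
The plan is to construct both $a$ and the homotopy $s$ by induction on degree, using at each step the freeness (hence projectivity) of the modules $K_i$ combined with the exactness of $(E,\varphi)$ below $G$. This is the classical argument underlying the comparison theorem for projective resolutions, and most of the work consists in setting up the inductive steps carefully.

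For the existence of $a$, first build $a_0 : K_0 \to E_0$ by lifting the composition $\alpha \circ \psi_0 : K_0 \to G$ along the surjection $\varphi_0 : E_0 \to G$, which is possible because $K_0$ is free. Proceeding by induction, suppose $a_0,\dots,a_{i-1}$ have been constructed so that $\varphi_j a_j = a_{j-1} \psi_j$ for all $j < i$, using the convention $a_{-1} = \alpha$ together with the augmentations $\psi_0 : K_0 \to F$ and $\varphi_0 : E_0 \to G$. The chain-map condition at level $i-1$ then yields
\begin{equation*}
    \varphi_{i-1}(a_{i-1} \psi_i) = a_{i-2} \psi_{i-1} \psi_i = 0,
\end{equation*}
so $a_{i-1} \psi_i$ takes values in $\ker \varphi_{i-1} = \im \varphi_i$. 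Since $K_i$ is free and $\varphi_i$ surjects onto its image, one lifts to obtain $a_i : K_i \to E_i$ with $\varphi_i a_i = a_{i-1} \psi_i$.

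For uniqueness up to homotopy, apply an analogous lifting argument to the chain map $c := a - b$, which extends the zero map $F \to G$. For the base case, $\varphi_0 c_0 = \alpha \psi_0 - \alpha \psi_0 = 0$, so $c_0$ factors through $\im \varphi_1$, and freeness of $K_0$ furnishes a lift $s_0 : K_0 \to E_1$ with $\varphi_1 s_0 = c_0$, that is, the formula in the statement at $i=0$ with the convention $s_{-1} = 0$. Inductively, once $s_0,\dots,s_{i-1}$ are in hand, the combination $c_i + s_{i-1} \psi_i$ (with the sign dictated by the stated formula) is killed by $\varphi_i$ thanks to the chain-map property of $c$ together with the inductive relation for $s_{i-1}$, and a final lift through $\varphi_{i+1}$ produces $s_i$.

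There is no serious obstacle here; the argument is entirely mechanical once projectivity and exactness are invoked. The only real point to watch is the bookkeeping of the signs in the relation $a_i - b_i = \varphi_{i+1} s_i - s_{i-1} \psi_i$, and that the base case $i = 0$ is interpreted with $s_{-1} = 0$ so that the formula reduces to $\varphi_1 s_0 = a_0 - b_0$.
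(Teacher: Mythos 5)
Your approach is exactly the standard inductive lifting argument, which is precisely what the paper itself appeals to (it gives no proof beyond the citation to Eisenbud, Proposition A3.13), so in spirit you are on the right track. Your treatment of the existence of $a$ is complete and correct.

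There is, however, a concrete problem in the inductive step for the homotopy. You assert that the combination $c_i + s_{i-1}\psi_i$ is killed by $\varphi_i$, taking the sign from the paper's statement $a_i - b_i = \varphi_{i+1}s_i - s_{i-1}\psi_i$. If the inductive hypothesis is $c_{i-1} = \varphi_i s_{i-1} - s_{i-2}\psi_{i-1}$, then $\varphi_i s_{i-1}\psi_i = c_{i-1}\psi_i + s_{i-2}\psi_{i-1}\psi_i = c_{i-1}\psi_i$, so
\begin{equation*}
    \varphi_i\bigl(c_i + s_{i-1}\psi_i\bigr) = c_{i-1}\psi_i + c_{i-1}\psi_i = 2\,c_{i-1}\psi_i,
\end{equation*}
which is not zero in general, and the lift you want does not exist. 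The issue is that the paper's displayed formula has a sign slip: the standard homotopy identity, and the one the paper itself uses later (see the displayed identity \eqref{eq:g-homotopy} in the proof of Lemma~\ref{lma:colon}), is $a_i - b_i = \varphi_{i+1}s_i + s_{i-1}\psi_i$. With that sign, the combination to lift is $c_i - s_{i-1}\psi_i$, and one checks $\varphi_i(c_i - s_{i-1}\psi_i) = c_{i-1}\psi_i - c_{i-1}\psi_i = 0$, so the induction closes. You flagged the sign bookkeeping as the point to watch, but then accepted the paper's formula without verifying that it propagates; as written, that step fails, and your proof needs either the sign correction in the target identity or the corrected combination $c_i - s_{i-1}\psi_i$ in the lifting step.
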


We say that $a$ \emph{extends} $\alpha$ if the map induced by $a_0$ on
$K_0/(\im \psi_1) \cong F \to G \cong E_0/(\im \varphi_1)$ equals $\alpha$.
Both the existence and uniqueness up to homotopy of $a$ follow from defining $a$ or
$s$ inductively by a relatively straightforward diagram chase,
see \cite{Eis}*{Proposition~A3.13}.
In Example~\ref{ex:ci-morphism} below, we give examples of when such a morphism can be explicitly
constructed for certain morphisms between Koszul complexes.

Note in particular, that if one represents $\Ext^p(F,\Ok)$ and $\Ext^p(G,\Ok)$ as $H^p(\Hom(K_\bullet,\Ok))$
and $H^p(\Hom(E_\bullet,\Ok))$, then $\alpha^* : \Ext^p(F,\Ok) \to \Ext^p(G,\Ok)$ is given by $a_p^*$.

\subsection{Definition and properties of the pairing} \label{ssect:grothendieck-definition}

In this section, we give the first explicit way of defining the pairings \eqref{eq:local-duality-pairing}
and \eqref{eq:general-duality-pairing}.
This definition of the pairing coincides with the pairing \eqref{eq:pairing-general}, but since
we only consider a special case, i.e., when $i = 0$, we can describe the pairing more concretely.
Let $Z$ be such that $\supp G_{(p)} \subseteq Z$, and let $J$ be an ideal such that $Z(J) = Z$.
Note that since $\supp G_{(p)} \subseteq Z$, by the Nullstellensatz,
if $g \in G_{(p)}$, then $J^t g = 0$ for $t \gg 1$. Thus, any element $g \in G_{(p)}$ defines
a morphism
\begin{equation} \label{eq:epsilon-g-def}
    \epsilon_g : \Ok/J^t \to G \text{ such that } \epsilon_g(1) = g.
\end{equation}
We thus get an induced morphism $\epsilon_g^* : \Ext^p(G,\Ok) \to \Ext^p(\Ok/J^t,\Ok)$.
We then compose this with the morphism \eqref{eq:pi-t}.
Using the notation from above, the \emph{Grothendieck pairing}
\begin{equation*}
    G_{(p)} \times \Ext^p(G,\Ok) \to H^p_Z(\Ok)
\end{equation*}
is defined as
\begin{equation} \label{eq:pairing-grothendieck}
    \la g, \xi \ra_{Gr} := \pi_t( \epsilon_g^* \xi ).
\end{equation}

\begin{lma} \label{lma:functorial-grothendieck}
    The Grothendieck pairing \eqref{eq:pairing-grothendieck} is functorial in $G$.
\end{lma}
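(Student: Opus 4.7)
The plan is to deduce functoriality of \eqref{eq:pairing-grothendieck} directly from the contravariance of $\Ext^p(-,\Ok)$ and the definition of the morphism $\epsilon_g$. Given $\alpha : F \to G$, an element $f \in F_{(p)}$, and $\xi \in \Ext^p(G,\Ok)$, the task is to prove
\[
\la \alpha(f),\xi \ra_{Gr} = \la f, \alpha^*\xi \ra_{Gr},
\]
i.e.\ that $\pi_t(\epsilon_{\alpha(f)}^*\xi) = \pi_t(\epsilon_f^*(\alpha^*\xi))$ in $H^p_Z(\Ok)$.

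First I would check that $\alpha$ sends $F_{(p)}$ into $G_{(p)}$, which is immediate since $\supp \alpha(f) \subseteq \supp f$, and then choose $t$ large enough that $J^t$ annihilates both $f$ and $\alpha(f)$; this is possible by the Nullstellensatz since both supports are contained in $Z = Z(J)$. With such a $t$, the morphisms $\epsilon_f : \Ok/J^t \to F$ and $\epsilon_{\alpha(f)} : \Ok/J^t \to G$ from \eqref{eq:epsilon-g-def} are simultaneously defined.

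The key observation is that $\alpha \circ \epsilon_f = \epsilon_{\alpha(f)}$ as morphisms $\Ok/J^t \to G$: both sides send the generator $1$ to $\alpha(f)$, and a homomorphism out of the cyclic module $\Ok/J^t$ is determined by the image of $1$. Applying the contravariant functor $\Ext^p(-,\Ok)$ then yields $\epsilon_{\alpha(f)}^* = \epsilon_f^* \circ \alpha^*$, and composing with $\pi_t$ gives the desired identity; that the resulting class in $H^p_Z(\Ok)$ does not depend on the choice of sufficiently large $t$ follows from the direct limit definition \eqref{eq:localcohomology} and the compatibility of the maps $\pi_t$ in \eqref{eq:pi-t}. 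Because the content is already packaged into the definition of $\epsilon_g$ and the functoriality of $\Ext$, no substantial obstacle is anticipated; the only bookkeeping is to identify $\alpha \circ \epsilon_f$ with $\epsilon_{\alpha(f)}$ and to choose $t$ uniformly for $f$ and $\alpha(f)$.
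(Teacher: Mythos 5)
Your proof is correct and takes essentially the same approach as the paper: establishing $\alpha \circ \epsilon_f = \epsilon_{\alpha(f)}$ and then applying contravariant functoriality of $\Ext^p(\bullet,\Ok)$ to conclude $\epsilon_{\alpha(f)}^* = \epsilon_f^* \circ \alpha^*$. The extra bookkeeping you include (uniform choice of $t$, compatibility with the direct limit, and checking $\alpha(F_{(p)}) \subseteq G_{(p)}$) is correct but left implicit in the paper.
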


\begin{proof}
    This follows directly from the functoriality of $\Ext^p(\bullet,\Ok)$, since if
    $f = \alpha(g)$, then $\epsilon_f = \alpha \epsilon_g$, and thus, $\epsilon_f^* = \epsilon_g^* \alpha^*$.
\end{proof}

\section{Complete intersection ideals} \label{sect:ci}

In this section, we consider the (already well-known) case of Theorem~\ref{thm:local-duality}, when
$G = \Ok/J$, where $J$ is a complete intersection ideal of codimension $p$.
In Theorem~\ref{thm:local-duality}, we also take $Z = Z(J)$, and take $J(g_1,\dots,g_p)$
as the defining ideal of $Z$. In this case, $J G = 0$, so for $g \in \Ok/J$, we can take $t = 1$ in defining the morphism
\eqref{eq:epsilon-g-def}, i.e., $\epsilon_g : \Ok/J \to \Ok/J$ is just multiplication with $g$.

Note that when $\epsilon_g : \Ok/J \to \Ok/J$ is just multiplication with $g$, then the induced pairing
$\epsilon_g^* : \Ext^p(\Ok/J,\Ok) \to \Ext^p(\Ok/J,\Ok)$ can be taken as just multiplication with $g$.
Thus, the pairing in \eqref{eq:pairing-grothendieck} is given by
\begin{equation} \label{eq:pairing-grothendieck-ci}
    \la g, \xi \ra = \pi_1(g\xi),
\end{equation}
where $\pi_1 : \Ext^p(\Ok/J,\Ok) \to H^p_Z(\Ok)$.

In order to prove non-degeneracy of the pairing, we use the following result about $\Ext^p(\Ok/J,\Ok)$,
which follows from \cite{GH}*{Proposition, p. 690}.
\begin{lma} \label{lma:ext-ci}
    Let $J$ be a complete intersection ideal of codimension $p$. Then there is a (non-canonical) isomorphism
    \begin{equation} \label{eq:ext-ci-isom}
        \Ok/J \to \Ext^p(\Ok/J,\Ok),
    \end{equation}
    and if $I \subseteq J$ is also a complete intersection ideal of codimension $p$,
    then the morphism
    \begin{equation*}
        \Ext^p(\Ok/J,\Ok) \to \Ext^p(\Ok/I,\Ok)
    \end{equation*}
    induced by the natural surjection $\Ok/I \to \Ok/J$ is injective.
\end{lma}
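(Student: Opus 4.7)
For part (a) I would resolve $\Ok/J$ by the Koszul complex $K_\bullet = (\Lambda^\bullet \Ok^p, \delta_f)$ on generators $e_1, \dots, e_p$, which is a free resolution because $(f_1,\dots,f_p)$ is a regular sequence. Computing $\Ext^p(\Ok/J, \Ok) = H^p(\Hom(K_\bullet, \Ok))$ is then immediate: the top piece $\Hom(K_p, \Ok)$ is free of rank one on $(e_1 \wedge \cdots \wedge e_p)^*$, while the differential $\Hom(K_{p-1}, \Ok) \to \Hom(K_p, \Ok)$ sends $\sum a_i e_i^*$ to $(\sum f_i a_i)(e_1 \wedge \cdots \wedge e_p)^*$, so its image is $J \cdot (e_1 \wedge \cdots \wedge e_p)^*$. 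Thus $1 \mapsto [(e_1 \wedge \cdots \wedge e_p)^*]$ yields the isomorphism $\Ok/J \xrightarrow{\sim} \Ext^p(\Ok/J, \Ok)$, with the dependence on the chosen basis $f$ accounting for the non-canonicity.

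For part (b), the cleanest approach is to go through a change of rings. Since $I$ is generated by an $\Ok$-regular sequence of length $p$, for any $\Ok/I$-module $M$ there is a natural isomorphism $\Ext^p_\Ok(M, \Ok) \cong \Hom_{\Ok/I}(M, \Ok/I)$ (a form of Rees' theorem, obtainable either from the change-of-rings spectral sequence or, concretely, by lifting a free $\Ok/I$-resolution of $M$ through the Koszul complex of $I$). Applied to $M = \Ok/J$ this gives $\Ext^p(\Ok/J, \Ok) \cong (I:J)/I$, and applied to $M = \Ok/I$ it gives $\Ext^p(\Ok/I, \Ok) \cong \Ok/I$. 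Naturality in $M$ then identifies the morphism induced by the surjection $\Ok/I \twoheadrightarrow \Ok/J$ with the tautological inclusion $(I:J)/I \hookrightarrow \Ok/I$, which is manifestly injective.

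As a consistency check (and to tie the statement to the explicit Koszul description from (a) that will be used later when we switch over to residue currents), I would also verify the claim via the Koszul comparison morphism. Writing $g_i = \sum_j a_{ij} f_j$ for the generators of $I$ (possible since $I \subseteq J$), Proposition~\ref{prop:comparison-morphism} provides a chain map $K^I_\bullet \to K^J_\bullet$ lifting $\Ok/I \twoheadrightarrow \Ok/J$ whose degree-$p$ component is multiplication by $\det A$, so under the isomorphisms of (a) the induced morphism on $\Ext^p$ becomes $[h] \mapsto [h \det A]$ from $\Ok/J$ to $\Ok/I$ (well-defined by Cramer's rule, since $\det A \cdot J \subseteq I$). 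Injectivity amounts to $I : (\det A) = J$, and the main obstacle is the nontrivial inclusion $I : (\det A) \subseteq J$: this follows cleanly from the change-of-rings identification above, which moreover shows that $\det A$ generates $(I:J)/I$ as an $\Ok/I$-module, consistent with the linkage formula $I:(I:J)=J$ recorded in \eqref{eq:link-ideals}.
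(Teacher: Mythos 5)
Your proof is correct, and it actually does quite a bit more than the paper, which merely cites Griffiths--Harris for this lemma and then records the explicit Koszul identification of the isomorphism after the statement. Part (a) coincides exactly with the explicit description given in \eqref{eq:ext-ci-isom-explicit}. For part (b), your change-of-rings (Rees) argument is valid and is genuinely different in spirit from the technique the paper uses for analogous injectivity statements elsewhere. The cleaner route, entirely in line with how the paper argues in Lemma~\ref{lma:HpZW} and Lemma~\ref{lma:reduction-second}, is to apply the long exact sequence of $\Ext(\bullet,\Ok)$ to the short exact sequence
\begin{equation*}
0 \to J/I \to \Ok/I \to \Ok/J \to 0,
\end{equation*}
and observe that $J/I$, being a submodule of $\Ok/I$, has codimension $\geq p$, so $\Ext^{p-1}(J/I,\Ok)=0$ by Proposition~\ref{prop:ext-vanishing}; this forces $\Ext^p(\Ok/J,\Ok)\hookrightarrow\Ext^p(\Ok/I,\Ok)$. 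That argument is shorter and does not even use that $J$ is a complete intersection, only that $I$ has codimension $p$. What your Rees-theorem route buys is the explicit identification $\Ext^p(\Ok/J,\Ok)\cong(I:J)/I$, which foreshadows the linkage material in Section~\ref{sect:linkage} and gives for free that $\det A$ generates $(I:J)/I$ and that $I:(\det A)=J$. The Koszul consistency check correctly reproduces the comparison map $a_p=\det A$ from Example~\ref{ex:ci-morphism}, though note your $f$ and $g$ are swapped relative to the paper's conventions there. One small caveat: the invocation of $J=J_{[p]}$ in the last line implicitly uses that complete intersections are unmixed (Cohen--Macaulay), which is fine but worth stating.
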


For future reference, we also make the isomorphism \eqref{eq:ext-ci-isom} explicit.
Since $J$ is a complete intersection ideal, the Koszul complex $(\bigwedge \Ok^p,\delta_g)$ of $g$,
which we will denote $(K,\psi)$, is a free resolution of $\Ok/J$, where we denote $e_1,\dots,e_p$
the standard basis for $\Ok^p$, such that $\delta_g$ is the contraction with $\sum g_i e_i^*$.
We can thus represent $\Ext^p(\Ok/J,\Ok)$ as $H^p(\Hom(K_\bullet,\Ok))$.
The element $e_1\wedge \dots \wedge e_p$ is a basis of $K_p$, and the isomorphism
\eqref{eq:ext-ci-isom} using this representation of $\Ext$ is given by
\begin{equation} \label{eq:ext-ci-isom-explicit}
    h \mapsto h (e_1 \wedge \dots \wedge e_p)^*.
\end{equation}

\begin{lma} \label{lma:ci-nondeg}
    Let $G = \Ok/J$, where $J$ is a complete intersection ideal of codimension $p$, and let $Z = Z(J)$.
    Then the pairing \eqref{eq:local-duality-pairing},
    \begin{equation*}
        \Ok/J \times \Ext^p(\Ok/J,\Ok) \to H^p_Z(\Ok),
    \end{equation*}
    as given by \eqref{eq:pairing-grothendieck}, is non-degenerate in both arguments.
\end{lma}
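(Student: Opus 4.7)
The plan is to show that the map $\pi_1 : \Ext^p(\Ok/J,\Ok) \to H^p_Z(\Ok)$ is itself injective, from which both non-degeneracy assertions will follow with essentially no extra work.

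First I would observe that the local cohomology $H^p_Z(\Ok)$, which by \eqref{eq:localcohomology} is the direct limit of $\Ext^p(\Ok/J^t,\Ok)$, can equivalently be computed from the cofinal system of ``diagonal'' subideals $J^{(t)} := (g_1^t, \dots, g_p^t)$. These are complete intersection ideals of codimension $p$ with $Z(J^{(t)}) = Z$, and they are cofinal with $\{J^t\}$: one has $J^{(t)} \subseteq J^t$ trivially, and conversely $J^t \subseteq J^{(s)}$ once $t \geq p(s-1)+1$ by a pigeonhole argument on monomials in the $g_i$. Hence $H^p_Z(\Ok) = \varinjlim_t \Ext^p(\Ok/J^{(t)},\Ok)$, and $\pi_1$ is the natural map from $\Ext^p(\Ok/J,\Ok)$ into this limit.

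Next, for each $t$ the inclusion $J^{(t)} \subseteq J$ realises $J^{(t)}$ as a complete intersection subideal of $J$ of the same codimension, so the second part of Lemma~\ref{lma:ext-ci} applies and the transition morphism $\Ext^p(\Ok/J,\Ok) \to \Ext^p(\Ok/J^{(t)},\Ok)$ is injective. Since a filtered direct limit of injective maps of $\Ok$-modules is injective, I would conclude that $\pi_1$ itself is injective.

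With the injectivity of $\pi_1$ in hand, the rest is immediate. Non-degeneracy in the second argument: if $\la g, \xi \ra = \pi_1(g\xi) = 0$ for every $g \in \Ok/J$, taking $g = 1$ gives $\pi_1(\xi) = 0$, hence $\xi = 0$. Non-degeneracy in the first argument: using the explicit isomorphism \eqref{eq:ext-ci-isom-explicit}, the element $\xi_0 = (e_1 \wedge \dots \wedge e_p)^*$ corresponds to $1 \in \Ok/J$, and multiplication by $g$ on $\Ext^p(\Ok/J,\Ok)$ is intertwined under \eqref{eq:ext-ci-isom-explicit} with multiplication by $g$ on $\Ok/J$; thus if $\la g, \xi \ra = 0$ for every $\xi$, then in particular $\pi_1(g\xi_0) = 0$, and injectivity of $\pi_1$ together with \eqref{eq:ext-ci-isom-explicit} forces $g = 0$ in $\Ok/J$.

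The main technical obstacle is the injectivity of $\pi_1$; once one decides to compute the limit along the subsystem $\{J^{(t)}\}$ rather than $\{J^t\}$, the remaining ingredients (Lemma~\ref{lma:ext-ci} and exactness of filtered colimits) are standard. The one thing to double-check is the cofinality of $\{J^{(t)}\}$ within $\{J^t\}$, but this is purely combinatorial.
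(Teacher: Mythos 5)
Your proposal is correct and follows essentially the same route as the paper's own proof: both establish non-degeneracy by first proving that $\pi_1 : \Ext^p(\Ok/J,\Ok) \to H^p_Z(\Ok)$ is injective via the cofinal system of complete intersection subideals $(g_1^t,\dots,g_p^t)$ together with the second part of Lemma~\ref{lma:ext-ci}, and then deducing non-degeneracy from the isomorphism $\Ok/J \cong \Ext^p(\Ok/J,\Ok)$.
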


\begin{proof}
    We claim that $\pi_1 : \Ext^p(\Ok/J,\Ok) \to H^p_Z(\Ok)$ is injective, and thus,
    it is enough to prove non-degeneracy of the pairing
    \begin{equation*}
        \Ok/J \times \Ext^p(\Ok/J,\Ok) \to \Ext^p(\Ok/J,\Ok),
    \end{equation*}
    which is given by $(g,\xi) \mapsto g\xi$. The fact that this pairing is non-degenerate follows
    easily from the isomorphism \eqref{eq:ext-ci-isom}.

    One way of proving the claim is that in order
    to define $H^p_Z(\Ok)$, if $J_t$ is a family of ideals such that
    for any $t$, there exists $s$ and $r$ such that
    $J^s \subseteq J_r \subseteq J^t$, then
    \begin{equation} \label{eq:loc-reduced}
        H^p_Z(\Ok) \cong \varinjlim_t \Ext^p(\Ok/J_t,\Ok),
    \end{equation}
    see \cite{24Loc}*{Remark~7.9}.
    If we let $J_t := J(g_1^t,\dots,g_p^t)$, then $J_t \subseteq J^t$. In addition, by the pigeonhole
    principle, $J^{pt} \subseteq J_t$. Thus, we can represent $H^p_Z(\Ok)$ using \eqref{eq:loc-reduced}.
    Since $J_t \subseteq J$ is a complete intersection ideal of codimension $p$, the induced map
    \begin{equation*}
        \Ext^p(\Ok/J,\Ok) \to \Ext^p(\Ok/J_t,\Ok)
    \end{equation*}
    is injective by Lemma~\ref{lma:ext-ci}, so $\pi_1 : \Ext^p(\Ok/J,\Ok) \to H^p_Z(\Ok)$ is injective, proving
    the claim.

    Another less direct way to prove the claim is to instead use Lemma~\ref{lma:hom-to-extI} below.
\end{proof}

\subsection{Coleff-Herrera products}

If $f \in \Ok$, the \emph{principal value current} $1/f$,
can be defined by
\begin{equation*}
    \frac{1}{f} := \lim_{\epsilon \to 0^+} \frac{\bar{f}}{|f|^2 + \epsilon},
\end{equation*}
and satisfies $f(1/f) = 1$.
Using regularity of the $\dbar$-operator on distributions, it is then easily seen that $\ann_\Ok \dbar (1/f) = J(f)$,
i.e., $g \in \Ok$ lies in the annihilator of $\dbar(1/f)$, i.e.,  $g \dbar (1/f) = 0$, if and only if $g$ belongs to
the principal ideal $J(f)$ generated by $f$.
If $f = (f_1,\dots,f_p)$ is a tuple of holomorphic functions defining a complete intersection ideal of codimension $p$,
then Coleff and Herrera showed in \cite{CH} that one can give a reasonable meaning
to products of residue currents $\dbar(1/f_i)$, nowadays called the \emph{Coleff-Herrera product} of $f$, and written
\begin{equation*}
    \dbar \frac{1}{f_p} \wedge \dots \wedge \dbar \frac{1}{f_1}.
\end{equation*}
The \emph{duality theorem for Coleff-Herrera products}, proven independently by Passare, \cite{PMScand}, and Dickenstein-Sessa, \cite{DS1},
says that if $f$ defines a complete intersection ideal of codimension $p$, then
\begin{equation} \label{eq:ch-duality}
    \ann_\Ok \dbar \frac{1}{f_p} \wedge \dots \wedge \dbar \frac{1}{f_1} = J(f_1,\dots,f_p).
\end{equation}

\subsection{Representations of local cohomology classes as currents}

If $J \subseteq \Ok$ is an ideal, $\Ext^p(\Ok/J,\Ok)$ can be represented as $H^p(\Hom(\Ok/J,L_\bullet))$,
where $L$ is an injective resolution of $\Ok$.
Since the Dolbeault complex $(C^{0,\bullet},\dbar)$ of $(0,*)$-currents is an injective resolution of $\Ok$,
we can thus represent objects in $\Ext^p(\Ok/J,\Ok)$ as $\dbar$-closed $(0,p)$-currents annihilated by $J$.

In addition, if one represents $\Ext^p(\Ok/J^t,\Ok)$ as $H^p(\Hom(\Ok/J^t,C^{0,\bullet}))$, then the
morphism $\Ext^p(\Ok/J,\Ok) \to \Ext^p(\Ok/J^t,\Ok)$ is induced by the inclusion $\Hom(\Ok/J,C^{0,p}) \to \Hom(\Ok/J^t,C^{0,p})$,
which just corresponds to the fact that currents annihilated by $J$ are also annihilated by $J^t$.
Thus, using this representation, any element in $H^p_Z(\Ok)$
can be represented by a $(0,p)$-current annihilated by $J^t$ for $t \gg 1$, which due to the fact that a current has locally
finite order is equivalent to that it has support on $Z = Z(J)$. Thus, one has the following representation
of the local cohomology groups,
\begin{equation} \label{eq:lc-current-repr}
    H^p_Z(\Ok) \cong H^p(C^{0,\bullet}_Z),
\end{equation}
where $(C^{0,\bullet}_Z,\dbar)$ is the Dolbeault complex of $(0,*)$-currents with support on $Z$.

\begin{remark} \label{rem:moderate-cohomology}
The local cohomology groups we consider are in the local ring $\Ok = \Ok_{\C^n,0}$, which correspond
to the stalks of the \emph{moderate cohomology sheaf}, which is what is mainly treated in for example \cite{DS1}.
We remark however that these stalks are not the same as the stalks of the \emph{local cohomology sheaf},
cf., the introduction of \cite{DS1}.
\end{remark}

Another way of representing $\Ext^p(\Ok/J,\Ok)$ is as elements in $H^p(\Hom(E_\bullet,\Ok))$,
where $(E,\varphi)$ is a free resolution of $\Ok/J$, and by standard homological algebra,
there is a canonical isomorphism
\begin{equation} \label{eq:extrepr}
H^p(\Hom(E_\bullet,\Ok)) \cong H^p(\Hom(\Ok/J,L_\bullet)).
\end{equation}
If we in particular consider the case when $J$ is a complete intersection ideal of codimension $p$ as above,
generated by $f_1,\dots,f_p$, then one can take the Koszul complex $(K,\psi)$ of $f$ as a free resolution of $\Ok/J$,
and one has the representation \eqref{eq:ext-ci-isom-explicit} of $\Ext^p(\Ok/J,\Ok)$.
One thus gets a canonical isomorphism
\begin{equation} \label{eq:extrepr2}
    H^p(\Hom(K_\bullet,\Ok)) \cong H^p(\Hom(\Ok/J,C^{0,\bullet})).
\end{equation}
In \cite{DS1}, the canonical isomorphism \eqref{eq:extrepr2} is expressed in terms of the Coleff-Herrera product,
and is given by
\begin{equation} \label{eq:extrepr3}
    [\xi_0] = [h (e_1\wedge \dots \wedge e_p)^*] \mapsto \left[h \dbar \frac{1}{f_p}\wedge \dots \wedge \dbar \frac{1}{f_1}\right],
\end{equation}
see the proof of \cite{DS1}*{Proposition~3.5}.

Thus, when $J$ is a complete intersection ideal, and $G = \Ok/J$, then using the representation \eqref{eq:ext-ci-isom-explicit}
of $\Ext^p(\Ok/J,\Ok)$, and the representation \eqref{eq:lc-current-repr} of $H^p_Z(\Ok)$, the pairing
\eqref{eq:pairing-grothendieck} is given by
\begin{equation} \label{eq:pairing-ch}
    \la g, h (e_1 \wedge \dots \wedge e_p)^* \ra = g h \dbar\frac{1}{f_p}\wedge \dots \wedge \dbar \frac{1}{f_1}.
\end{equation}

\begin{ex} \label{ex:ci-morphism}
If $I = J(f_1,\dots,f_p)$, and $J = J(g_1,\dots,g_p)$ are two complete intersection ideals of codimension $p$,
then the fact that $I \subseteq J$ is equivalent to the existence of a holomorphic $p\times p$-matrix $A$ such that
$f = A g$. The fact that $I \subseteq J$ means that one has the natural surjection $\pi : \Ok/I \to \Ok/J$.
If one lets $(E,\varphi) = (\bigwedge \Ok^p,\delta_g)$ and $(K,\psi) = (\bigwedge \Ok^p,\delta_f)$ be
the Koszul complexes of $(g_1,\dots,g_p)$ and $(f_1,\dots,f_p)$ respectively, then it is straightforward
to verify that one choice of the morphism $a : (K,\psi) \to (E,\varphi)$ is $a_k := \bigwedge^k A : \bigwedge^k \Ok^p \to \bigwedge^k \Ok^p$.
Hence, using the representation \eqref{eq:ext-ci-isom-explicit} of $\Ext^p(\Ok/I,\Ok)$ and $\Ext^p(\Ok/J,\Ok)$, the
morphism $\Ext^p(\Ok/J,\Ok) \to \Ext^p(\Ok/I,\Ok)$ induced by $\pi : \Ok/I \to \Ok/J$ is given by multiplication
with $a_p = \det A$.
\end{ex}

Using the functoriality of the pairing \eqref{eq:pairing-grothendieck}, and combining this with the expression
\eqref{eq:pairing-ch} for the pairing in the particular case when $g = h = 1$, one gets that
\begin{equation}  \label{eq:transformation-law}
    \dbar \frac{1}{g_p}\wedge \cdots \wedge \dbar \frac{1}{g_1} = (\det A) \dbar \frac{1}{f_p}\wedge \cdots \wedge \dbar \frac{1}{f_1}
\end{equation}
as cohomology classes. Indeed, the \emph{transformation law} for Coleff-Herrera products as proven in \cite{DS1} or \cite{DS2} states that this holds
even as currents.

\section{Reduction to the complete intersection case and linkage} \label{sect:linkage}

It is well-known that for any ideal $J$ of codimension $p$, one can find a complete intersection
ideal $I = J(f_1,\dots,f_p)$ of codimension $p$ contained in $J$, for a proof, see for example
\cite{LarDua}*{Lemma~19}. As a consequence of this well-known fact, we have the following generalization,
which we will make use of in order to reduce properties for the pairing in Theorem~\ref{thm:local-duality}
to the complete intersection case in the previous section.

\begin{lma} \label{lma:complete-intersection-surjective}
    Let $G$ be a finitely generated $\Ok$-module. Then, there exists
    a complete intersection ideal $I$ of codimension $p$, and a
    morphism $\alpha : (\Ok/I)^r \to G$ for some $r \in \N$,
    which is surjective onto $G_{(p)}$.
\end{lma}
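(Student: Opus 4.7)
The plan is to reduce the statement to the cited fact that any ideal of codimension $\geq p$ in $\Ok$ contains a complete intersection ideal of codimension $p$. The basic idea is to pick generators of the submodule $G_{(p)}$, take their common annihilator, and find the desired $I$ inside it.

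First I would use that $\Ok$ is Noetherian and $G$ finitely generated, so the submodule $G_{(p)} \subseteq G$ is itself finitely generated. Choose generators $g_1,\dots,g_r$ of $G_{(p)}$ and set
\begin{equation*}
    J \;:=\; \ann(G_{(p)}) \;=\; \bigcap_{i=1}^r \ann(g_i).
\end{equation*}
Since $g_i \in G_{(p)}$, the support $\supp g_i = V(\ann g_i)$ has codimension $\geq p$, and hence $V(J) = \bigcup_i \supp g_i = \supp G_{(p)}$ has codimension $\geq p$, so $\codim J \geq p$.

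Next I would invoke \cite{LarDua}*{Lemma~19} to produce a regular sequence $f_1,\dots,f_p \in J$, generating a complete intersection ideal $I = J(f_1,\dots,f_p) \subseteq J$ of codimension $p$. Because $I \subseteq \ann g_i$ for each $i$, multiplication by $g_i$ descends to a well-defined morphism $\Ok/I \to G$ sending $1$ to $g_i$. Assembling these into
\begin{equation*}
    \alpha : (\Ok/I)^r \to G, \qquad (h_1,\dots,h_r) \mapsto \sum_{i=1}^r h_i g_i,
\end{equation*}
one has $\im \alpha = \sum_i \Ok g_i = G_{(p)}$, giving the required surjection onto $G_{(p)}$.

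There is no substantive obstacle here; the argument is essentially immediate once the cited existence of a complete intersection inside an ideal of codimension $\geq p$ is available. The only conceptual point worth stating is that, although $G_{(p)}$ itself need not be cyclic, the codimension hypothesis passes from each generator $g_i$ to the common annihilator $J$, which is precisely what is needed to find a single ideal $I$ that simultaneously kills all $g_i$.
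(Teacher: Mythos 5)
Your proof is correct and follows essentially the same approach as the paper: identify that $\ann G_{(p)}$ has codimension at least $p$, invoke the cited lemma to find a complete intersection ideal $I$ of codimension $p$ inside it, and factor a surjection $\Ok^r \twoheadrightarrow G_{(p)}$ through $(\Ok/I)^r$. The only difference is cosmetic — you make explicit the identification $\ann G_{(p)} = \bigcap_i \ann g_i$, while the paper leaves that step implicit.
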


We will use this to give a rather elementary proof of the non-degeneracy in
Theorem~\ref{thm:local-duality} when $G$ is of the form $G = \Ok/J$, where $J$ is an
ideal of codimension $\geq p$ by means of the theory of linkage. We will also use the non-degeneracy in
Theorem~\ref{thm:local-duality} to prove Theorem~\ref{thm:colon-modules}.

\begin{proof}
    Since $\ann G_{(p)}$ has codimension $p$, as explained above, we can then find a complete intersection
    ideal $I$ of codimension $p$ contained in $\ann G_{(p)}$. Since $G_{(p)}$ is finitely generated, there
    exists a surjective morphism $\pi : \Ok^r \to G_{(p)}$ for some $r \in \N$. Since $I \subseteq \ann G_{(p)}$,
    $\pi$ induces the surjective morphism $\alpha' : (\Ok/I)^r \to G_{(p)}$, and composing this with
    the inclusion $G_{(p)} \subseteq G$, we obtain the desired morphism $\alpha$.
\end{proof}

The following result about vanishing of $\Ext$, follows from \cite{Eis}*{Proposition~18.4}, and we will use it both
in this section, in the partial proof of Theorem~\ref{thm:local-duality} and it will also be an important part
in the full proof of Theorem~\ref{thm:local-duality-general} in Section~\ref{sect:general-pairing}.

\begin{prop} \label{prop:ext-vanishing}
    Let $G$ and $H$ be finitely generated $\Ok$-modules, and assume that $\ann G + \ann H \neq \Ok$.
    Then $$\depth(\ann G,H) = \min\{ r \mid \Ext^r(G,H) \neq 0 \}.$$
    In particular, if $G$ is a finitely generated $\Ok$-module of codimension $p$,
    then $\Ext^r(G,\Ok) = 0$ for $r < p$.
\end{prop}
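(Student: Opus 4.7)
The plan is to follow the standard proof of the Rees-type depth sensitivity of $\Ext$, which is exactly Eisenbud \cite{Eis}*{Proposition~18.4}. Set $d := \depth(\ann G, H)$ and proceed by induction on $d$. The hypothesis $\ann G + \ann H \neq \Ok$ places us in the usual local setup, and is preserved when passing to $H/fH$ below since $\ann(H/fH) \supseteq \ann H$.

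For the inductive step, choose an $H$-regular element $f \in \ann G$, so that $\depth(\ann G, H/fH) = d-1$. Applying $\Ext(G, -)$ to the short exact sequence $0 \to H \xrightarrow{f} H \to H/fH \to 0$ yields a long exact sequence, and since $f \in \ann G$ the multiplication-by-$f$ maps on each $\Ext^r(G, H)$ are zero (they factor through $G \xrightarrow{f} G = 0$). The long exact sequence therefore collapses into short exact sequences
\begin{equation*}
    0 \to \Ext^{r-1}(G, H) \to \Ext^{r-1}(G, H/fH) \to \Ext^r(G, H) \to 0.
\end{equation*}
By the inductive hypothesis applied to $H/fH$, the middle term vanishes for $r-1 < d-1$, forcing $\Ext^r(G, H) = 0$ for $r < d$; taking $r = d$, the left term vanishes while the middle is nonzero, giving $\Ext^d(G, H) \cong \Ext^{d-1}(G, H/fH) \neq 0$.

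The base case $d = 0$ reduces to showing $\Hom(G, H) \neq 0$. Since $\ann G$ consists of $H$-zero-divisors, prime avoidance applied to $\Ass(H)$ produces an associated prime $P$ of $H$ with $\ann G \subseteq P$; in particular $P \in \supp G$, so $G_P \neq 0$. Localising at $P$, the relation $P \in \Ass(H)$ yields an embedding $\kappa(P) \hookrightarrow H_P$, and Nakayama provides a surjection $G_P \to \kappa(P)$; composing gives a nonzero element of $\Hom_{\Ok_P}(G_P, H_P) \cong \Hom_\Ok(G, H)_P$, whence $\Hom(G, H) \neq 0$.

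The main obstacle is this base case, where one has to coax a genuine morphism $G \to H$ out of associated-prime data by localising; the inductive step is then essentially formal. For the ``in particular'' assertion, take $H = \Ok$: since $\Ok$ is regular and hence Cohen--Macaulay, $\depth(\ann G, \Ok) = \codim(\ann G) = \codim G = p$, and the first part gives $\Ext^r(G, \Ok) = 0$ for $r < p$.
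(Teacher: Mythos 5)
Your proof is correct and is the standard argument underlying Eisenbud's Proposition~18.4, which the paper simply cites without reproving; the derivation of the ``in particular'' clause from Cohen--Macaulayness of $\Ok$ matches the paper's one-line remark after the proposition. One small slip in the write-up: the parenthetical justification ``since $\ann(H/fH) \supseteq \ann H$'' for the preservation of the hypothesis $\ann G + \ann H \neq \Ok$ points in the wrong direction---enlarging one summand only makes the sum $\ann G + \ann(H/fH)$ bigger, which is the opposite of what you want. The actual reason the hypothesis persists is simpler: $\Ok$ is local, so as soon as $G \neq 0$ and $H/fH \neq 0$ (the latter by Nakayama, since $f \in \ann G \subseteq \mathfrak{m}$), both annihilators are proper ideals and their sum lies in $\mathfrak{m}$. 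The remainder---the base case via prime avoidance on $\Ass(H)$ and localization at an associated prime, and the inductive step via the collapse of the long exact sequence because $f$ acts as zero on $\Ext^\bullet(G,H)$---is sound.
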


This last part is indeed a consequence of the first part, since when $H = \Ok$,
which is Cohen-Macaulay, then $\depth(\ann(G),\Ok) = \codim G$.

\begin{lma} \label{lma:HpZW}
    Let $Z \subseteq W$ be two subvarieties of $(\C^n,0)$ of pure codimension $p$.
    Then the induced map
    \begin{equation} \label{eq:HpZW}
        H^p_Z(\Ok) \to H^p_W(\Ok)
    \end{equation}
    is injective.
\end{lma}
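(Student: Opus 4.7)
The plan is to decompose $W$ into irreducible components and apply the Mayer--Vietoris sequence for local cohomology. At the level of germs, write $W = W_1 \cup \dots \cup W_m$ as a union of irreducible components, each of codimension $p$ by the pure codimension hypothesis. Since $Z \subseteq W$ is itself of pure codimension $p$, every irreducible component of $Z$ is contained in some $W_j$ (by irreducibility), and since both are irreducible of the same codimension $p$, the component of $Z$ must equal $W_j$. Hence $Z$ is precisely the union of some of the components of $W$, and we may write $W = Z \cup Z'$ with $Z'$ the union of the remaining components. Both $Z$ and $Z'$ have pure codimension $p$, and distinct irreducible components of codimension $p$ intersect in codimension $\geq p+1$, so $\codim(Z \cap Z') \geq p+1$.

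Next, I would invoke the Mayer--Vietoris long exact sequence in local cohomology, which arises from the short exact sequence of left exact functors
\begin{equation*}
0 \to \Gamma_{Z \cap Z'} \to \Gamma_Z \oplus \Gamma_{Z'} \to \Gamma_{Z \cup Z'} \to 0,
\end{equation*}
with maps $a \mapsto (a,a)$ and $(a,b) \mapsto a - b$. Applied to $\Ok$, this gives
\begin{equation*}
\cdots \to H^p_{Z \cap Z'}(\Ok) \to H^p_Z(\Ok) \oplus H^p_{Z'}(\Ok) \xrightarrow{\;\Phi\;} H^p_W(\Ok) \to \cdots,
\end{equation*}
where $\Phi(a,b) = i_\ast a - j_\ast b$ for $i: Z \subseteq W$, $j: Z' \subseteq W$.

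The key step is then the vanishing $H^p_{Z \cap Z'}(\Ok) = 0$. Writing it as the direct limit $\varinjlim_t \Ext^p(\Ok/I(Z \cap Z')^t,\Ok)$, each $\Ok/I(Z \cap Z')^t$ has codimension $\geq p+1$, so the Ext vanishes by Proposition~\ref{prop:ext-vanishing}. Exactness then forces $\Phi$ to be injective; restricting to the first summand gives the desired injectivity of $H^p_Z(\Ok) \to H^p_W(\Ok)$.

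The only mildly subtle point is justifying the Mayer--Vietoris sequence in this setting, but it is standard (and found in \cite{24Loc}); the conceptual heart of the argument is identifying the decomposition $W = Z \cup Z'$ that forces $Z \cap Z'$ to have strictly larger codimension, so the vanishing already proven becomes available.
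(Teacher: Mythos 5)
Your argument is correct and takes a genuinely different route from the paper's. The paper's proof is more direct: taking ideals $I \subseteq J$ defining $W$ and $Z$, it considers the short exact sequence $0 \to J^t(\Ok/I^t) \to \Ok/I^t \to \Ok/J^t \to 0$; since $J^t(\Ok/I^t)$ has codimension $\geq p$, Proposition~\ref{prop:ext-vanishing} kills $\Ext^{p-1}$ and the long exact $\Ext$-sequence yields an injection $\Ext^p(\Ok/J^t,\Ok) \to \Ext^p(\Ok/I^t,\Ok)$, and similarly for the transition maps $\Ext^p(\Ok/J^t,\Ok) \to \Ext^p(\Ok/J^s,\Ok)$ in the direct system, from which injectivity in the limit follows. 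Your route instead decomposes $W$ into irreducible components, identifies $Z$ as a union of some of them, and feeds the resulting codimension jump of $Z \cap Z'$ into Mayer--Vietoris. Both proofs ultimately rest on the same $\Ext$-vanishing, but the paper's version bypasses the geometric decomposition and the Mayer--Vietoris machinery, and as a result is both shorter and more self-contained. Your version has the virtue of making the geometry visible: $Z$ really is a subcollection of the components of $W$, and the obstruction lives in strictly higher codimension.

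One point to tighten in your write-up: the sequence $0 \to \Gamma_{Z\cap Z'} \to \Gamma_Z \oplus \Gamma_{Z'} \to \Gamma_{Z\cup Z'} \to 0$ is \emph{not} exact on the right as a sequence of functors on all modules. For instance, with $Z=\{x=0\}$, $Z'=\{y=0\}$ in $(\C^2,0)$ and $M = \Ok/(xy)$, one has $\Gamma_Z(M)=(y)/(xy)$, $\Gamma_{Z'}(M)=(x)/(xy)$, while $\Gamma_{Z\cup Z'}(M)=M$; the image of the difference map is only $(x,y)/(xy)$. The Mayer--Vietoris long exact sequence is nevertheless valid, but the standard derivation applies the three $\Gamma$-functors to an injective resolution of $M$ and uses that the sequence \emph{is} exact when evaluated on injective modules (by the structure theory of indecomposable injectives $E(R/\mathfrak{p})$ over a Noetherian ring). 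So the conclusion stands, but the phrase ``short exact sequence of left exact functors'' should not be read literally; either cite the Mayer--Vietoris sequence as a black box, or phrase the derivation via injective resolutions.
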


\begin{proof}
    We let $J=J_Z$ and $I=J_W$ be the ideals of holomorphic functions vanishing on $Z$ and $W$ respectively. Thus, $I \subseteq J$,
    and we have a short exact sequence
    \begin{equation*}
        0 \to J^t (\Ok/I^t) \to \Ok/I^t \to \Ok/J^t \to 0.
    \end{equation*}
    Since $J^t (\Ok/I^t)$ has codimension $\geq p$, we get by Proposition~\ref{prop:ext-vanishing}
    and the long exact sequence of $\Ext$ an injection
    \begin{equation*}
        0 \to \Ext^p(\Ok/J^t,\Ok) \to \Ext^p(\Ok/I^t,\Ok).
    \end{equation*}
    Similarly, if $s > t$, we have an injection
    \begin{equation*}
        0 \to \Ext^p(\Ok/J^t,\Ok) \to \Ext^p(\Ok/J^s,\Ok),
    \end{equation*}
    and these two injections together give the injectivity of \eqref{eq:HpZW}.
\end{proof}

\subsection{Partial proofs of Theorem~\ref{thm:local-duality} and Theorem~\ref{thm:colon-modules}} \label{ssect:ci-reduction}

If $I$ is a complete intersection ideal of codimension $p$, and $G = \Ok/I$, then we have an explicit
expression for the pairing \eqref{eq:local-duality-pairing} as given by \eqref{eq:pairing-grothendieck-ci}
or by \eqref{eq:pairing-ch} depending on how one represents $H^p_Z(\Ok)$.

We can now rather easily obtain non-degeneracy in the second argument for general modules of codimension $\geq p$.

\begin{lma} \label{lma:reduction-second}
    Let $G$ be a finitely generated $\Ok$-module of codimension $\geq p$, and let $Z \supseteq \supp G$
    be of pure codimension $p$. Consider a pairing
    \begin{equation*}
        G \times \Ext^p(G,\Ok) \to H^p_Z(\Ok),
    \end{equation*}
    which is functorial in $G$. If the pairing is non-degenerate in the second argument for $G$ of the form
    $G = \Ok/I$, where $I$ is any complete intersection ideal of codimension $p$, then it is non-degenerate
    in the second argument for any finitely generated $\Ok$-module.
\end{lma}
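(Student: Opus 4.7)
The plan is to reduce non-degeneracy for general $G$ to the assumed complete intersection case by means of a surjection $(\Ok/I)^r \twoheadrightarrow G$ produced by Lemma~\ref{lma:complete-intersection-surjective}. If $\codim G > p$ then $\Ext^p(G,\Ok) = 0$ by Proposition~\ref{prop:ext-vanishing} and there is nothing to prove, so assume $\codim G = p$. Since $\codim G \geq p$ every element of $G$ has support of codimension $\geq p$, hence $G = G_{(p)}$, and Lemma~\ref{lma:complete-intersection-surjective} yields a complete intersection ideal $I$ of codimension $p$ together with a surjection $\alpha : (\Ok/I)^r \twoheadrightarrow G$ for some $r$.

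The central step is to show that the induced map $\alpha^* : \Ext^p(G,\Ok) \to \Ext^p((\Ok/I)^r,\Ok)$ is injective. Setting $K = \ker \alpha$, the short exact sequence $0 \to K \to (\Ok/I)^r \to G \to 0$ gives the long exact sequence
$$\Ext^{p-1}(K,\Ok) \to \Ext^p(G,\Ok) \xrightarrow{\alpha^*} \Ext^p((\Ok/I)^r,\Ok).$$
Since $\Ok/I$ is Cohen-Macaulay of codimension $p$, the module $(\Ok/I)^r$ has pure codimension $p$, and so every submodule $K$ satisfies $\codim K \geq p$. Proposition~\ref{prop:ext-vanishing} then forces $\Ext^{p-1}(K,\Ok) = 0$, yielding the desired injectivity of $\alpha^*$.

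Given a nonzero $\xi \in \Ext^p(G,\Ok)$, injectivity of $\alpha^*$ together with the decomposition $\Ext^p((\Ok/I)^r,\Ok) \cong \Ext^p(\Ok/I,\Ok)^r$ produces a nonzero component $\xi_i \in \Ext^p(\Ok/I,\Ok)$. The assumed non-degeneracy for $\Ok/I$ supplies $h \in \Ok/I$ with $\la h, \xi_i \ra \neq 0$. Placing $h$ in the $i$-th slot and zero elsewhere gives $\tilde h \in (\Ok/I)^r$, and applying the functoriality of the pairing first along the inclusion of the $i$-th factor $\Ok/I \hookrightarrow (\Ok/I)^r$ and then along $\alpha$ yields
$$\la \alpha(\tilde h), \xi \ra = \la \tilde h, \alpha^* \xi \ra = \la h, \xi_i \ra \neq 0,$$
so $g := \alpha(\tilde h) \in G$ witnesses non-degeneracy in the second argument.

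The principal obstacle is the injectivity of $\alpha^*$, which relies precisely on the pure codimension of $(\Ok/I)^r$ together with the $\Ext$-vanishing of Proposition~\ref{prop:ext-vanishing}; without the complete intersection structure built into the target this step would fail. A minor technical point is that the functoriality diagram \eqref{eq:functorial} requires a single subvariety of pure codimension $p$ containing both $\supp G$ and $V(I) = \supp (\Ok/I)^r$; this is handled by enlarging $Z$ to $Z \cup V(I)$ and using the injection supplied by Lemma~\ref{lma:HpZW}, as the remark after Theorem~\ref{thm:local-duality} makes the choice of $Z$ immaterial.
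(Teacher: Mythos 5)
Your proof is correct and is essentially the paper's argument: both reduce to the surjection $\alpha:(\Ok/I)^r\to G$ supplied by Lemma~\ref{lma:complete-intersection-surjective}, establish injectivity of $\alpha^*$ via the long exact sequence of $\Ext$ together with the vanishing from Proposition~\ref{prop:ext-vanishing}, and then invoke functoriality. The only cosmetic differences are that you explicitly produce a witness element rather than phrasing it contrapositively, you add an unneeded reduction to $\codim G = p$ (the paper simply notes $G=G_{(p)}$ so $\alpha$ is already onto $G$), and you spell out the enlargement of $Z$ via Lemma~\ref{lma:HpZW}, a small point the paper leaves implicit.
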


\begin{proof}
    Note first that if the pairing is non-degenerate for $\Ok/I$, then by functoriality, it is also
    non-degenerate for $F = (\Ok/I)^r$. Take now $\alpha : (\Ok/I)^r \to G$ as in Lemma~\ref{lma:complete-intersection-surjective}.
    Assume that $\la g, \xi \ra = 0$ for all $\xi \in \Ext^p(G,\Ok)$. By functoriality and the non-degeneracy for
    $(\Ok/I)^r$, we get that $\alpha^* \xi = 0$.

    We have an exact sequence
    \begin{equation*}
        0 \to H \to (\Ok/I)^r \to G \to 0,
    \end{equation*}
    induced by $\alpha$, where $H$ has codimension $\geq p$. Thus, by the long exact sequence of $\Ext$
    associated to this short exact sequence, and the fact that $\Ext^{p-1}(H,\Ok) = 0$
    by Proposition~\ref{prop:ext-vanishing}, we get an injection
    \begin{equation} \label{eq:piinj}
        \alpha^* :  \Ext^p(G,\Ok) \to \Ext^p( (\Ok/I)^r, \Ok).
    \end{equation}
    Since $\alpha^* \xi = 0$, we thus conclude that $\xi = 0$.
\end{proof}

In order to prove non-degeneracy in the first argument of the pairing, we begin with the following lemma,
generalizing the argument in the proof of Lemma~\ref{lma:ci-nondeg}.

\begin{lma} \label{lma:hom-to-extI}
    Let $H$ be a finitely generated $\Ok$-module of codimension $\geq p$,
    and let $J \subseteq \ann H$ be such that $Z = Z(J)$ has pure codimension $p$.
    Then
    \begin{equation} \label{eq:hom-to-extI}
        \Hom(H,H^p_Z(\Ok)) \cong \Hom(H,\Ext^p(\Ok/J,\Ok)).
    \end{equation}
\end{lma}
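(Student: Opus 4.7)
The plan is to prove the stronger assertion that the natural morphism $\Ext^p(\Ok/J,\Ok) \to H^p_Z(\Ok)$ is injective with image equal to the $J$-torsion submodule $H^p_Z(\Ok)[J] := \{\mu \in H^p_Z(\Ok) : J\mu = 0\}$, and then deduce the lemma. Indeed, since $J \subseteq \ann H$, every $\Ok$-linear map $\phi \colon H \to M$ into an $\Ok$-module $M$ satisfies $J\phi(H) = \phi(JH) = 0$, so $\phi$ factors through $M[J]$, whence $\Hom(H,M) = \Hom(H, M[J])$. Taking $M = H^p_Z(\Ok)$ and inserting the identification $H^p_Z(\Ok)[J] \cong \Ext^p(\Ok/J, \Ok)$ yields the desired isomorphism.

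Injectivity of the natural map follows by applying $\Hom(-,\Ok)$ to the short exact sequence $0 \to J/J^t \to \Ok/J^t \to \Ok/J \to 0$. Since $J/J^t$ is supported on $Z$ and hence has codimension $\geq p$, Proposition~\ref{prop:ext-vanishing} gives $\Ext^{p-1}(J/J^t, \Ok) = 0$, so the induced long exact sequence begins
\begin{equation*}
    0 \to \Ext^p(\Ok/J, \Ok) \to \Ext^p(\Ok/J^t, \Ok),
\end{equation*}
and the injection persists in the direct limit $H^p_Z(\Ok) = \varinjlim_t \Ext^p(\Ok/J^t, \Ok)$. The image is contained in the $J$-torsion because $\Ext^p(\Ok/J, \Ok)$ is naturally an $\Ok/J$-module.

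The main obstacle is showing that every $J$-torsion class in $H^p_Z(\Ok)$ lies in the image of the natural map. For this I would invoke the hypercohomology (Grothendieck) spectral sequence
\begin{equation*}
    E_2^{a,b} = \Ext^a(\Ok/J, H^b_Z(\Ok)) \Longrightarrow \Ext^{a+b}(\Ok/J, \Ok),
\end{equation*}
which is valid because $\Ok/J$ is $Z$-torsion. By Cohen--Macaulayness of $\Ok$ and pure codimension $p$ of $Z$, one has $H^b_Z(\Ok) = 0$ for $b < p$. Consequently $E_2^{i,p-i} = 0$ for $i > 0$, every differential entering or leaving $E_r^{0,p}$ lands in such a vanishing term for $r \geq 2$, and the only surviving entry on the total degree $p$ diagonal is $E_\infty^{0,p} = E_2^{0,p} = \Hom(\Ok/J, H^p_Z(\Ok)) = H^p_Z(\Ok)[J]$. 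The edge morphism therefore produces an isomorphism $\Ext^p(\Ok/J, \Ok) \cong H^p_Z(\Ok)[J]$, and unwinding its construction identifies it with the natural morphism. A more elementary alternative would bypass the spectral sequence by proving inductively on $t$, using the filtration of $J/J^t$ by the $\Ok/J$-modules $J^i/J^{i+1}$ together with Proposition~\ref{prop:ext-vanishing}, that the connecting map $\Ext^p(\Ok/J^t, \Ok) \to \Ext^p(J/J^t, \Ok)$ vanishes on the $J$-torsion; this gives surjectivity onto the $J$-torsion at each finite level and hence in the limit.
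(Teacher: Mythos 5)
Your proof is correct in spirit but takes a genuinely different route from the paper's. The paper fixes $H$ and shows, for each $t$, that the map $\Hom(H,\Ext^p(\Ok/J,\Ok)) \to \Hom(H,\Ext^p(\Ok/J^t,\Ok))$ is an isomorphism: injectivity from the long exact sequence of $\Ext$ applied to $0 \to J/J^t \to \Ok/J^t \to \Ok/J \to 0$ (using Proposition~\ref{prop:ext-vanishing} as you do), and surjectivity by an argument with an injective resolution of $\Ok$ in the second variable showing that a $J$-annihilated class in $\Ext^p(\Ok/J^t,\Ok)$ dies in $\Ext^p(J/J^t,\Ok)$. You instead isolate the stronger and more structural fact that the natural morphism $\Ext^p(\Ok/J,\Ok)\to H^p_Z(\Ok)$ is an isomorphism onto the $J$-torsion, and derive the lemma by the triviality $\Hom(H,M)=\Hom(H,M[J])$. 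This is a clean strengthening, and the Grothendieck spectral sequence for $\Hom(\Ok/J,-)\circ\Gamma_Z$ degenerates exactly as you say, but it requires an extra input that the paper avoids: the depth-sensitivity vanishing $H^b_Z(\Ok)=0$ for $b<p$ (i.e.\ $\depth_J\Ok=\codim J=p$ since $\Ok$ is Cohen--Macaulay). Two points to tighten: (i) the justification ``valid because $\Ok/J$ is $Z$-torsion'' should be spelled out --- the real reason the composite spectral sequence applies is that $\Gamma_Z$ preserves injectives, which are $\Hom(\Ok/J,-)$-acyclic, and $\Hom(\Ok/J,-)=\Hom(\Ok/J,-)\circ\Gamma_Z$; (ii) the assertion that ``unwinding the construction identifies the edge morphism with the natural map'' is true but nontrivial, and since the paper later uses the lemma through the natural comparison $\pi_t$, that identification is not cosmetic; simply having \emph{some} abstract isomorphism $\Ext^p(\Ok/J,\Ok)\cong H^p_Z(\Ok)[J]$ plus injectivity of the natural map does not by itself force the natural map to be surjective (injective self-maps of modules need not be onto). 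Your sketched ``elementary alternative'' via the filtration of $J/J^t$ is essentially the paper's route and would need the same care about why $J$-torsion classes die in $\Ext^p(J/J^t,\Ok)$.
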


\begin{proof}
    It is enough to prove that
    \begin{equation} \label{eq:Jt-isomorphism}
        \Hom(H,\Ext^p(\Ok/J,\Ok)) \stackrel{\cong}{\to} \Hom(H,\Ext^p(\Ok/J^t,\Ok))
    \end{equation}
    for any $t \geq 1$. Consider for $t \geq 1$ the short exact sequence
    \begin{equation*}
        0 \to J (\Ok/J^t) \to \Ok/J^t \to \Ok/J \to 0.
    \end{equation*}
    By the long exact sequence of $\Ext$, and Proposition~\ref{prop:ext-vanishing}, we have an exact sequence
    \begin{equation*}
        0 \to \Ext^p(\Ok/J,\Ok) \to \Ext^p(\Ok/J^t,\Ok) \to \Ext^p(J(\Ok/J^t),\Ok).
    \end{equation*}
    Hence, by left exactness of $\Hom(H,\bullet)$, \eqref{eq:Jt-isomorphism} is injective, and it
    remains to prove that it is surjective.
    Consider thus $\beta \in \Hom(H,\Ext^p(\Ok/J^t,\Ok))$. If $h \in H$, then
    $J \beta(h) = 0$. If one represents $\Ext$ with the help of an injective resolution
    in the second argument, one sees that the image of $\beta(h)$ in $\Ext^p(J(\Ok/J^t),\Ok)$ is $0$,
    so $\beta(h)$ lifts to a unique element in $\Ext^p(\Ok/J,\Ok)$, and $\beta$ thus lifts to
    a morphism $\Hom(H,\Ext^p(\Ok/J,\Ok))$.
\end{proof}

If we take $H = \Ext^p(G,\Ok)$ in Lemma~\ref{lma:hom-to-extI}, it is thus enough to prove non-degeneracy in the first argument
for the Yoneda pairing, without composing with $\pi_t$ from \eqref{eq:pi-t}.

We will also use the following alternative description of the module $I : J$ which
appears in Theorem~\ref{thm:colon-modules}. To begin with, we set the notation
which we will use throughout the rest of this section.
We assume that $G$ is a finitely generated $\Ok$-module of codimension $\geq p$.
We let
\begin{equation*}
    \alpha : (\Ok/I)^r \to G
\end{equation*}
be a surjective morphism as in Lemma~\ref{lma:complete-intersection-surjective},
where $I$ is a complete intersection ideal of codimension $p$ contained in $\ann G$.
Let $(L,\delta_f)$ be the Koszul complex of a minimal set $(f_1,\dots,f_p)$ of generators
of $I$, and we let $e_1,\dots,e_p$ be the standard basis of $\Ok^r$, so that $e := e_1 \wedge \dots \wedge e_p$
is a basis element of $L_p$, the $p$-th term in the complex.
We let $(K,\psi) = (L \otimes \Ok^r, \delta_f \otimes \Id_{\Ok^r})$ be the direct sum of $r$ copies of $(L,\delta_f)$,
which is a free resolution of $(\Ok/I)^r$, and finally, we let
\begin{equation*}
    a : (K,\psi) \to (E,\varphi)
\end{equation*}
be a morphism of complexes extending $\alpha$, as in Proposition~\ref{prop:comparison-morphism}.
To summarize, we have a commutative diagram as follows:
\begin{equation*}
\xymatrix{
    E_{p+1} \ar[r]^{\varphi_{p+1}} & E_p \ar[rr]^{\varphi_p} &  & \cdots  \ar[rr]^{\varphi_1} & & E_0 \ar[r] & G \cong \Ok^r/J \\
    0 \ar[r] & \ar[u]^{a_p} K_p = L_p \otimes \Ok^r \ar[rr]^{\delta_f \otimes \Id_{\Ok^r}} & \hspace{0.2cm} & \cdots \ar[rr]^{\delta_f \otimes \Id_{\Ok^r}} & \hspace{1pt} & L_0 \otimes \Ok^r \ar[u]^{a_0}  \ar[r] & (\Ok/I)^r \ar[u]^{\alpha}.
}
\end{equation*}

\begin{lma} \label{lma:colon}
    Let $G \cong \Ok^r/J$, where $G$ has codimension $\geq p$, and let $(E,\varphi)$, $(K,\psi)$, $(L,\delta_f)$
    and $a : (K,\psi) \to (E,\varphi)$ be as above.
    Then,
    \begin{equation*}
        I : (L_p \otimes J) = I K_p^* + (\ker \varphi_{p+1}^*) a_p.
    \end{equation*}
\end{lma}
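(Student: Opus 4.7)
The plan is to fix the identification $K_p^* \cong (\Ok^r)^*$ afforded by the basis $e = e_1 \wedge \dots \wedge e_p$ of $L_p$; under it, $L_p \otimes J$ corresponds to $J \subseteq \Ok^r$, and a short Koszul computation identifies $\im \psi_p^*$ with $IK_p^*$. The statement thus becomes: $g' \in (\Ok^r)^*$ satisfies $g'(J) \subseteq I$ if and only if $g' \in I(\Ok^r)^* + a_p^*(\ker \varphi_{p+1}^*)$. A structural fact I will use repeatedly is the vanishing $L_{p+1} = 0$, which kills the top homotopy piece in both directions of the argument.

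For the inclusion $\supseteq$, the containment $IK_p^* \subseteq I : (L_p \otimes J)$ is immediate. For the other summand, fix $j = (j_1,\dots,j_r) \in J$ and $\eta \in \ker \varphi_{p+1}^*$. The multiplication $\mu_j : \Ok \to \Ok^r$ descends to $\bar\mu_j : \Ok/I \to (\Ok/I)^r$, and $\alpha \bar\mu_j = 0$ since $\pi(j) = 0$. I would lift $\bar\mu_j$ to the explicit chain map $m_{j,k}(\ell) := \sum_i j_i (\ell \otimes v_i)$; then $a \circ m_j$ extends the zero morphism and is null-homotopic by Proposition~\ref{prop:comparison-morphism}. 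Writing the homotopy at level $p$ and applying $\eta$ kills the $\varphi_{p+1}$-coboundary term, while the Koszul boundary $\delta_{f,p}(e) = \sum_k (-1)^{k-1} f_k \hat e_k$ leaves a sum manifestly in $I$. A short computation identifies $\eta(a_p m_{j,p}(e))$ with $(\eta a_p)(e \otimes j)$, completing this inclusion.

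For $\subseteq$, take $g \in I : (L_p \otimes J)$, identify it with $g' \in (\Ok^r)^*$, and note that $g'(J) \subseteq I$ forces $g'$ to descend to $\bar g'' : G \to \Ok/I$. I would lift $\bar g''$ via Proposition~\ref{prop:comparison-morphism} to a chain map $b : E \to L$ and set $\eta := e^* \circ b_p \in E_p^*$. The chain-map identity $b_p \varphi_{p+1} = \delta_{f,p+1} b_{p+1}$ together with $L_{p+1}=0$ immediately gives $\eta \in \ker \varphi_{p+1}^*$. To compare $g$ with $\eta a_p$ modulo $IK_p^*$, I would construct a direct chain map $c : K \to L$ extending $\bar g'$ by $c_k(\ell \otimes v) := g'(v) \ell$; a check shows $e^* c_p = g$ under the identification. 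Since $c$ and $b \circ a$ both extend $\bar g'$, Proposition~\ref{prop:comparison-morphism} supplies a homotopy $h$, and at level $p$ the top piece $h_p$ vanishes (again from $L_{p+1}=0$). This gives $c_p - b_p a_p = h_{p-1} \psi_p$, and composing with $e^*$ yields an element whose values on basis vectors $e \otimes v_i$ are Koszul sums $\sum_k (-1)^{k-1} f_k e^*(h_{p-1}(\hat e_k \otimes v_i)) \in I$, placing $g - \eta a_p \in IK_p^*$.

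The main obstacle is the bookkeeping for the two homotopies and the Koszul signs; structurally, however, both directions reduce to the same mechanism: $L_{p+1} = 0$ kills the offending top homotopy term, and what survives is a Koszul boundary that lands in $I$.
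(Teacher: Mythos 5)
Your proof is correct and follows essentially the same route as the paper's. Both directions hinge on exactly the same mechanism the paper uses: in the inclusion $\supseteq$, you form the chain map $m_j = \Id_L \otimes j : L \to K$ (the paper's $\Id_L \otimes g$), note that $a \circ m_j$ extends the zero map because $j \in J$, and apply $\eta \in \ker\varphi_{p+1}^*$ to the resulting null-homotopy so that only the Koszul boundary $\im(\delta_f)_p \subseteq I L_{p-1}$ survives; in the inclusion $\subseteq$, you lift the descended map to $b : E \to L$, use $L_{p+1}=0$ to get both $e^* b_p \in \ker\varphi_{p+1}^*$ and the vanishing of the top homotopy piece, compare $ba$ with the explicit chain map $c = \Id_L \otimes g'$ (the paper's $\Id_L \otimes \tilde\gamma$), and land the difference in $IK_p^*$ via the Koszul differential. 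This matches the paper's proof step for step, including the two invocations of Proposition~\ref{prop:comparison-morphism} and the role of $L_{p+1}=0$, so there is nothing to flag beyond cosmetic sign conventions in the homotopy identity.
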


Note that if $G = \Ok/J$ is Cohen-Macaulay of codimension $p$, and $(E,\varphi)$ has length $p$, then
$(\ker \varphi_{p+1}^*) a_p$ is simply the ideal $J(a_p)$ generated by the entries of $a_p$, i.e.,
\begin{equation*}
    I : J = I + J(a_p).
\end{equation*}
In this case, Lemma~\ref{lma:colon} is well-known, and can be found for example in Lemma~3.2 in \cite{FH}.
The following is an adaption of this proof to our more general situation.

\begin{proof}
    We let $M := (\ker \varphi_{p+1}^*) a_p$.
    First of all, we prove that $I K_p^* + M \subseteq I: (L_p \otimes J)$. It is clear that $I K_p^* \subseteq I : (L_p \otimes J)$,
    so we want to prove that $M \subseteq I:(L_p \otimes J)$. Take $g \in J \subseteq \Ok^r$.
    The element $g$ induces a morphism $\Hom( \Ok/I, (\Ok/I)^r)$, which extends to
    the morphism of complexes
    \begin{equation*}
        \Id_L \otimes g : L \to L \otimes \Ok^r \cong K.
    \end{equation*}

    Since $g \in J$, we get that $a ( \Id_L \otimes g ) : (L,\delta_f) \to (E,\varphi)$
    is a morphism of complexes extending the zero morphism $\Ok/I \to G$.
    Thus, by Proposition~\ref{prop:comparison-morphism}, there exists a homotopy
    $s : (L,\delta_f) \to (E,\varphi)$ between $0$ and $a (\Id_L \otimes g)$. In particular,
    \begin{equation} \label{eq:g-homotopy}
        a_p (\Id_{L_p} \otimes g) = s_{p-1} (\delta_f)_p + \varphi_{p+1} s_p.
    \end{equation}
    If $\xi \in \ker \varphi_{p+1}^*$, and we apply this to \eqref{eq:g-homotopy}, we get that
    \begin{equation*}
        \xi a_p ( \Id_{L_p} \otimes g ) = \xi s_{p-1} (\delta_f)_p \subseteq I,
    \end{equation*}
    since $\im (\delta_f)_p \subseteq I L_{p-1}$. Hence, $M \subseteq I : (L_p \otimes J)$.

    Conversely, we consider an element $\gamma \in I : (L_p \otimes J)$. By the isomorphism $L_p \cong \Ok$
    (given by $e^*$, the dual of the basis $e = e_1 \wedge \dots \wedge e_p$ of $L_p$),
    we can consider $\gamma$ as an element $\tilde{\gamma} : \Ok^r \to \Ok$, and we have that
    \begin{equation} \label{eq:gammatilde}
        \gamma = e^*(\Id_{L_p} \otimes \tilde{\gamma} ).
    \end{equation}
    The morphism $\tilde{\gamma}$ descends to a morphism $\Ok^r/J \to \Ok/I$, and by Proposition~\ref{prop:comparison-morphism},
    we can find a morphism of complexes $b : (E,\varphi) \to (L,\delta_f)$ extending this morphism.
    The morphism $\tilde{\gamma}$ also induces a morphism $(\Ok/I)^r \to \Ok/I$, which in turn induces a morphism
    of complexes $(K,\psi) \to (L,\delta_f)$, which is given simply as
    \begin{equation*}
         \Id_L \otimes \tilde{\gamma} : K \cong L \otimes \Ok^r \to L \otimes \Ok \cong L.
    \end{equation*}
    Then, $\Id_L \otimes \tilde{\gamma}$ and $ba$ both extend the morphism $(\Ok/I)^r \to \Ok/I$ induced by
    $\tilde{\gamma}$, so by Proposition~\ref{prop:comparison-morphism}, $b a$ is homotopic to
    $\Id_L \otimes \tilde{\gamma}$, and in particular, there exists $s_{p-1}$ such that
    \begin{equation} \label{eq:ghomot}
        \Id_{L_p} \otimes \tilde{\gamma}- b_p a_p = s_{p-1} \psi_p.
    \end{equation}
    In addition, since $b : (E,\varphi) \to (L,\delta_f)$ is a morphism of complexes, and $L_{p+1} = 0$, we get that
    $\varphi_{p+1}^* b_p = 0$. Thus, $e^* b_p \in \ker \varphi_{p+1}^*$.
    To conclude, applying $e^*$ to \eqref{eq:ghomot}, and using this in combination with \eqref{eq:gammatilde}, we get that
    \begin{equation*}
        \gamma = (e^* b_p) a_p + e^* s_{p-1} \psi_p \subseteq M + I K_p^*,
    \end{equation*}
    and we have proven the other inclusion.
\end{proof}

In order to prove non-degeneracy in the first argument, one cannot as easily reduce non-degeneracy to
the complete intersection case, but using \eqref{eq:link-ideals} from the theory of linkage,
we can do this reduction when $G$ is of the form $G = \Ok/J$, where $G$ has codimension $\geq p$.

In the case of non-pure dimension, we first relate $G^{(p)}$ in Theorem~\ref{thm:local-duality}
and $J_{[p]}$ in \eqref{eq:link-ideals} or \eqref{eq:link-modules}.

\begin{remark} \label{rem:equihull}
    If $G = \Ok^r/J$, where $G$ has codimension $\geq p$, then we claim that
    $(\Ok^r/J)^{(p)} = \Ok^r/J_{[p]}$.
    To see this, note that if $g \in (\Ok^r/J)_{(p+1)}$, then $g \in J_{[p]}$.
    Thus, we get a well-defined surjective map $(\Ok^r/J)^{(p)} \to \Ok^r/J_{[p]}$.
    In addition, it is injective, since if $g = 0$ in $\Ok^r/J_{[p]}$, and if we write
    $J = J_{[p]} \cap J_{[\geq p+1]}$, then $g \in J$ outside of $\supp \Ok^r/J_{[\geq p+1]}$
    which has codimension $\geq p+1$, and thus, $g \in (\Ok^r/J)_{(p+1)}$.
\end{remark}

\begin{lma} \label{lma:reduction-first}
    Let $G = \Ok/J$, where $J$ has codimension $\geq p$, and let $Z \supseteq \supp G$
    be of pure codimension $p$. Consider a pairing
    \begin{equation*}
        G \times \Ext^p(G,\Ok) \to H^p_Z(\Ok),
    \end{equation*}
    which is functorial in $G$. If the descended pairing
    \begin{equation*}
        G/G_{(p+1)} \times \Ext^p(G,\Ok) \to H^p_Z(\Ok),
    \end{equation*}
    is non-degenerate in the second argument for $G$ of the form $G = \Ok/I$, where $I$ is any complete
    intersection ideal of codimension $p$, then it is non-degenerate in the first argument for any
    $G = \Ok/J$, where $J$ has codimension $\geq p$.
\end{lma}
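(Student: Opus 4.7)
The plan is to combine the linkage formula \eqref{eq:link-ideals}, $J_{[p]} = I:(I:J)$, with functoriality of the pairing along cleverly chosen maps. By the fact cited at the start of Section~\ref{sect:linkage}, pick a complete intersection ideal $I \subseteq J$ of codimension $p$. By Remark~\ref{rem:equihull}, non-degeneracy in the first argument for $G = \Ok/J$ amounts to showing: if $g \in \Ok$ satisfies $\la [g]_J, \xi\ra_G = 0$ for all $\xi \in \Ext^p(G,\Ok)$, then $g \in J_{[p]}$.

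The central trick is that for each $h \in I:J$, the relation $Jh \subseteq I$ makes multiplication by $h$ into a well-defined morphism $\bar\mu_h : \Ok/J \to \Ok/I$, so we may apply functoriality with these maps rather than with the more obvious surjection $\pi : \Ok/I \to \Ok/J$. Let $\xi_1 \in \Ext^p(\Ok/I,\Ok)$ correspond to $1 \in \Ok/I$ under \eqref{eq:ext-ci-isom-explicit}. Applying the functoriality diagram \eqref{eq:functorial} to $\bar\mu_h$ (with $F = \Ok/J$, $G = \Ok/I$) gives
\begin{equation*}
    \la [gh]_I, \xi_1 \ra_{\Ok/I} = \la [g]_J, \bar\mu_h^* \xi_1 \ra_{\Ok/J} = 0,
\end{equation*}
the last equality by our hypothesis on $g$. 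By the explicit formula \eqref{eq:pairing-grothendieck-ci} this reads $\pi_1(gh\,\xi_1) = 0$, where $\pi_1 : \Ext^p(\Ok/I,\Ok) \to H^p_Z(\Ok)$. The assumption of non-degeneracy in the second argument for $\Ok/I$, combined with the $\Ok$-linearity $\pi_1(g\eta) = g\pi_1(\eta)$, forces $\pi_1$ to be injective: if $\pi_1(\eta) = 0$ then $\pi_1(g\eta) = 0$ for every $g$, hence $\eta = 0$. Therefore $gh\,\xi_1 = 0$ in $\Ext^p(\Ok/I,\Ok) \cong \Ok/I$, i.e., $gh \in I$. Since this holds for every $h \in I:J$, the linkage formula \eqref{eq:link-ideals} yields $g \in I:(I:J) = J_{[p]}$.

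The main obstacle to a more naive strategy is that one is initially tempted to use the surjection $\pi : \Ok/I \to \Ok/J$ and identify the image of $\pi^* : \Ext^p(\Ok/J,\Ok) \to \Ext^p(\Ok/I,\Ok) \cong \Ok/I$ with precisely $(I:J)/I$; verifying this identification requires analyzing the connecting map in the long exact sequence and is not automatic from $J$-annihilation alone. The argument above bypasses this: rather than parametrizing a single family of $\Ext$-classes by $\pi^*$, we instead vary the map $\alpha$ in \eqref{eq:functorial} over the family $\{\bar\mu_h\}_{h \in I:J}$. This turns the hypothesis into the collection of relations $gh \in I$ for all $h \in I:J$, which is exactly the input needed by \eqref{eq:link-ideals} to conclude $g \in J_{[p]}$.
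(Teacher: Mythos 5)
Your proof is correct but follows a genuinely different route from the paper's. The paper fixes a single comparison morphism $a : (K,\psi) \to (E,\varphi)$ extending the surjection $\pi : \Ok/I \to \Ok/J$, applies functoriality to the morphism $\Ok/I \to \Ok/J$, $f \mapsto fg$, to conclude that $\xi_0 a_p g = 0$ in $\Ext^p(\Ok/I,\Ok)$ for every $\xi_0 \in \ker\varphi_{p+1}^*$, and then invokes Lemma~\ref{lma:colon}, which identifies $I + (\ker\varphi_{p+1}^*)a_p$ with $I:J$, to translate this into $g \in I:(I:J)$. You instead run functoriality in the opposite direction, over the family $\bar\mu_h : \Ok/J \to \Ok/I$ for $h \in I:J$, obtaining the relations $gh \in I$ directly; this makes the appeal to the linkage formula \eqref{eq:link-ideals} immediate and bypasses Lemma~\ref{lma:colon} entirely (which the paper proves via a fairly involved homotopy argument). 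Both proofs hinge on \eqref{eq:link-ideals}, as intended. One small imprecision worth fixing: since the lemma concerns an arbitrary functorial pairing and not the Grothendieck pairing specifically, you should not invoke \eqref{eq:pairing-grothendieck-ci} and $\pi_1$ by name; rather, observe that for any $\Ok$-bilinear pairing on the cyclic module $\Ok/I$, the map $\rho(\xi) := \la 1,\xi\ra$ is $\Ok$-linear, the pairing is $(g,\xi) \mapsto \rho(g\xi)$, and non-degeneracy in the second argument is exactly injectivity of $\rho$ (by the argument you give). The rest then goes through verbatim with $\rho$ in place of $\pi_1$, using that $gh\,\xi_1 = 0$ in $\Ext^p(\Ok/I,\Ok) \cong \Ok/I$ is equivalent to $gh \in I$.
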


\begin{proof}
    We let $(E,\varphi)$ be a free resolution of $G = \Ok/J$, and using the representation
    $\Ext^p(G,\Ok) \cong H^p(\Hom(E_\bullet,\Ok))$, we write any $\xi \in \Ext^p(G,\Ok)$
    as $\xi = [\xi_0]$, where $\xi_0 \in \ker \varphi_{p+1}^*$.
    We let $I \subseteq J$ be a complete intersection ideal contained in $J$, $(K,\psi)$
    the Koszul complex of a set of minimal generators of $I$, and let $a : (K,\psi) \to (E,\varphi)$
    be a morphism of complexes extending the natural surjection $\pi : \Ok/I \to \Ok/J$.

    Take $g \in G$ such that $\la g, \xi \ra = 0$ for all $\xi$.
    We want to show that $g \in G_{(p+1)}$.
    For $f \in \Ok/I$, we also get that $\la fg, \xi \ra = 0$ for all $\xi$.
    Thus, by $\Ok$-linearity, functoriality, and using the representation of $\Ext$ above, we get that
    \begin{equation*}
        \la f, [a_p^* \xi_0 g] \ra = 0
    \end{equation*}
    for all $f \in \Ok/I$, and all $\xi_0 \in \ker \varphi_{p+1}$. By non-degeneracy in the second argument,
    for $\Ok/I$, we get that $\xi_0 a_p g = 0$ in $\Ext^p(\Ok/I,\Ok)$. Thus,
    $g \in I : I+(\ker \varphi_{p+1}^*) a_p$, and by Lemma~\ref{lma:colon},
    $g \in I : (I:J)$, so, $g \in J_{[p]}$ by \eqref{eq:link-ideals}, i.e., $g \in G_{(p+1)}$.
\end{proof}

Arguing in a similar way, one would also obtain that \eqref{eq:link-modules} implies
non-degeneracy in the first argument for any finitely generated $\Ok$-module $G$
of codimension $\geq p$, and not just $G$ of the form $G = \Ok/J$. However, since we
know of a proof of \eqref{eq:link-modules} which does not depend on Theorem~\ref{thm:local-duality}
only in the case when $G = \Ok/J$, i.e., \eqref{eq:link-ideals}, we have only stated
Lemma~\ref{lma:reduction-first} in this case.

In the proof of Lemma~\ref{lma:reduction-first}, we used Lemma~\ref{lma:colon}
and \eqref{eq:link-ideals} to obtain a proof of non-degeneracy in the first argument of the pairing
in Theorem~\ref{thm:local-duality} when $G$ is of the form $G = \Ok/J$.
Here we show that we can also go the other way, and prove \eqref{eq:link-ideals}, or more generally
\eqref{eq:link-modules} using the functoriality and non-degeneracy in the first argument of the
pairing in Theorem~\ref{thm:local-duality}.

\begin{proof}[Proof of Theorem~\ref{thm:colon-modules}]
    It is clear that $J_{[p]} \subseteq I:(I:J_{[p]})$. In addition, we claim that $I:J \subseteq I:J_{[p]}$,
    which implies that $J_{[p]} \subseteq I:(I:J)$. To prove the claim, we first write
    $J = J_{[p]} \cap J_{[\geq p+1]}$, where $J_{[\geq p+1]}$ is the intersection of all primary components
    of codimension $\geq p+1$. If $\gamma(J) \subseteq I$, then $\gamma(J_{[p]}) \subseteq I$ outside of $Z(J_{[\geq p+1]})$
    which has codimension $\geq p+1$, so $\gamma(J_{[p]}) \subseteq (\Ok/I)_{(p+1)}$, and thus, $\gamma(J_{[p]}) = 0$ in $\Ok/I$,
    since $\Ok/I$ has pure codimension $p$.

    It remains to prove the reverse inclusion $I:(I:J) \subseteq J_{[p]}$.
    We now take $(E,\varphi)$, $(K,\psi)$, $(L,\delta_f)$ and $a : (K,\psi) \to (E,\varphi)$
    as before the statement of Lemma~\ref{lma:colon}.
    Take $g \in I : (I:J)$, considered as an element in $\Ok^r$. This element induces a morphism $\epsilon_g : \Hom(\Ok/I,(\Ok/I)^r)$,
    and we let $c : (L,\delta_f) \to (K,\psi)$ be the morphism
    \begin{equation*}
        \Id_L \otimes g : L \to \Ok^r \otimes L = K
    \end{equation*}
    which extends $\epsilon_g$. By the fact that $g \in I : (I:J)$, we get that
    $c_p^* \in (I : (I : L_p \otimes J))$.
    We note that $g = \alpha \epsilon_g(1)$, so by functoriality of the pairing, we get
    \begin{equation*}
        \la g, \xi \ra = \la \alpha(\epsilon_g(1)), \xi \ra = \la 1, \epsilon_g^* \alpha^* \xi \ra.
    \end{equation*}
    If we represent $\xi = [\xi_0]$, where $\xi_0 \in \ker \varphi_{p+1}^*$, then
    \begin{equation*}
        [\epsilon_g^* \alpha^* \xi_0 ] = [ \xi_0 a_p c_p ].
    \end{equation*}
    Since $c_p^* \in (I : (I : L_p \otimes J)) = I : (I K_p^* + (\ker \varphi_{p+1}^*) a_p)$ by Lemma~\ref{lma:colon},
    we get that $\im \xi_0 a_p c_p \in I$, so $[ \xi_0 a_p c_p ] = 0$, since
    $I \Ext^p(\Ok/I,\Ok) = 0$. Thus, $\la g, \xi \ra = 0$ for all $\xi$, and by non-degeneracy
    of the pairing \eqref{eq:local-duality-pairing-descended}, we get that $g = 0$ in $G^{(p)}$,
    i.e., $g \in J_{[p]}$ by Remark~\ref{rem:equihull}.
\end{proof}

\section{Proofs of Theorem~\ref{thm:local-duality}, Theorem~\ref{thm:local-duality-surjective} and Theorem~\ref{thm:local-duality-general}} \label{sect:general-pairing}

We begin with the following consequence of Proposition~\ref{prop:ext-vanishing}.

\begin{cor} \label{cor:extp-left-exact}
    Let
    \begin{equation*}
        A \to B \to C \to 0
    \end{equation*}
    be an exact sequence of finitely generated $\Ok$-modules such that
    $A$, $B$ and $C$ have support in a variety of codimension $\geq p$.
    Then
    \begin{equation*}
        0 \to \Ext^p(C,\Ok) \to \Ext^p(B,\Ok) \to \Ext^p(A,\Ok)
    \end{equation*}
    is exact.
\end{cor}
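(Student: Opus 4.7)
The plan is to split the three-term exact sequence into two short exact sequences and apply the long exact sequence of $\Ext$ twice, using Proposition~\ref{prop:ext-vanishing} to kill off the obstructing $\Ext^{p-1}$-terms.

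Concretely, I would first set $K := \im(A \to B) = \ker(B \to C)$ and $L := \ker(A \to K)$, so that we obtain two short exact sequences
\begin{equation*}
    0 \to K \to B \to C \to 0 \quad \text{and} \quad 0 \to L \to A \to K \to 0.
\end{equation*}
Since $K$ is a submodule of $B$ and $L$ is a submodule of $A$, both $K$ and $L$ have support in the same codimension $\geq p$ variety as $A$, $B$ and $C$, and in particular $\codim K \geq p$ and $\codim L \geq p$.

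Next, I apply the long exact sequence of $\Ext(\bullet,\Ok)$ to $0 \to K \to B \to C \to 0$:
\begin{equation*}
    \Ext^{p-1}(K,\Ok) \to \Ext^p(C,\Ok) \to \Ext^p(B,\Ok) \to \Ext^p(K,\Ok).
\end{equation*}
By Proposition~\ref{prop:ext-vanishing}, $\Ext^{p-1}(K,\Ok) = 0$, so $\Ext^p(C,\Ok) \to \Ext^p(B,\Ok)$ is injective and its image equals the kernel of $\Ext^p(B,\Ok) \to \Ext^p(K,\Ok)$. I then apply the same long exact sequence to $0 \to L \to A \to K \to 0$:
\begin{equation*}
    \Ext^{p-1}(L,\Ok) \to \Ext^p(K,\Ok) \to \Ext^p(A,\Ok),
\end{equation*}
and again Proposition~\ref{prop:ext-vanishing} forces $\Ext^{p-1}(L,\Ok) = 0$, so $\Ext^p(K,\Ok) \hookrightarrow \Ext^p(A,\Ok)$ is injective.

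Chaining these two facts together, the kernel of $\Ext^p(B,\Ok) \to \Ext^p(A,\Ok)$ coincides with the kernel of $\Ext^p(B,\Ok) \to \Ext^p(K,\Ok)$, which is exactly the image of $\Ext^p(C,\Ok)$. This gives the desired exactness. There is no real obstacle here; the only point to be careful about is verifying that $K$ and $L$, which are constructed as kernels/images, indeed have codimension $\geq p$ so that Proposition~\ref{prop:ext-vanishing} applies, but this is immediate since they are submodules of modules whose support already has codimension $\geq p$.
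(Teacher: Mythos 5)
Your proposal is correct and follows essentially the same route as the paper's proof: split into the two short exact sequences $0\to K\to B\to C\to 0$ and $0\to L\to A\to K\to 0$ (the paper calls your $L$ by $A'$), apply the long exact sequence of $\Ext$ to each, and use Proposition~\ref{prop:ext-vanishing} to kill the $\Ext^{p-1}$ terms of $K$ and $L$, which both have codimension $\geq p$ as submodules of $B$ and $A$ respectively.
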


\begin{proof}
    We let $K = \ker (B \to C)$, and thus have a short exact sequence
    \begin{equation*}
        0 \to K \to B \to C \to 0,
    \end{equation*}
    and since $\codim K \geq p$, we get from the long exact sequence of $\Ext$
    and Proposition~\ref{prop:ext-vanishing} an exact sequence
    \begin{equation} \label{eq:ExtCBK}
        0 \to \Ext^p(C,\Ok) \to \Ext^p(B,\Ok) \to \Ext^p(K,\Ok).
    \end{equation}
    Since $K = \ker (B \to C) = \im (A \to B)$, if we let $A' := \ker(A \to B)$,
    we have a short exact sequence
    \begin{equation*}
       0 \to A' \to A \to K \to 0,
    \end{equation*}
    and since $A'$ has codimension $\geq p$, we get as above an injection
    \begin{equation*}
        \Ext^p(K,\Ok) \to \Ext^p(A,\Ok).
    \end{equation*}
    Composing \eqref{eq:ExtCBK} with this injection at the end, we get the desired short exact sequence.
\end{proof}

We will combine Corollary~\ref{cor:extp-left-exact} with the following
result, which is a combination of \cite{HLocal}*{Lemma~4.1} and \cite{HLocal}*{Proposition~4.2}.

\begin{prop} \label{prop:grothendieck-isomorphism}
    Let $R$ be a commutative Noetherian ring, and let $T$ be a contravariant additive functor from the
    category of finitely generated $R$-modules to the category of abelian groups.
    Then there is a natural transformation of functors
    \begin{equation} \label{eq:functorial-isomorphism}
        T \to \Hom(\bullet,T(R)),
    \end{equation}
    and \eqref{eq:functorial-isomorphism} is an isomorphism if and only if $T$ is left exact.
\end{prop}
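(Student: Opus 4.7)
The plan is to first construct the natural transformation $\Phi$ explicitly, in the spirit of the Grothendieck pairing \eqref{eq:pairing-grothendieck}. Given a finitely generated $R$-module $M$ and $\xi \in T(M)$, define $\Phi_M(\xi) : M \to T(R)$ by $\Phi_M(\xi)(m) := T(\epsilon_m)(\xi)$, where $\epsilon_m : R \to M$ is the morphism sending $1$ to $m$. To interpret this as a morphism into $\Hom(M,T(R))$, equip $T(R)$ with its canonical $R$-module structure $r \cdot \xi := T(r \cdot)(\xi)$, where $r \cdot : R \to R$ denotes multiplication by $r$. Then $R$-linearity of $\Phi_M(\xi)$ is forced by the identities $\epsilon_{m_1+m_2} = \epsilon_{m_1} + \epsilon_{m_2}$ and $\epsilon_{rm} = \epsilon_m \circ (r\cdot)$ combined with additivity of $T$, and functoriality of $\Phi$ in $M$ follows from the relation $\epsilon_{f(m)} = f \circ \epsilon_m$ for any morphism $f : M \to N$.

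For the equivalence, one direction is immediate: $\Hom(\bullet, T(R))$ is always left exact, so if $\Phi$ is a natural isomorphism then $T$ must be left exact. For the converse, given any finitely generated $R$-module $M$, choose a finite presentation
$$R^s \xrightarrow{\varphi} R^r \to M \to 0.$$
Since both $T$ and $\Hom(\bullet,T(R))$ are additive contravariant functors, they convert finite direct sums into finite products, so $\Phi_{R^k}$ reduces to the canonical identification $T(R)^k \cong \Hom(R^k, T(R))$; in particular $\Phi_R$ is just $\xi \mapsto (r \mapsto r\xi)$, which is tautologically an isomorphism. Assuming $T$ is left exact, applying $T$ and $\Hom(\bullet, T(R))$ to the presentation yields two left exact sequences arranged in a commutative diagram with $\Phi_M$, $\Phi_{R^r}$, $\Phi_{R^s}$ as vertical maps, and the latter two are isomorphisms. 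A short five-lemma-style diagram chase then forces $\Phi_M$ to be an isomorphism as well.

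The main subtlety is the correct bookkeeping of the induced $R$-module structure on $T(R)$ and the verification that $\Phi_M(\xi)$ is genuinely $R$-linear; once that is in place, the rest of the argument is a formal Yoneda-type manipulation exploiting the universal property of finitely generated free modules together with left exactness.
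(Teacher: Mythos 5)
Your proof is correct and matches the standard argument that the paper delegates to (Hartshorne's \emph{Local Cohomology}, Lemma~4.1 and Proposition~4.2): define $\Phi$ pointwise via the morphisms $\epsilon_m : R \to M$, observe that $\Phi_{R^k}$ is tautologically an isomorphism by additivity, and propagate to arbitrary finitely generated modules by applying the two left exact functors to a finite presentation and comparing kernels. In particular your explicit description $\Phi_M(\xi)(m) = T(\epsilon_m)(\xi)$ is exactly the one the paper invokes afterwards when it remarks that the morphism in Corollary~\ref{cor:exthomp} sends $\xi$ to $g \mapsto \epsilon_g^*\xi$.
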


If we let $R = \Ok/J$, where $J$ has codimension $p$, then combining these two results, we get the following corollary,
since by Corollary~\ref{cor:extp-left-exact}, the functor $T(G) := \Ext^p(G,\Ok)$
is left exact on finitely generated $\Ok/J$-modules.

\begin{cor} \label{cor:exthomp}
    Let $G$ be a finitely generated $\Ok$-module, $J \subseteq \ann G$,
    and assume that $J$ is of codimension $\geq p$. Then
    \begin{equation*}
        \Ext^p(G,\Ok) \to \Hom(G,\Ext^p(\Ok/J,\Ok))
    \end{equation*}
    is an isomorphism.
\end{cor}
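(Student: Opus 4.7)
The plan is to apply Proposition~\ref{prop:grothendieck-isomorphism} with the Noetherian ring $R := \Ok/J$ and the contravariant additive functor $T$ from finitely generated $\Ok/J$-modules to abelian groups defined by $T(M) := \Ext^p(M,\Ok)$. Since $J \subseteq \ann G$, the module $G$ is naturally a finitely generated $\Ok/J$-module, so it lies in the category on which $T$ is defined.

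The main step is to verify the left-exactness hypothesis of Proposition~\ref{prop:grothendieck-isomorphism} for this $T$. Every finitely generated $\Ok/J$-module has support inside $Z(J)$, which has codimension $\geq p$ by assumption. Hence any exact sequence $A \to B \to C \to 0$ of such modules consists of three modules of codimension $\geq p$, and Corollary~\ref{cor:extp-left-exact} immediately produces the exact sequence
\begin{equation*}
0 \to \Ext^p(C,\Ok) \to \Ext^p(B,\Ok) \to \Ext^p(A,\Ok),
\end{equation*}
which is precisely left exactness of $T$. Proposition~\ref{prop:grothendieck-isomorphism} then yields that the natural functorial morphism $T \to \Hom_{\Ok/J}(\bullet, T(\Ok/J))$ is an isomorphism of functors, and evaluating at $G$ gives an isomorphism $\Ext^p(G,\Ok) \cong \Hom_{\Ok/J}(G, \Ext^p(\Ok/J,\Ok))$.

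The only remaining point is to identify the $\Hom_{\Ok/J}$ on the right with the $\Hom_{\Ok}$ appearing in the statement. Since $\Ext^p(\bullet,\Ok)$ is functorial and multiplication by any $f \in J$ on $\Ok/J$ is the zero map, the module $\Ext^p(\Ok/J,\Ok)$ is annihilated by $J$; together with the fact that $G$ is annihilated by $J$, this means any $\Ok$-linear map between them is automatically $\Ok/J$-linear, so the two $\Hom$ groups coincide. There is no substantial obstacle here: the corollary is essentially a direct packaging of Corollary~\ref{cor:extp-left-exact} and Proposition~\ref{prop:grothendieck-isomorphism}, and the only care required is in checking that the module structures line up.
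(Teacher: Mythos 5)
Your proposal matches the paper's proof exactly: apply Proposition~\ref{prop:grothendieck-isomorphism} over $R = \Ok/J$ with $T = \Ext^p(\bullet,\Ok)$, and obtain left-exactness of $T$ from Corollary~\ref{cor:extp-left-exact} since all finitely generated $\Ok/J$-modules have support in $Z(J)$ of codimension $\geq p$. Your extra remark identifying $\Hom_{\Ok/J}$ with $\Hom_{\Ok}$ is a correct (if routine) point that the paper leaves implicit.
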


From the proof of Proposition~\ref{prop:grothendieck-isomorphism}, it follows
that the morphism sends $\xi$ to $g \mapsto \epsilon_g^* \xi$.

\subsection{Singularity subvarieties and the $S_k$-property} \label{ssect:Zp-Sk}

The statements of Theorem~\ref{thm:local-duality-surjective} and Theorem~\ref{thm:local-duality-general} involve
Serre's $S_2$-property of a finitely generated $\Ok$-module
$G$, which can be expressed in terms of the following singularity subvarieties
associated to $G$. Although the $S_2$-property is defined for arbitrary
finitely generated $\Ok$-modules, we will here only treat the case of
$\Ok$-modules $G$ such that $Z(G)$ has pure codimension $p$,
as the definition becomes easier in this case, and this case is the
only one that we will use.

Given a free resolution $(E,\varphi)$ of $G$,
the associated singularity subvariety $Z_\ell = Z^E_\ell$ is
defined as the subvariety where $\varphi_\ell$ does not have optimal rank, where $\ell = 1,2,\dots$.

If $G$ is such that $Z = Z(G)$ has pure codimension $p$, then $G$ is said to be $S_k$ if
\begin{equation} \label{eq:Sk-def}
    \codim Z_{\ell} \geq \ell + k \text{ for } \ell \geq p+1.
\end{equation}

\begin{remark} \label{rem:SkST}
In \cite{ST}, certain singularity subvarieties $S_m$ associated to a coherent analytic sheaf
are defined, which are related to our singularity subvarieties $Z_k$ by the simple
relation that if $\dim X = n$, then $Z_k = S_{n-k}$, see \cite{LarDua}*{Proposition~26}.
\end{remark}

In commutative algebra, the $S_k$-condition is usually phrased as follows: $G$ is $S_k$ if
\begin{equation} \label{eq:Sk-alt-def}
    \depth G_P \geq \min(k,\dim G_P),
\end{equation}
for any prime ideal $P$, where $G_P$ denotes the localization at $P$. This is what appears to be the standard 
definition of the $S_k$-property for modules or sheaves, and appears in this way in for example \cite{BH}*{p. 62}.
In for example \cite{HGenDiv}*{p. 291}, another inequivalent definition is used, where $\dim G_P$ in the right-hand
side of \eqref{eq:Sk-alt-def} is replaced by $\dim \Ok_P$.

\begin{lma} \label{lma:altSkdefs}
    If $G$ has pure dimension $p$, then the conditions \eqref{eq:Sk-def} and \eqref{eq:Sk-alt-def} are equivalent.
\end{lma}

By for example the proof of \cite{BS}*{Theorem~II.2.1},
\begin{equation} \label{eq:Ext-Zk}
    Z_\ell(G) = \cup_{r \geq \ell} \supp \Ext^r(G,\Ok).
\end{equation}
Then, Lemma~\ref{lma:altSkdefs} follows from \cite{HL}*{Proposition~1.1.6} and \eqref{eq:Ext-Zk}.

Hence, using \eqref{eq:Ext-Zk}, if $Z(G)$ has pure codimension $p$, then $G$ is $S_k$ if 
and only if
\begin{equation} \label{eq:Sk-Ext-def}
    \codim \supp \Ext^\ell(G,\Ok)  \geq \ell + k \text{ for } \ell \geq p+1.
\end{equation}
In particular, when $Z(G)$ has pure codimension $p$, then by Proposition~\ref{prop:ext-ass-primes}, $G$ has pure codimension $p$ if
and only if $G$ is $S_1$. (The $S_1$-property is defined also for $G$ such that $Z(G)$ does not have pure
dimension, and then $S_1$ should mean that $G$ has no embedded primes.)

If $G = \Ok_X/J$, where $J = J_Z$, the ideal
of holomorphic functions vanishing on a subvariety $Z \subseteq X$,
then by the Serre criterion for normality, $Z$ is normal if and only
if $G$ is $S_2$ and $R_1$ (where $R_1$ means that $Z_{\sing}$ has
codimension at least $2$ in $Z$), cf., for example \cite{Markoe}*{Theorem~1}.

The following consequence of Proposition~\ref{prop:ext-kp} and \eqref{eq:Sk-Ext-def}
will be important in the proof of the main theorems.

\begin{cor} \label{cor:extisS2}
    If $G$ is a finitely generated $\Ok$-module of codimension $p$, then
    $\Ext^p(G,\Ok)$ is $S_2$.
\end{cor}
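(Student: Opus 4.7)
The plan is to combine the characterization of the $S_2$-property via vanishing of $\Ext$ given in \eqref{eq:Sk-Ext-def} with the already-stated information about higher $\Ext$ of $\Ext^p(G,\Ok)$ in Proposition~\ref{prop:ext-kp}. There is essentially no work beyond assembling these pieces.

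Set $H := \Ext^p(G,\Ok)$. First, I would invoke Proposition~\ref{prop:ext-lc-pure}(a) to note that $H$ has pure codimension $p$, so in particular $Z(H)$ has pure codimension $p$ and the characterization \eqref{eq:Sk-Ext-def} of the $S_k$-property is applicable to $H$. Thus, to prove $H$ is $S_2$, it is enough to verify
\begin{equation*}
    \codim \supp \Ext^{\ell}(H,\Ok) \geq \ell + 2 \quad \text{for all } \ell \geq p+1.
\end{equation*}

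But this is exactly the conclusion of Proposition~\ref{prop:ext-kp} applied to $G$, which states that
\begin{equation*}
    \codim \Ext^{k}(\Ext^p(G,\Ok),\Ok) \geq k + 2 \quad \text{for } k > p,
\end{equation*}
i.e., for $k \geq p+1$ (noting that codimension of the support is what is meant here, matching the usage in \eqref{eq:Sk-Ext-def}). So the verification is immediate, and the corollary follows.

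There is no genuine obstacle: the corollary is a direct packaging of Proposition~\ref{prop:ext-lc-pure}(a) (to ensure pure codimension so that the $S_2$-criterion \eqref{eq:Sk-Ext-def} applies) together with the quantitative bound of Proposition~\ref{prop:ext-kp} (to supply the codimension inequality). The only thing one might double check is the precise indexing, namely that ``$k > p$'' in Proposition~\ref{prop:ext-kp} matches ``$\ell \geq p+1$'' in \eqref{eq:Sk-Ext-def}, which it does.
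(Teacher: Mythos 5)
Your proof is correct and matches the paper's intended argument exactly: the paper states the corollary as an immediate consequence of Proposition~\ref{prop:ext-kp} together with the criterion \eqref{eq:Sk-Ext-def}. The only thing you add beyond the paper's terse statement is the explicit appeal to Proposition~\ref{prop:ext-lc-pure}(a) to verify that $\Ext^p(G,\Ok)$ has pure codimension $p$ so that \eqref{eq:Sk-Ext-def} is applicable, which is a sensible clarification but not a different route.
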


We will later on use the following characterization of the $S_2$-property, \cite{ST}*{Theorem~1.14} and Remark~\ref{rem:SkST}.

\begin{thm}
    Let $\mathcal{G}$ be a coherent analytic sheaf of pure codimension $p$.
    Then $\mathcal{G}$ is $S_2$ if and only if any local section of $\mathcal{G}$ defined outside some subvariety $Z$
    of codimension $\geq p+2$ extends over $Z$.
\end{thm}

From this, we obtain the following corollary.

\begin{cor} \label{cor:S2-isom}
    Let $F,G$ and $H$ be finitely generated $\Ok$-modules, and assume that there is an exact sequence
    \begin{equation*}
        0 \to F \to G \to H \to 0,
    \end{equation*}
    where $F$ and $G$ have pure codimension $p$, and $H$ has codimension $\geq p+2$. If $F$ is $S_2$, then $F \cong G$.
\end{cor}

\subsection{Dualizing complexes}

We will make use of the concept of dualizing complex of Grothendieck, here
formulated in the more basic language, avoiding derived categories, as done by Sharp,
\cite{Sharp}. See also for example \cite{Iver}*{Chapter~7} for a treatment of the necessary material.

Let $R$ be a Noetherian commutative ring, and let $C$ and $D$ be two complexes of $R$-modules, with differentials $d_C$ and $d_D$.
The complex $\Hom(C,D)$ is defined as $$\Hom^{k}(C,D) = \oplus_{\ell} \Hom(C_\ell,D_{\ell+k}),$$ where
the differential is $d \phi = d_D \phi - (-1)^k \phi d_C$ (or some other suitable choice of sign convention).
Recall that a morphism of complexes $a : C \to D$ is a quasi-isomorphism if the induced morphism $a_* : H^k(C) \to H^k(D)$
is an isomorphism for all $k$.

If $C$ and $D$ are complexes, one can define a natural morphism of complexes
\begin{equation} \label{eval}
    C \to \Hom(\Hom(C,D),D),
\end{equation}
which if $C$ is the complex, $C_0 = G$ and $C_k = 0$, $k \neq 0$,
then \eqref{eval} is simply the evaluation map $g \mapsto (\psi \mapsto \psi(g))$.
A \emph{dualizing complex} for $R$ is a bounded complex $D$ of injective modules with finitely
generated cohomology,  such that for all bounded complexes $C$ with finitely generated cohomology,
the map \eqref{eval} is a quasi-isomorphism.

If $R$ is a regular local ring, then $R$ has a finite injective resolution $I$, and
$I$ is then a dualizing complex for $R$, see for example \cite{Iver}*{Corollary~7.20}. 
In particular, for $R = \Ok$, we could take $I$ to be the Dolbeault complex of
germs of $(0,*)$-currents.

It also follows easily that if $I$ is a dualizing complex for $R$, then $\Hom(R/J,I)$ is a
dualizing complex for $R/J$, see \cite{Iver}*{Proposition~7.25}, which
implies the following result.

\begin{prop} \label{prop:dualizing}
    Let $J \subseteq \Ok$ be an ideal, $I$ a finite injective resolution of $\Ok$,
    and $I_J := \Hom(\Ok/J,I)$. Then, for any finitely generated $\Ok/J$-module $G$,
    the evaluation map \eqref{eval} gives an isomorphism
    \begin{equation} \label{eq:dualizing}
        G \to H^0(\Hom(\Hom(G,I_J),I_J)).
    \end{equation}
\end{prop}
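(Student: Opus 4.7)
The plan is to deduce the proposition directly from the general theory of dualizing complexes assembled just before the statement. Since $\Ok$ is a regular local ring, any finite injective resolution $I$ is a dualizing complex for $\Ok$ by \cite{Iver}*{Corollary~7.20}, and applying $\Hom(\Ok/J,\bullet)$ produces a dualizing complex $I_J$ for $\Ok/J$ by \cite{Iver}*{Proposition~7.25}. Both of these facts are already recorded in the paragraph preceding the statement, so they can be used as a black box.

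I would then view the finitely generated $\Ok/J$-module $G$ as a complex $C$ of $\Ok/J$-modules concentrated in degree $0$, namely $C_0 = G$ and $C_k = 0$ for $k \neq 0$. This complex is bounded and has finitely generated cohomology (equal to $G$ in degree $0$). The defining property of the dualizing complex $I_J$ then says that the evaluation morphism \eqref{eval},
\begin{equation*}
    C \to \Hom(\Hom(C,I_J),I_J),
\end{equation*}
is a quasi-isomorphism of complexes of $\Ok/J$-modules. Taking $H^0$ of both sides, and observing that $H^0(C) = G$ because $C$ is concentrated in degree $0$, produces the required isomorphism \eqref{eq:dualizing}.

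The only thing that needs a small verification, and which I do not expect to be a genuine obstacle, is that on the level of modules the degree-$0$ component of the evaluation morphism \eqref{eval} reduces to the usual map $g \mapsto (\psi \mapsto \psi(g))$ claimed in the statement. This is immediate: because $d_C = 0$, the complex $\Hom(C,I_J)$ is just $\Hom(G,I_J^{\bullet})$ with differential induced by $d_I$, and the construction of \eqref{eval} on the component $C_0 \to \Hom^0(\Hom(C,I_J),I_J)$ is precisely evaluation. Hence the main work is only the translation between the abstract quasi-isomorphism of complexes and the stated module-level isomorphism in cohomological degree zero.
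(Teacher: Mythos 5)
Your argument is precisely the one the paper intends: the paper itself gives no separate proof of Proposition~\ref{prop:dualizing}, but instead notes (citing \cite{Iver}*{Corollary~7.20} and \cite{Iver}*{Proposition~7.25}) that $I$ is a dualizing complex for $\Ok$ and $I_J$ is a dualizing complex for $\Ok/J$, and then asserts that this "implies the following result." Viewing $G$ as a complex concentrated in degree $0$, invoking the defining quasi-isomorphism property of the dualizing complex $I_J$, and passing to $H^0$ is exactly how that implication is supposed to be unwound, so the proposal matches the paper's approach.
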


\subsection{Proofs of non-degeneracy and surjectivity of induced morphisms in Theorem~\ref{thm:local-duality}
and \ref{thm:local-duality-surjective}} \label{ssect:proof-general1}

The non-degeneracy in the first argument, as well as the isomorphism \eqref{eq:general-duality-surjection1}
when $G$ is $S_2$ is a rather simple consequence of the following result, in combination with Proposition~\ref{prop:dualizing}.
Throughout this section, for an ideal $J$, we let $I_J$ be as in Proposition~\ref{prop:dualizing}.

\begin{prop} \label{prop:ssreplacement}
    Let $G$ be an $\Ok$-module and $J \subseteq \ann G$, and assume that $G$ and $J$ have pure codimension $p$.
    Let $\Psi$ be the map
    \begin{align} \label{eq:Psi}
        H^0( \Hom(\Hom(G,I_J),I_J))  
        \to \Hom(H^p(\Hom(G,I_J)),H^p(I_J)),
    \end{align}
    which if $[\psi] \in H^0( \Hom(\Hom(G,I_J),I_J))$ and $[\phi] \in H^p(\Hom(G,I_J))$,
    and if we write $\psi = \psi_0 + \dots + \psi_n$, where $\psi_k \in \Hom(\Hom(G,I_J^k),I_J^k)$,
    then $\Psi([\psi]) : [\phi] \mapsto [\psi_p \phi]$.
    Then $\Psi$ is injective, and if 
    $G$ is $S_2$, then $\Psi$ is surjective.
\end{prop}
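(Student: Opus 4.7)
The plan is to set $C := \Hom(G, I_J)$ and to factor $\Psi$ through the intermediate module $\Ext^p(\Ext^p(G, \Ok), \Ok)$, analysing the two resulting factors separately by means of a canonical truncation of $C$. Since $J$ annihilates $G$, the tensor-Hom adjunction gives $\Hom(G, I_J) \cong \Hom(G, I)$, so $H^k(C) = \Ext^k_{\Ok}(G, \Ok)$; this vanishes for $k < p$ by Proposition~\ref{prop:ext-vanishing}, and $H^p(C) = \Ext^p(G, \Ok)$ has pure codimension $p$ by Proposition~\ref{prop:ext-lc-pure}. Each $I_J^j = \Hom(\Ok/J, I^j)$ is an injective $\Ok/J$-module, so $\Hom(-, I_J)$ is exact on short exact sequences of complexes.

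Splitting $C$ by the canonical truncation yields a short exact sequence $0 \to A \to C \to B \to 0$, where $A := \tau^{\leq p} C$ is quasi-isomorphic to $\Ext^p(G, \Ok)[-p]$ (since $H^{<p}(C) = 0$) and $B$ has cohomology $\Ext^k(G, \Ok)$ concentrated in degrees $k > p$. Applying $\Hom(-, I_J)$ and passing to the induced long exact sequence in cohomology exhibits $\Psi$ as the composite
\begin{equation*}
    H^0(\Hom(C, I_J)) \xrightarrow{\phi_1} H^0(\Hom(A, I_J)) \cong \Ext^p(\Ext^p(G, \Ok), \Ok) \xrightarrow{\phi_2} \Hom(\Ext^p(G, \Ok), \Ext^p(\Ok/J, \Ok)),
\end{equation*}
where $\phi_1$ is the map in the long exact sequence induced by $A \hookrightarrow C$ and $\phi_2$ is the natural Ext-to-Hom edge map; concretely, $\phi_2$ coincides with the morphism of Corollary~\ref{cor:exthomp} applied to $M := \Ext^p(G, \Ok)$. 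Since $JM = 0$ and $\codim M \geq p$, that corollary guarantees $\phi_2$ is always an isomorphism, reducing everything to an analysis of $\phi_1$.

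From the long exact sequence, $\phi_1$ is injective once $H^0(\Hom(B, I_J)) = 0$ and an isomorphism once additionally $H^1(\Hom(B, I_J)) = 0$. I would verify these by filtering $B$ via its truncations $\tau^{\geq k} B$ for $k > p$, whose successive quotients are quasi-isomorphic to $\Ext^k(G, \Ok)[-k]$; for each such layer one has $H^n(\Hom(\Ext^k(G, \Ok)[-k], I_J)) = \Ext^{n+k}(\Ext^k(G, \Ok), \Ok)$. Since $G$ has pure codimension $p$, \eqref{eq:pure-ass-primes} gives $\codim \Ext^k(G, \Ok) \geq k + 1$ for $k > p$, so by Proposition~\ref{prop:ext-vanishing} this Ext vanishes whenever $n + k < k + 1$, i.e.\ for $n \leq 0$. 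An induction along the filtration then yields $H^0(\Hom(B, I_J)) = 0$ unconditionally, so $\phi_1$ and hence $\Psi$ are injective. Under the $S_2$ hypothesis, \eqref{eq:Sk-Ext-def} improves the bound to $\codim \Ext^k(G, \Ok) \geq k + 2$ for $k > p$, which extends the vanishing to $n = 1$; thus $H^1(\Hom(B, I_J)) = 0$, $\phi_1$ is an isomorphism, and therefore so is $\Psi$.

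The principal technical obstacle is the iterated truncation and induction argument for the vanishing of $H^n(\Hom(B, I_J))$: although the vanishing of the relevant Ext groups on each graded piece is immediate from the codimension estimates, one must connect the vanishings across the long exact sequences arising from the filtration of $B$. This is feasible because the bound $\codim \Ext^k(G, \Ok) \geq k + 1$ (respectively $k + 2$) grows linearly with $k$, keeping every contributing Ext group safely in the vanishing range throughout the induction.
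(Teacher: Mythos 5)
Your proof is correct and takes a genuinely different route from the paper's. The paper explicitly remarks that the existence of an injective map which is surjective under $S_2$ follows from a spectral sequence of the double complex $\Hom(\Hom(G,I_J),I_J)$, but that "it is not clear \dots how to extract from such an argument that the morphism is the one claimed", and therefore proceeds with a hands-on argument (Lemmas~\ref{lma:maptoHkk} and \ref{lma:tildePsiproperties}) that unwinds the convergence by explicit diagram chases. You sidestep this obstacle: by canonically truncating $C = \Hom(G,I_J)$ at degree $p$, you get an honest short exact sequence of complexes and the associated long exact sequence under $\Hom(-,I_J)$ (exact since each $I_J^j$ is injective over $\Ok/J$), which pins down $\Psi$ as the composite of the connecting map $\phi_1$ with the identification $H^0(\Hom(A,I_J))\cong \Ext^p(\Ext^p(G,\Ok),\Ok)$ and the edge isomorphism $\phi_2$ of Corollary~\ref{cor:exthomp}. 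This is the piece the paper found awkward to extract, and your factorization handles it cleanly — though the claim that the composite really equals $\Psi$ deserves a sentence of verification (tracing a class $[\psi]$ through the quasi-isomorphism $A\to\Ext^p(G,\Ok)[-p]$, using that $A^{p+1}=0$ to see $\psi_p|_{A^p}$ is $d_I$-closed and descends). After that, your codimension estimates — $\codim\Ext^k(G,\Ok)\ge k+1$ for $k>p$ from \eqref{eq:pure-ass-primes} since $G$ is pure, improving to $k+2$ under $S_2$ by \eqref{eq:Sk-Ext-def} — kill the error terms $H^0(\Hom(B,I_J))$ and $H^1(\Hom(B,I_J))$ exactly as needed; the devissage over the cohomology of $B$ is standard. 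Net effect: your argument is shorter and more conceptual, at the cost of invoking truncation functors and preservation of quasi-isomorphisms under $\Hom(-,I_J)$, where the paper's proof is more elementary and self-contained.
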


The fact that there is such an injective map, \eqref{eq:Psi}, which is surjective if $G$ is $S_2$
follows from the convergence of one of the spectral sequences of the double complex
$\Hom(\Hom(G,I_J),I_J)$. However, it is not clear to us how to extract from such
an argument that the morphism is the one as claimed in the proposition,
and hence we give a more direct proof of Proposition~\ref{prop:ssreplacement},
which corresponds to proving directly the convergence of the spectral sequence in
this special situation.

\begin{proof}
    We consider the double complex $E_{s,t}$ as defined by $$E_{s,t} = \Hom(\Hom(G,I_J^s),I_J^t),$$
    and we let $\partial'$ and $\partial''$ be the differentials when $t$ and $s$ are fixed, respectively,
    i.e.,
    $$
        \partial' : E_{s+1,t} \to E_{s,t} \text{ and } \partial'' : E_{s,t} \to E_{s,t+1}.
    $$
    We denote by $A$ the total complex of $E$, $A_k = \oplus E_{s,s+k}$, and
    where the differential on $E_{s,t}$ is $\partial = \partial' + (-1)^{s-t} \partial''$.

    We begin by verifying that $\Psi$ is indeed well-defined. Let $\psi \in H^0( A )$.
    Thus, if we write $\psi = \psi_0 + \dots + \psi_n$, then $\psi \in Z_0(A)$ is equivalent
    to that $\partial'' \psi_k = -\partial' \psi_{k+1}$ for $k=0,\dots,n-1$.
    Using this, it is straight-forward to verify that $\Psi$ is well-defined, i.e., if $\phi \in Z_p(\Hom(G,I_J))$,
    then $\dbar \psi_p \phi = 0$, and if $\phi \in B_p(\Hom(G,I_J))$, $\phi = \dbar \eta$, then 
    $\psi_p \phi = \dbar( -\psi_{p-1}\eta)$.

    We let $F_{s,t} = H^s_{\partial'}(E_{\bullet,t})$
    (which thus corresponds to the $E_1$-page for the spectral sequence associated to the double complex $E_{s,t}$).
    Since $G$ is annihilated by $J$, 
    $$\Hom(\Hom(G,I_J^s),I_J^t) \cong \Hom(\Hom(G,I^s),I^t),$$
    and since $I^t$ is injective, we get that $$F_{s,t} \cong \Hom(\Ext^s(G,\Ok),I^t).$$

    We let $H_{s,t} = H^t_{\partial''} (F_{s,\bullet})$ (which thus corresponds to the $E_2$-page for the spectral sequence
    associated to the double complex $E_{s,t}$), and we note that
    \begin{equation} \label{eq:Hext}
        H_{s,t} = \Ext^t(\Ext^s(G,\Ok),\Ok).
    \end{equation}

    \begin{lma} \label{lma:maptoHkk}
        Let $k \geq 0$, let $[\psi] \in H^0(A)$, and assume that
        $\psi = \psi_k + \dots + \psi_n$, where $\psi_\ell \in \Hom(\Hom(G,I_J^\ell),I_J^\ell)$.
        Then, there is a well-defined map $[\psi] \mapsto [ [\psi_k ]] \in H_{k,k}$. If $[ [\psi_k ] ] = 0$, then
        \begin{equation} \label{eq:psireduction}
        [\psi] = [\tilde{\psi}_{k+1} + \psi_{k+2} + \dots + \psi_n],
        \end{equation}
        where $\tilde{\psi}_{k+1} = \psi_{k+1} + \partial''\eta$ for some $\eta \in E_{k+1,k}$.
    \end{lma}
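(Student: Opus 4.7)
The plan is to proceed in three steps: verify that $[[\psi_k]]$ is a well-defined element of $H_{k,k}$ given $\psi$; show that it depends only on the class $[\psi] \in H^0(A)$ (among representatives starting at index $k$); and construct an explicit coboundary witnessing the reduction when $[[\psi_k]] = 0$.

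The first step is direct. The cocycle condition $\partial\psi = 0$, read in $A_1 = \oplus_s E_{s,s+1}$, unpacks to $\partial'\psi_{s+1} + \partial''\psi_s = 0$ for every $s$. Applied at $s = k-1$, where $\psi_{k-1} = 0$, this yields $\partial'\psi_k = 0$, so $\psi_k$ defines a class $[\psi_k]_{\partial'} \in F_{k,k}$; applied at $s = k$ it yields $\partial''\psi_k = -\partial'\psi_{k+1}$, and hence $[\psi_k]_{\partial'}$ is $\partial''$-closed, defining $[[\psi_k]] \in H_{k,k}$.

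For independence of representative, consider $\psi' = \psi + \partial\phi$ with both $\psi$ and $\psi'$ starting at index $k$. My strategy is to replace $\phi$ by $\phi + \partial\tau$, for a suitable $\tau \in A_{-2}$, so that the modified $\phi$ satisfies $\phi_\ell = 0$ for every $\ell < k$; once this is arranged, $\phi_k$ is forced to be $\partial'$-closed, and the formula $\psi'_k - \psi_k = \partial'\phi_{k+1} - \partial''\phi_k$ exhibits $[\psi'_k - \psi_k]_{\partial'}$ as a $\partial''$-boundary in $F_{k,\bullet}$, giving $[[\psi'_k]] = [[\psi_k]]$. The modification is carried out inductively on the smallest index $j$ with $\phi_j \neq 0$. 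At each stage the cocycle condition forces $\phi_j$ to be $\partial'$-closed and $[\phi_j]_{\partial'} \in F_{j,j-1}$ to be $\partial''$-closed, so it represents a class in $H_{j,j-1} = \Ext^{j-1}(\Ext^j(G,\Ok),\Ok)$. This group vanishes for every $j \geq 1$: by Proposition~\ref{prop:ext-ass-primes} one has $\codim \Ext^j(G,\Ok) \geq j$, and Proposition~\ref{prop:ext-vanishing} then forces $\Ext^{j-1}(\Ext^j(G,\Ok),\Ok) = 0$. Consequently $\phi_j = \partial''\rho + \partial'\sigma$ for some $\partial'$-closed $\rho \in E_{j,j-2}$ and some $\sigma \in E_{j+1,j-1}$, and adding $\partial(-\rho - \sigma)$ to $\phi$ zeroes $\phi_j$ without disturbing any lower component, which keeps the induction going.

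For the reduction, if $[[\psi_k]] = 0$ then $[\psi_k]_{\partial'}$ is a $\partial''$-boundary in $F_{k,\bullet}$, so $\psi_k = \partial''\tilde{\eta} + \partial'\eta$ for some $\partial'$-closed $\tilde\eta \in E_{k,k-1}$ and some $\eta \in E_{k+1,k}$. Taking $\phi \in A_{-1}$ with $\phi_k = \tilde\eta$, $\phi_{k+1} = -\eta$, and all other components zero, a component-wise computation of $\partial\phi$ yields $\psi + \partial\phi = (\psi_{k+1} + \partial''\eta) + \psi_{k+2} + \dots + \psi_n$, which is the desired form. The main obstacle is the inductive modification of $\phi$ in the well-definedness step, where the vanishing of every $\Ext^{j-1}(\Ext^j(G,\Ok),\Ok)$ --- itself a consequence of the codimension estimates established earlier --- is the essential input.
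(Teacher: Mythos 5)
Your proof is correct and follows essentially the same path as the paper's, computing component-wise in the total complex of the double complex $E_{s,t}$. You do supply one argument the paper leaves implicit: the paper's well-definedness step asserts without justification that a primitive of $\psi$ can be chosen with its $(k,k-1)$-component $\partial'$-closed, and your inductive modification of $\phi$ using the vanishing $H_{j,j-1}=\Ext^{j-1}(\Ext^j(G,\Ok),\Ok)=0$ (from Propositions~\ref{prop:ext-ass-primes} and~\ref{prop:ext-vanishing}) is precisely the missing justification.
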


    \begin{proof}
        First of all, we note that if $\psi = \psi_k + \dots + \psi_n$, then $\partial' \psi_k = 0$,
        so $\psi_k$ defines a class $[\psi_k] \in F_{k,k}$. Since $\partial''\psi_k = -\partial' \psi_{k+1}$,
        $[\psi_k]$ defines a class $[ [\psi_k ]] \in H_{k,k}$.

        We then verify that the map is well-defined, so we assume that $\psi = \psi_k + \dots + \psi_n$ and $[\psi] = 0$.
        Then, $\psi_k = \partial'' \gamma + \partial' \eta$ for some $\gamma \in E_{k,k-1}$
        with $\partial' \gamma = 0$, and some $\eta \in E_{k+1,k}$, and this implies that $[ [\psi_k] ] = 0$.

        Finally, if $[ [\psi_k] ] = 0$, then $\psi_k = \partial'' \gamma + \partial' \eta$, where $\gamma$ and
        $\eta$ are as above, and we conclude that with this choice of $\eta$, \eqref{eq:psireduction} holds.
    \end{proof}

    By \eqref{eq:Hext} and Proposition~\ref{prop:ext-ass-primes}, since $G$ has pure codimension $p$,
    \begin{equation} \label{eq:Hkk-vanish}
        H_{k,k} = 0 \text{ for $k \neq p$. }
    \end{equation}

    We now define a map
    \begin{equation} \label{eq:tildePsi}
        \tilde{\Psi} : H^0(A) \to H_{p,p}
    \end{equation}
    as follows. By \eqref{eq:Hkk-vanish} and Lemma~\ref{lma:maptoHkk} used repeatedly for
    $k=0,\dots,p-1$, an element $[\psi] \in H^0(A)$, where $\psi = \psi_0 + \dots + \psi_n$
    equals $[\tilde{\psi}_p + \psi_{p+1} + \dots + \psi_n]$, where 
    $\tilde{\psi}_p = \psi_p + \dbar \eta$ for some $\eta \in E_{p,p-1}$.
    By Lemma~\ref{lma:maptoHkk}, the map $\tilde{\Psi}([\psi]) := [[\tilde{\psi}_p]]$
    is well-defined.

    \begin{lma} \label{lma:tildePsiproperties}
        If $G$ and $J$ have pure codimension $p$, then $\tilde{\Psi}$ in \eqref{eq:tildePsi} is injective,
        and if $G$ is $S_2$, then $\tilde{\Psi}$ is surjective.
    \end{lma}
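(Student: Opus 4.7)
My plan is to prove both parts by combining iterative application of Lemma~\ref{lma:maptoHkk} with vanishing results for certain terms $H_{s,t}$ of the double complex.

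For injectivity, I would first establish $H_{k,k}=0$ for $k\neq p$ under the pure codimension $p$ hypothesis. For $k<p$, $\Ext^k(G,\Ok)=0$ by Proposition~\ref{prop:ext-vanishing}, so $H_{k,k}$ is trivially zero. For $k>p$, purity of $G$ combined with Proposition~\ref{prop:ext-ass-primes} shows that $\Ext^k(G,\Ok)$ has no associated prime of codimension $k$ (no such prime is associated to $G$), and therefore $\codim \Ext^k(G,\Ok)\geq k+1$; a second application of Proposition~\ref{prop:ext-vanishing} then yields $H_{k,k}=\Ext^k(\Ext^k(G,\Ok),\Ok)=0$. Given $[\psi]\in H^0(A)$ with $\tilde\Psi([\psi])=0$, Lemma~\ref{lma:maptoHkk} applied at level $p$ reduces $[\psi]$ to the class of $\tilde\psi_{p+1}+\dots+\psi_n$. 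The next obstruction $[[\tilde\psi_{p+1}]]$ lives in $H_{p+1,p+1}=0$, so the lemma applies again, and iterating through $k=p+2,\dots,n$ eventually yields $[\psi]=0$.

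For surjectivity under the $S_2$-hypothesis, the key vanishing is $H_{k,k+1}=0$ for $k>p$: by $S_2$ combined with \eqref{eq:Sk-Ext-def}, $\codim\supp\Ext^k(G,\Ok)\geq k+2$ for $k>p$, and Proposition~\ref{prop:ext-vanishing} gives $\Ext^{k+1}(\Ext^k(G,\Ok),\Ok)=0$. Given a class $[[\xi_p]]\in H_{p,p}$ represented by $\xi_p\in E_{p,p}$, the cocycle condition for $H_{p,p}$ is precisely that $\partial''\xi_p$ is $\partial'$-exact, so I can pick $\xi_{p+1}\in E_{p+1,p+1}$ with $\partial''\xi_p+\partial'\xi_{p+1}=0$. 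Inductively, suppose $\xi_p,\dots,\xi_k$ have been constructed satisfying $\partial''\xi_{j-1}+\partial'\xi_j=0$ for $j=p+1,\dots,k$. A short computation using these relations and $(\partial'')^2=0$ shows that $\partial''\xi_k$ lies in $\ker\partial'$ and defines a $\partial''$-cocycle class in $F_{k,k+1}$, hence a class in $H_{k,k+1}$. For $k\geq p+1$ this group vanishes, so there exists $\alpha'\in\ker\partial'|_{E_{k,k}}$ with $[\partial''\alpha']=[\partial''\xi_k]$ in $F_{k,k+1}$. Replacing $\xi_k$ by $\xi_k-\alpha'$ preserves the relation $\partial''\xi_{k-1}+\partial'\xi_k=0$ (since $\partial'\alpha'=0$) and makes $\partial''\xi_k\in\im\partial'|_{E_{k+1,k+1}}$, so $\xi_{k+1}$ can be chosen. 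The induction terminates at $k=n$ because $E_{n,n+1}=0$, giving a cocycle $\psi=\xi_p+\dots+\xi_n$ in $A_0$ with $\tilde\Psi([\psi])=[[\xi_p]]$.

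The main obstacle I anticipate is the adjustment step in the surjectivity argument: a naive choice of $\xi_{k+1}$ fails because $\partial''\xi_k$ need not be $\partial'$-exact on the nose. The vanishing of $H_{k,k+1}$ only ensures that the class of $\partial''\xi_k$ in $F_{k,k+1}$ is a $\partial''$-coboundary coming from $F_{k,k}$, and the delicate point is to argue that this cocycle-level discrepancy can be absorbed into a $\partial'$-closed modification of $\xi_k$ without disturbing the previously established relation with $\xi_{k-1}$; this is exactly where the requirement $\partial'\alpha'=0$ enters.
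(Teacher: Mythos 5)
Your proof is correct and follows essentially the same approach as the paper: iterating Lemma~\ref{lma:maptoHkk} together with the vanishing $H_{k,k}=0$ for $k>p$ (from purity of $G$ and Proposition~\ref{prop:ext-ass-primes}) gives injectivity, and the inductive construction of a cocycle $\psi_p+\dots+\psi_n$ using $H_{k,k+1}=0$ for $k>p$ (from $S_2$ via \eqref{eq:Sk-Ext-def}) gives surjectivity, with the adjustment by a $\partial'$-closed correction $\alpha'$ being exactly the paper's $\gamma_{p+1}$. The only differences are presentational, namely where in the induction the correction term is absorbed and the fact that you re-derive $H_{k,k}=0$ for $k\neq p$, which the paper establishes just before the lemma as \eqref{eq:Hkk-vanish}.
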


    \begin{proof}
        We begin by proving injectivity. If $[\psi] = [\psi_p + \dots + \psi_n]$,
        and $[ [\psi_p] ] = 0$, then by Lemma~\ref{lma:maptoHkk}, $[\psi] = [\tilde{\psi}_{p+1} + \psi_{p+2} + \dots + \psi_n]$.
        Since $H_{k,k} = 0$ for $k > p$ by \eqref{eq:Hkk-vanish}, we can continue this argument by induction to obtain that $[\psi] = 0$.

        To prove surjectivity, we take $[[\psi_p]]$ in $H_{p,p}$, and we want to prove that there exist
        $\psi_k \in E_{k,k}$ for $k=p+1,\dots,n$ such that $\partial(\psi_p + \dots + \psi_n) = 0$.
        First of all, since $[[\psi_p]]$ defines a class in $H_{p,p}$, $\partial' \psi_p = 0$
        and $\partial'' \psi_p = - \partial' \eta_{p+1}$ for some $\eta_{p+1}$ in $E_{p+1,p+1}$.
        
        By commutativity, $\partial'' \eta_{p+1}$ defines a class $[\partial'' \eta_{p+1}]$ in $F_{p+1,p+2}$,
        which is $\partial''$-closed, so it defines a class $[[\partial'' \eta_{p+1}]]$ in $H_{p+1,p+2}$.
        By Proposition~\ref{prop:ext-vanishing}, \eqref{eq:Sk-Ext-def} and \eqref{eq:Hext}, $H_{p+1,p+2} = 0$, since $G$ is $S_2$.
        Hence, $[\partial'' \eta_{p+1}] = \partial''[\gamma_{p+1}]$, where $\partial' \gamma_{p+1} = 0$,
        and we can take $\psi_{p+1}: = \eta_{p+1} - \gamma_{p+1}$. Then, $[\partial'' \psi_{p+1}] = 0$,
        and continuing in a similar way, we can by induction find the desired $\psi_{p+2},\dots,\psi_n$,
        using that $H_{p+k,p+k+1} = 0$ for $k=2,\dots$.
    \end{proof}

    From the definition of $\tilde{\Psi}$, and the fact that $(\partial'' \eta)_* = 0$ in $\Hom(H^p(G,I_J),H^p(I_J))$,
    it follows that $\Psi$ is the composition of $\tilde{\Psi}$
    with the following isomorphism:
    \begin{align*}
        H^p_{\partial''}(H^p_{\partial'}(\Hom(\Hom(G,I_J),I_J))) &\stackrel{\cong}{\to} H^p(H^p(\Hom(G,I_J)),I_J)   \\
        &\stackrel{\cong}{\to} \Hom(H^p(\Hom(G,I_J)),H^p(I_J)),
    \end{align*}
    where the first map is an isomorphism since $I_J$ is injective, and the second is an isomorphism
    by Corollary~\ref{prop:grothendieck-isomorphism}, since $$H^p(\Hom(G,I_J)) \cong H^p(\Hom(G,I)) \cong \Ext^p(G,\Ok),$$
    which has codimension $\geq p$ by Proposition~\ref{prop:ext-ass-primes}.
    The proposition then follows by this description of $\Psi$, and Lemma~\ref{lma:tildePsiproperties}.
\end{proof}

    \begin{lma} \label{lma:reduction-to-ext-pure}
        Let $G$, $Z$ and $p$ be as in Theorem~\ref{thm:local-duality},
        and let $J$ be an ideal of pure codimension $p$ such that $Z(J) = Z$.

        \noindent 1) The pairing \eqref{eq:local-duality-pairing-descended} is non-degenerate if and only if the pairing
        \begin{equation} \label{eq:pairing-to-extI}
            G/G_{(p+1)} \times \Ext^p(G/G_{(p+1)},\Ok) \to \Ext^p(\Ok/J,\Ok)
        \end{equation}
        given by $(g,\xi) \mapsto (\epsilon_g)_* \xi$ is non-degenerate.

        \noindent 2) The morphism
        \begin{equation*}
            G/G_{(p+1)} \to \Hom(\Ext^p(G/G_{(p+1)},\Ok),\Ext^p(\Ok/J,\Ok))
        \end{equation*}
        induced by \eqref{eq:pairing-to-extI} is surjective if and only if \eqref{eq:local-duality-surjection1} is.

        \noindent 3) The morphism 
        \begin{equation*}
             \Ext^p(G/G_{(p+1)},\Ok) \to \Hom(G/G_{(p+1)},\Ext^p(\Ok/J,\Ok))
        \end{equation*}
        induced by \eqref{eq:pairing-to-extI} is surjective if and only if \eqref{eq:local-duality-surjection2} is. 
    \end{lma}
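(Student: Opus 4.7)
The plan is to reduce both sides of each claimed equivalence to the same underlying induced morphism of modules, viewed through two canonical Hom-identifications of its target.

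First I would reduce to the case of pure codimension $p$. Set $G' = G/G_{(p+1)}$, which is either zero (so all claimed equivalences are trivial) or has pure codimension $p$. The short exact sequence $0 \to G_{(p+1)} \to G \to G' \to 0$ has all terms of codimension $\geq p$, and $\Ext^p(G_{(p+1)}, \Ok) = 0$ by Proposition~\ref{prop:ext-vanishing} since $\codim G_{(p+1)} \geq p+1$. Hence Corollary~\ref{cor:extp-left-exact} yields that the natural map $\pi^* : \Ext^p(G', \Ok) \to \Ext^p(G, \Ok)$ is an isomorphism. Using the identity $\epsilon_g = \epsilon_{\pi(g)} \circ \pi$ (valid for any $g \in G$ and any $t$ with $J^t g = 0$), one sees that the pairing \eqref{eq:local-duality-pairing-descended} for $G$ corresponds under $\pi^*$ to the analogous pairing for $G'$, so we may assume $G$ has pure codimension $p$.

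Next I would identify the two targets $H^p_Z(\Ok)$ and $\Ext^p(\Ok/J, \Ok)$ at the level of Hom-sets. For $H$ equal to either $\Ext^p(G, \Ok)$ or $G$, both of codimension $\geq p$ (the first by Proposition~\ref{prop:ext-ass-primes}), some power of $J$ annihilates $H$; combined with the identification $H^p_Z(\Ok) = \varinjlim_s \Ext^p(\Ok/J^s, \Ok)$ and the fact that $\Hom(H, \cdot)$ commutes with direct limits over such a system, Lemma~\ref{lma:hom-to-extI} together with \eqref{eq:Jt-isomorphism} produces natural isomorphisms
\begin{align*}
\Hom(\Ext^p(G, \Ok), H^p_Z(\Ok)) &\cong \Hom(\Ext^p(G, \Ok), \Ext^p(\Ok/J, \Ok)), \\
\Hom(G, H^p_Z(\Ok)) &\cong \Hom(G, \Ext^p(\Ok/J, \Ok)).
\end{align*}

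The main step is then to verify that under these identifications, the morphism $g \mapsto (\xi \mapsto \pi_t \epsilon_g^* \xi)$ induced by \eqref{eq:local-duality-pairing-descended} corresponds to $g \mapsto (\xi \mapsto (\epsilon_g)_* \xi)$ induced by \eqref{eq:pairing-to-extI}, and symmetrically for the second induced morphism in each pairing. Unpacking the proof of Lemma~\ref{lma:hom-to-extI}, the isomorphism $\Hom(H, \Ext^p(\Ok/J, \Ok)) \to \Hom(H, \Ext^p(\Ok/J^t, \Ok))$ is induced by the natural injection $\Ext^p(\Ok/J, \Ok) \hookrightarrow \Ext^p(\Ok/J^t, \Ok)$, whose further composition with $\pi_t$ is the canonical map into the colimit $H^p_Z(\Ok)$. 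Commutativity of the resulting diagram is then essentially tautological; the main obstacle is just the careful bookkeeping of the various powers $J^t$ and the canonical maps between them. Once this is done, injectivity (hence non-degeneracy) and surjectivity of the two induced morphisms coincide for the two pairings, yielding all three equivalences in the lemma.
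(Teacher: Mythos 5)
Your argument is correct and uses the same two ingredients as the paper's proof — the isomorphism $\Ext^p(G/G_{(p+1)},\Ok)\cong\Ext^p(G,\Ok)$ coming from the short exact sequence $0\to G_{(p+1)}\to G\to G/G_{(p+1)}\to 0$ together with Proposition~\ref{prop:ext-vanishing}, and the identification of Hom-targets via Lemma~\ref{lma:hom-to-extI} — merely applied in the opposite order. One small notational slip: the composition identity should read $\pi\circ\epsilon_g=\epsilon_{\pi(g)}$ rather than $\epsilon_g=\epsilon_{\pi(g)}\circ\pi$, but the intended meaning and the resulting compatibility of the two pairings are correct.
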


    \begin{proof}
        We note first that by Lemma~\ref{lma:hom-to-extI}, it is enough to prove the corresponding
        statements for the pairing
        \begin{equation} \label{eq:pairing-to-extII}
            G/G_{(p+1)} \times \Ext^p(G,\Ok) \to \Ext^p(\Ok/J,\Ok).
        \end{equation}
        
        From the long exact sequence associated to the short exact sequence,
        \begin{equation*}
            0 \to G_{(p+1)} \to G \to G/G_{(p+1)} \to 0,
        \end{equation*}
        and using that $\Ext^k(G_{(p+1)},\Ok) = 0$ for $k=p-1,p$ by Proposition~\ref{prop:ext-vanishing},
        we obtain that the surjection $G \to G/G_{(p+1)}$ induces an isomorphism
        \begin{equation} \label{eq:extp-pure-isom}
            \Ext^p(G/G_{(p+1)},\Ok) \stackrel{\cong}{\to} \Ext^p(G,\Ok),
        \end{equation}
        so the pairing \eqref{eq:pairing-to-extII} is then isomorphic to the pairing
        \eqref{eq:pairing-to-extI}.
    \end{proof}

    \begin{prop} \label{prop:pure-nondeg-surjective}
        Assume that $G$ has pure codimension $p$, and let $J \subseteq \ann G$ be of pure codimension $p$.
        Consider the pairing
        \begin{equation} \label{pairing-to-ext-III}
            G \times \Ext^p(G,\Ok) \to \Ext^p(\Ok/J,\Ok)
        \end{equation}
        given by $(g,\xi) \mapsto (\epsilon_g)_* \xi$. 

        \noindent 1) The morphism
        \begin{equation} \label{eq:induced-simpler-I}
            \Ext^p(G,\Ok) \to \Hom(G,\Ext^p(\Ok/J,\Ok))
        \end{equation}
        induced by \eqref{pairing-to-ext-III} is always an isomorphism.

        \noindent 2) The morphism
        \begin{equation} \label{eq:induced-simpler-II}
            G \to \Hom(\Ext^p(G,\Ok),\Ext^p(\Ok/J,\Ok))
        \end{equation}
        induced by \eqref{pairing-to-ext-III} is injective, and is surjective if and only if $G$ is $S_2$.
    \end{prop}

\begin{proof}
    That \eqref{eq:induced-simpler-I} is an isomorphism follows by Corollary~\ref{cor:exthomp}.

    The morphism \eqref{eq:induced-simpler-II} factors as the composition of \eqref{eq:dualizing} with \eqref{eq:Psi}, i.e.,
    \begin{equation} \label{eq:local-duality-isomorphism2}
        G \stackrel{\cong}{\to} H^0(\Hom(\Hom(G,I_J),I_J)) \stackrel{\Psi}{\to} \Hom(\Ext^p(G,\Ok),\Ext^p(\Ok/J,\Ok)),
    \end{equation}
    where the first morphism is an isomorphism by Proposition~\ref{prop:dualizing}.
    By Proposition~\ref{prop:ssreplacement}, $\Psi$ is always injective, and is surjective if $G$ is $S_2$.

    If we now consider \eqref{eq:induced-simpler-I}, with $G$ replaced by $\Ext^p(G,\Ok)$ (which also has pure codimension $p$ by Proposition~\ref{prop:ext-lc-pure}),
    we have thus just proven that
    \begin{equation*}
        \Ext^p(\Ext^p(G,\Ok),\Ok) \cong \Hom(\Ext^p(G,\Ok),\Ext^p(\Ok/J,\Ok)),
    \end{equation*}
    where the left-hand side is $S_2$ by  Corollary~\ref{cor:extisS2},
    and hence, \eqref{eq:induced-simpler-II} can be an isomorphism only if $G$ is $S_2$.
\end{proof}

\subsection{Proofs of non-degeneracy and surjectivity of induced morphisms in Theorem~\ref{thm:local-duality-general}} \label{ssect:proof-general2}

\begin{lma} \label{lma:ses-extp}
    The inclusion $G_{(p)} \to G$ induces a short exact sequence
    \begin{equation} \label{eq:ses-extp}
        0 \to \Ext^p(G,\Ok)^{(p)} \to \Ext^p(G_{(p)},\Ok) \to H \to 0,
    \end{equation}
    where $H$ has codimension $\geq p+2$.
\end{lma}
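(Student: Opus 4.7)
My plan is to obtain the claimed short exact sequence from the long exact sequence of $\Ext^\bullet(-,\Ok)$ applied to the canonical short exact sequence
\begin{equation*}
    0 \to G_{(p)} \to G \to G/G_{(p)} \to 0.
\end{equation*}
Since $G_{(p)}$ has codimension $\geq p$, Proposition~\ref{prop:ext-vanishing} gives $\Ext^{p-1}(G_{(p)},\Ok) = 0$, so the relevant portion of the long exact sequence reads
\begin{equation*}
    0 \to \Ext^p(G/G_{(p)},\Ok) \to \Ext^p(G,\Ok) \to \Ext^p(G_{(p)},\Ok) \to \Ext^{p+1}(G/G_{(p)},\Ok) \to \cdots
\end{equation*}

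The crux will be controlling the associated primes of $G/G_{(p)}$. I would argue that all associated primes of $G/G_{(p)}$ have codimension $<p$. Starting from a primary decomposition $0 = \bigcap Q_i \subseteq G$ with $\Ass(G/Q_i) = \{P_i\}$, one identifies $G_{(p)} = \bigcap_{\codim P_i < p} Q_i$ (an element $g \in G$ has $\codim \supp g \geq p$ precisely when its image in each $G/Q_i$ with $\codim P_i < p$ vanishes), whence $G/G_{(p)}$ embeds in $\bigoplus_{\codim P_i < p} G/Q_i$ and so has no associated prime of codimension $\geq p$. Then \eqref{eq:pure-ass-primes} applied with $k=p$ and $k=p+1$ respectively yields $\codim \Ext^p(G/G_{(p)},\Ok) \geq p+1$ and $\codim \Ext^{p+1}(G/G_{(p)},\Ok) \geq p+2$.

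The remaining steps are bookkeeping. Since $\Ext^p(G/G_{(p)},\Ok) \hookrightarrow \Ext^p(G,\Ok)$ has codimension $\geq p+1$, it is contained in $\Ext^p(G,\Ok)_{(p+1)}$, so the map $\Ext^p(G,\Ok) \to \Ext^p(G_{(p)},\Ok)$ descends to an injection $\Ext^p(G,\Ok)^{(p)} \hookrightarrow \Ext^p(G_{(p)},\Ok)$ with the same image. The cokernel $H$ of this injection therefore coincides with the cokernel of $\Ext^p(G,\Ok) \to \Ext^p(G_{(p)},\Ok)$, which by exactness embeds in $\Ext^{p+1}(G/G_{(p)},\Ok)$ and hence has codimension $\geq p+2$.

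The main obstacle is the primary-decomposition identification of $G_{(p)}$ and the ensuing bound on the associated primes of $G/G_{(p)}$; once that is in place, the conclusion follows mechanically from the long exact sequence and the vanishing results in Section~\ref{ssect:grothendieck-preliminaries}.
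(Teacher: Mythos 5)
Your proposal follows the paper's own argument closely: both begin from the short exact sequence $0 \to G_{(p)} \to G \to G/G_{(p)} \to 0$, pass to the long exact sequence of $\Ext^\bullet(-,\Ok)$, and use the absence of associated primes of codimension $\geq p$ in $G/G_{(p)}$ together with Proposition~\ref{prop:ext-ass-primes} (via \eqref{eq:pure-ass-primes}) to bound the codimensions of $\Ext^p(G/G_{(p)},\Ok)$ and $\Ext^{p+1}(G/G_{(p)},\Ok)$. The primary-decomposition argument you supply for why $G/G_{(p)}$ has only associated primes of codimension $<p$ is a useful elaboration of a fact the paper asserts without comment, and the final identification of $H$ with a submodule of $\Ext^{p+1}(G/G_{(p)},\Ok)$ is exactly as in the paper.

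One step in the ``bookkeeping'' is stated backwards. From $\ker\bigl(\Ext^p(G,\Ok) \to \Ext^p(G_{(p)},\Ok)\bigr) \subseteq \Ext^p(G,\Ok)_{(p+1)}$ you conclude that the map \emph{descends} to $\Ext^p(G,\Ok)^{(p)}$. That inclusion is what makes the descended map injective, but for the descent to exist at all one needs the \emph{reverse} inclusion $\Ext^p(G,\Ok)_{(p+1)} \subseteq \ker\bigl(\Ext^p(G,\Ok) \to \Ext^p(G_{(p)},\Ok)\bigr)$. This does hold: $\Ext^p(G_{(p)},\Ok)$ has pure codimension $p$ (or is zero) by Proposition~\ref{prop:ext-lc-pure}(a), so any element of $\Ext^p(G,\Ok)$ whose support has codimension $\geq p+1$ is sent to an element of $\Ext^p(G_{(p)},\Ok)$ with support of codimension $\geq p+1$, which must be zero. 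With both inclusions the kernel is exactly $\Ext^p(G,\Ok)_{(p+1)}$, and the rest of your argument goes through. The paper's own proof is equally terse on this point, so the gap is small, but it is worth filling in.
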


\begin{proof}
Consider the short exact sequence
\begin{equation*}
    0 \to G_{(p)} \to G \to H' \to 0,
\end{equation*}
where $H' := G/G_{(p)}$. From the long exact sequence of $\Ext$, we have an exact sequence
\begin{equation*}
    \Ext^p(H',\Ok) \to \Ext^p(G,\Ok) \to \Ext^p(G_{(p)},\Ok) \to \Ext^{p+1}(H',\Ok),
\end{equation*}
and since $H'$ has only associated primes of codimension $<p$, $\Ext^p(H',\Ok)$ has codimension $\geq p+1$ by Proposition~\ref{prop:ext-ass-primes},
so we get a short exact sequence \eqref{eq:ses-extp}, where $H := \im (\Ext^p(G_{(p)},\Ok) \to \Ext^{p+1}(H',\Ok))$.
By Proposition~\ref{prop:ext-ass-primes}, $\Ext^{p+1}(H',\Ok)$ has codimension $\geq p+2$, since $H'$ has only associated primes of codimension $<p$,
and thus $H$, being a submodule of $\Ext^{p+1}(H',\Ok)$, has codimension $\geq p+2$.
\end{proof}

\begin{prop} \label{prop:reduction-codim-p}
    Let $G$ and $Z$ be as in Theorem~\ref{thm:local-duality-general}. Then the pairing
    \eqref{eq:general-duality-pairing-descended} is non-degenerate, and the induced morphisms
    \eqref{eq:general-duality-surjection1} and \eqref{eq:general-duality-surjection2} are
    surjective if and only if $G^{(p)}$ and $\Ext^p(G,\Ok)^{(p)}$ are $S_2$, respectively.
\end{prop}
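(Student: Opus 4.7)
The plan is to reduce the statement to the conclusions of Theorem~\ref{thm:local-duality} and Theorem~\ref{thm:local-duality-surjective} applied to $G_{(p)}$ (which has codimension $\geq p$), measuring the discrepancy via the short exact sequence of Lemma~\ref{lma:ses-extp}. By functoriality of \eqref{eq:general-duality-pairing} applied to the inclusion $\iota : G_{(p)} \hookrightarrow G$, the pairing from Theorem~\ref{thm:local-duality-general} restricted to $G_{(p)} \times \Ext^p(G,\Ok)$ is the composition of $\iota^* : \Ext^p(G,\Ok) \to \Ext^p(G_{(p)},\Ok)$ with the pairing of Theorem~\ref{thm:local-duality} for $G_{(p)}$, and by Lemma~\ref{lma:ses-extp}, $\iota^*$ factors as $\Ext^p(G,\Ok) \twoheadrightarrow \Ext^p(G,\Ok)^{(p)} \hookrightarrow \Ext^p(G_{(p)},\Ok)$ with cokernel $H$ of codimension $\geq p+2$.

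For non-degeneracy of \eqref{eq:general-duality-pairing-descended}, the second argument is immediate: if $\xi \in \Ext^p(G,\Ok)^{(p)}$ pairs trivially with $G^{(p)}$, then viewed inside $\Ext^p(G_{(p)},\Ok)$ it pairs trivially with $G^{(p)}$, and non-degeneracy for $G_{(p)}$ forces $\xi = 0$. For the first argument, if $g \in G^{(p)}$ pairs trivially with $\Ext^p(G,\Ok)^{(p)}$, then the map $\Ext^p(G_{(p)},\Ok) \to H^p_Z(\Ok)$ induced by $g$ via the pairing for $G_{(p)}$ vanishes on $\Ext^p(G,\Ok)^{(p)}$ and hence factors through $H$; since $H$ has codimension $\geq p+2$ while $H^p_Z(\Ok)$ has pure codimension $p$ by Proposition~\ref{prop:ext-lc-pure}, this factored map is zero, so non-degeneracy for $G_{(p)}$ forces $g = 0$.

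For the assertion about \eqref{eq:general-duality-surjection2}, I would factor the morphism as
\begin{equation*}
    \Ext^p(G,\Ok)^{(p)} \hookrightarrow \Ext^p(G_{(p)},\Ok) \stackrel{\cong}{\to} \Hom(G^{(p)},H^p_Z(\Ok)),
\end{equation*}
where the second map is the isomorphism provided by Theorem~\ref{thm:local-duality-surjective} applied to $G_{(p)}$. Thus \eqref{eq:general-duality-surjection2} is an isomorphism if and only if $H = 0$, and since $\Ext^p(G_{(p)},\Ok)$ is always $S_2$ by Corollary~\ref{cor:extisS2}, Corollary~\ref{cor:S2-isom} applied to the sequence of Lemma~\ref{lma:ses-extp} shows this holds if and only if $\Ext^p(G,\Ok)^{(p)}$ is $S_2$.

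The main obstacle, which I expect to be the heart of the proof, concerns \eqref{eq:general-duality-surjection1}, which I would similarly factor as
\begin{equation*}
    G^{(p)} \to \Hom(\Ext^p(G_{(p)},\Ok),H^p_Z(\Ok)) \stackrel{\rho}{\to} \Hom(\Ext^p(G,\Ok)^{(p)},H^p_Z(\Ok)),
\end{equation*}
where the first map is an isomorphism if and only if $G^{(p)}$ is $S_2$ by Theorem~\ref{thm:local-duality-surjective} for $G_{(p)}$; it therefore suffices to show that $\rho$ is always an isomorphism. Taking $J$ to be a sufficiently high power of the ideal of $Z$ so that $J$ annihilates both $\Ext^p(G_{(p)},\Ok)$ and $\Ext^p(G,\Ok)^{(p)}$, Lemma~\ref{lma:hom-to-extI} replaces the target $H^p_Z(\Ok)$ by $\Ext^p(\Ok/J,\Ok)$, and Corollary~\ref{cor:exthomp} identifies the two Hom-groups with $\Ext^p(\Ext^p(G_{(p)},\Ok),\Ok)$ and $\Ext^p(\Ext^p(G,\Ok)^{(p)},\Ok)$ respectively. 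By naturality, $\rho$ is thereby identified with the morphism induced on $\Ext^p(-,\Ok)$ by the inclusion from Lemma~\ref{lma:ses-extp}, which sits in the long exact $\Ext$-sequence between $\Ext^p(H,\Ok)$ and $\Ext^{p+1}(H,\Ok)$. Both of these vanish by Proposition~\ref{prop:ext-vanishing} since $H$ has codimension $\geq p+2$, so $\rho$ is an isomorphism, completing the proof.
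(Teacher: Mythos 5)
Your proof is correct, and it follows essentially the same route as the paper: reduce to the codimension-$\geq p$ module $G_{(p)}$ using Lemma~\ref{lma:ses-extp}, with the discrepancy measured by the cokernel $H$ of codimension $\geq p+2$, and then invoke Theorems~\ref{thm:local-duality} and \ref{thm:local-duality-surjective} for $G_{(p)}$. The one place where the details diverge is the argument that $\rho$ is always an isomorphism. The paper takes the long exact sequence of $\Ext(-,\Ext^p(\Ok/J,\Ok))$ associated to \eqref{eq:ses-extp} and kills $\Ext^1(H,\Ext^p(\Ok/J,\Ok))$ via the depth estimate $\depth(\ann H,\Ext^p(\Ok/J,\Ok))\geq 2$. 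You instead first apply Lemma~\ref{lma:hom-to-extI} and (naturality of) Corollary~\ref{cor:exthomp} to identify the two $\Hom$-groups with $\Ext^p(-,\Ok)$ of the outer terms, after which the obstruction terms become $\Ext^p(H,\Ok)$ and $\Ext^{p+1}(H,\Ok)$, which vanish immediately from $\codim H\geq p+2$ by Proposition~\ref{prop:ext-vanishing}. Both arguments turn on the same input $\codim H\geq p+2$; your version trades the depth computation in a non-free coefficient module for an extra application of the functorial isomorphism of Corollary~\ref{cor:exthomp}, which is a perfectly legitimate and arguably slightly cleaner route.
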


\begin{proof}
    As in the proof of Lemma~\ref{lma:reduction-to-ext-pure}, it is enough to prove the corresponding
    statements for the pairing 
    \begin{equation*}
        G^{(p)} \times \Ext^p(G,\Ok)^{(p)} \to \Ext^p(\Ok/J,\Ok),
    \end{equation*}
    where $J \subseteq \ann G_{(p)}$.

    We first consider the morphism
    \begin{equation} \label{eq:local-duality-general-isomorphism1}
        \Ext^p(G,\Ok)^{(p)} \to \Hom(G^{(p)},\Ext^p(\Ok/J,\Ok))
    \end{equation}
    induced by the pairing. By Lemma~\ref{lma:ses-extp}, this factors as
    \begin{equation*}
        \Ext^p(G,\Ok)^{(p)} \to \Ext^p(G_{(p)},\Ok) \to \Hom(G^{(p)},\Ext^p(\Ok/J,\Ok)),
    \end{equation*}
    where the first morphism is injective, and by Proposition~\ref{prop:ext-ass-primes}, Corollary~\ref{cor:extisS2} and Corollary~\ref{cor:S2-isom},
    it is surjective if and only if $\Ext^p(G,\Ok)^{(p)}$ is $S_2$.
    By Proposition~\ref{prop:pure-nondeg-surjective} and \eqref{eq:extp-pure-isom}, the second morphism is an isomorphism. 

    It remains to consider the morphism 
    \begin{equation} \label{eq:local-duality-general-isomorphism2}
        G^{(p)} \to \Hom(\Ext^p(G,\Ok)^{(p)},\Ext^p(\Ok/J,\Ok)).
    \end{equation}
    By the long exact sequence of $\Ext$ associated to the short exact sequence \eqref{eq:ses-extp},
    and the fact that $\Hom(H,\Ext^p(\Ok/J,\Ok)) = 0$ since $H$ has codimension $\geq p+2$, and
    $\Ext^p(\Ok/J,\Ok)$ has pure codimension $p$, we obtain an exact sequence 
    \begin{align*}
        0 \to &\Hom(\Ext^p(G_{(p)},\Ok),\Ext^p(\Ok/J,\Ok)) \to  \\
        \to &\Hom(\Ext^p(G,\Ok)^{(p)},\Ext^p(\Ok/J,\Ok)) \to  
        \Ext^1(H,\Ext^p(\Ok/J,\Ok)).
    \end{align*}
    Since $\codim H \geq p+2$, and $\Ext^p(\Ok/J,\Ok)$ has pure codimension $p$, $$\depth(\ann H,\Ext^p(\Ok/J,\Ok)) \geq 2,$$
    and hence, $\Ext^1(H,\Ext^p(\Ok/J,\Ok)) = 0$ by Proposition~\ref{prop:ext-vanishing}.
    Hence, the first morphism is an isomorphism, and using this in combination with \eqref{eq:extp-pure-isom},
    we can reduce to the case $G = G^{(p)}$, and we then obtain the statement about surjectivity and
    injectivity of \eqref{eq:local-duality-general-isomorphism2} by Proposition~\ref{prop:pure-nondeg-surjective}.
\end{proof}

\section{The Andersson-Wulcan pairing} \label{sect:aw-pairing}

In this section, we give direct analytic expressions for the pairings \eqref{eq:local-duality-pairing} and \eqref{eq:general-duality-pairing}
with the help of residue currents, when $H^p_Z(\Ok)$ is represented as currents as in \eqref{eq:lc-current-repr}.

\subsection{Coleff-Herrera currents}

By \eqref{eq:lc-current-repr}, we can represent elements in $H^p_Z(\Ok)$
as $\dbar$-closed $(0,p)$-currents with support in $Z$ modulo $\dbar$
of such $(0,p-1)$-currents. When discussing cohomological residues below,
it will be useful that there is in fact a canonical choice of representative
in each such cohomology class. In fact, we also get just as in \cite{AndNoeth}
that our second way of defining the pairing indeed gives directly this
representative, see Remark~\ref{rem:chz-pairing}.

So called Coleff-Herrera currents
were introduced in \cite{DS1} (under the name ``locally residual currents''),
as canonical representatives of certain local cohomology classes.
Let $(Z,0)$ be the germ of a subvariety of $(\Cn,0)$ of pure codimension $p$.
A $(*,p)$-current $\mu$ on $(\Cn,0)$ is a \emph{Coleff-Herrera current},
denoted $\mu \in CH_Z$, if $\dbar \mu = 0$, $\overline{\psi} \mu = 0$ for
all holomorphic functions $\psi$ vanishing on $Z$, and $\mu$ has the
\emph{standard extension property}, SEP, with respect to $Z$.
We say that $\mu$ has the SEP if the limit $\lim_{\epsilon \to 0^+} \chi(|h|^2/\epsilon) \mu$ exists and is equal to $\mu$
for any tuple of holomorphic functions $h$ such that $\{ h = 0 \} \cap Z$ has
codimension $> p$, where $\chi(t) : \R \to \R$ is a smooth cut-off function
which is identically $0$ for $t$ close to $0$ and which is identically $1$
for $t \geq 1$.

This description of Coleff-Herrera currents is due to Bj\"ork, see
\cite[Chapter~3]{BjDmod}, and \cite[Section~6.2]{BjAbel}, . In \cite{DS1},
locally residual currents on $Z$ were defined as currents of the form
$\mu = \omega \wedge R$, with support on $Z$, where $\omega$ is a holomorphic
$(*,0)$-form, and $R$ is a Coleff-Herrera product of a tuple $(f_1,\dots,f_p)$
defining a complete intersection ideal $I = J(f_1,\dots,f_p)$ of codimension $p$.

As mentioned above, one of the main objectives of \cite{DS1}, and later
refined in \cite{DS2}, was to obtain canonical representatives of local
cohomology classes in $H^p_Z(\Ok)$ in terms of Coleff-Herrera products.
By \eqref{eq:lc-current-repr}, $H^p_Z(\Ok)$ is canonically isomorphic
to $H^p(C_Z^{0,\bullet})$. By Theorem~5.1 in \cite{DS2},
\begin{equation*}
   \ker(C_Z^{0,p} \stackrel{\dbar}{\to} C_Z^{0,p+1})
    = CH_Z \oplus \dbar C_Z^{0,p-1},
\end{equation*}
and one thus obtains an isomorphism
\begin{equation} \label{eq:ch-loc-coh}
    CH_Z  \stackrel{\cong}{\to} H^p(C^{0,\bullet}_Z) \cong H^p_Z(\Ok),
\end{equation}
so each element in $H^p_Z(\Ok)$ has a unique representative as a current
$\mu \in CH_Z$.

\subsection{Residue currents of Andersson-Wulcan} \label{ssect:aw-currents}

The duality theorem for Coleff-Herrera products, \eqref{eq:ch-duality}, was generalized by Andersson and Wulcan in \cite{AW1}.
Let $(E,\varphi)$ be a free resolution of length $N$ of a finitely generated $\Ok$-module $G$ of codimension $p > 0$,
such that $\varphi_1 : E_1 \to E_0$ is generically surjective,
and assume that $E_0,\dots,E_N$ are equipped with Hermitian metrics.
Andersson and Wulcan constructed in \cite{AW1} an associated $\Hom(E_0,E)$-valued residue current $R^E$
satisfying the following duality principle \cite{AW1}*{Theorem~1.1}: if
$g_0 \in E_0$, then
\begin{equation} \label{eq:aw-duality}
    R^E g_0 = 0 \text{ if and only if } g_0 \in \im \varphi_1.
\end{equation}
The current $R^E$ can be decomposed in the form
\begin{equation} \label{eq:Rparts}
    R^E = \sum_{k=p}^N R_k^E.
\end{equation}
where $R_k^E$ is a $\Hom(E_0,E_k)$-valued $(0,k)$-current. These currents satisfy that
\begin{equation} \label{eq:nablaR}
    \varphi_k R^E_k = \dbar R^E_{k-1},
\end{equation}
see \cite{AW1}*{Proposition~2.2}.

Let $G = \Ok/I$, where $I = (f_1,\dots,f_p)$ is a complete intersection ideal of codimension $p$,
and let $(E,\varphi)$ be the Koszul complex of $(f_1,\dots,f_p)$. Then it is not only the case that the
Coleff-Herrera product of $f$ and $R^E$ have the same annihilator, i.e., $I$, but they do in fact coincide.
More precisely, if we let $e_1,\dots,e_p$ be a trivial frame of $K_1 = \Ok^p$ such that the differential in
the Koszul complex is contraction with $\sum f_i e_i^*$, and $e_1\wedge \dots \wedge e_{p}$ is the induced frame
on $K_p$, then
\begin{equation} \label{eq:coincide-aw-ch}
    R^E = R^E_p = \dbar \frac{1}{f_p}\wedge \dots \wedge \dbar \frac{1}{f_1} \wedge e_1 \wedge \dots \wedge e_p,
\end{equation}
see \cite{AndCH}*{Corollary~3.2} and \cite{PTY}*{Theorem~4.1}.
Thus, the duality principle of Andersson-Wulcan is a direct generalization
of the duality theorem for Coleff-Herrera products.

As introduced in \cite{AW2}, a current of the form
\begin{equation*}
        \frac{1}{z_{i_1}^{n_1}}\cdots\frac{1}{z_{i_k}^{n_k}}\dbar\frac{1}{z_{i_{k+1}}^{n_{k+1}}}\wedge\cdots\wedge\dbar\frac{1}{z_{i_m}^{n_m}}\wedge \omega,
\end{equation*}
in some local coordinate system $z$, where $\omega$ is a smooth form with compact support is said to be an
\emph{elementary current}, and a current on a complex manifold is said to be
\emph{pseudomeromorphic}, if it can be written as a locally
finite sum of push-forwards of elementary currents under compositions of modifications
and open inclusions.
As can be seen from the construction, the Andersson-Wulcan currents $R^E_k$ are pseudomeromorphic.

An important property of pseudomeromorphic currents is that they satisfy the following
\emph{dimension principle},  \cite{AW2}*{Corollary~2.4}.
\begin{prop} \label{proppmdim}
    If $T$ is a pseudomeromorphic $(*,q)$-current with support on a variety $Z$,
    and $\codim Z > q$, then $T = 0$.
\end{prop}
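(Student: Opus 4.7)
The plan is to reduce the statement to the case of elementary currents and then argue by inspection. Since a pseudomeromorphic current is by definition a locally finite sum of push-forwards $\pi_* T'$ of elementary currents under compositions of modifications and open inclusions, and since the support hypothesis pulls back under such $\pi$ (a proper map, so $\codim \pi^{-1}(Z) \geq \codim Z > q$), it suffices to show: if $T$ is an elementary current of bidegree $(*,q)$ and the support of $T$ is contained in a subvariety of codimension strictly greater than $q$, then $T = 0$. The push-forward operation commutes with taking support in the appropriate sense, and elementary currents are preserved under the reductions, so this reduction step should be routine bookkeeping.

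For the elementary case, one writes $T$ in the local form
\[
T = \omega \wedge \frac{1}{z_{i_1}^{n_1}}\cdots\frac{1}{z_{i_k}^{n_k}}\dbar\frac{1}{z_{j_1}^{m_1}}\wedge\cdots\wedge\dbar\frac{1}{z_{j_q}^{m_q}}
\]
with $\omega$ a smooth compactly supported form on some coordinate chart. The number of $\dbar$-factors equals the bidegree in $\dbar$, namely $q$, and the support of $T$ is automatically contained in the coordinate subspace $V_0 := \{z_{j_1} = \cdots = z_{j_q} = 0\}$, which has codimension exactly $q$. Under the hypothesis $\supp T \subseteq Z$ with $\codim Z > q$, the inclusion $\supp T \subseteq V_0 \cap Z$ forces the support to lie in a proper subvariety of $V_0$. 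The goal of the final step is to conclude that the elementary current itself vanishes, and not merely that it is supported in this smaller set.

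The final step I would carry out by testing $T$ against smooth compactly supported forms of complementary bidegree. Since the principal value and $\dbar$-of-principal-value factors extend (as pseudomeromorphic currents) across the coordinate axes in the standard way, one can locally split off a transverse direction in $V_0$ along which the restriction of $T$ has the standard extension property: evaluating $T$ on test forms supported in tubular neighbourhoods of points of $V_0 \setminus Z$ gives $0$ by the support hypothesis, while the SEP forces the contribution from the smaller set $V_0 \cap Z$ to vanish as well. Combining the two shows $T = 0$.

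The main obstacle is this last step: making precise the statement that an elementary current cannot concentrate on a subvariety of $V_0$ of codimension bigger than $q$ without being zero. This requires the SEP-type behaviour of the principal value and $\dbar$-of-principal-value currents along analytic subsets, which is the technical heart of the theory of pseudomeromorphic currents in \cite{AW2}. The reduction to elementary currents is essentially formal; all the analytic content sits in this vanishing lemma on a single chart.
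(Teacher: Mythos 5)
The paper does not prove this proposition; it is cited directly from \cite{AW2}*{Corollary~2.4}, so there is no internal proof to compare against. Evaluating your attempt on its own terms, there are two genuine problems.

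First, the reduction to elementary currents does not go through as "routine bookkeeping." If $T = \sum_i (\pi_i)_* \tau_i$ is a locally finite decomposition into push-forwards of elementary currents, the hypothesis $\supp T \subseteq Z$ constrains only the \emph{sum}, not the individual summands: cancellations in the sum mean that $\supp \tau_i$ need not lie in $\pi_i^{-1}(Z)$, so you cannot pass the support hypothesis down to the elementary pieces. Moreover, the claimed inequality $\codim \pi^{-1}(Z) \geq \codim Z$ is simply false for modifications: if $\pi$ is the blow-up of a point in $\C^n$, the preimage of that point is the exceptional divisor, of codimension $1$ rather than $n$. So even if you could control the supports upstairs, the codimension hypothesis would be destroyed after pulling back, and the elementary statement you want to apply would no longer have its hypothesis.

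Second, the final step on a single chart, which you yourself flag as "the main obstacle," is in fact where all the work lives, and the sketch does not fill that gap. What the proof in \cite{AW2} actually does is establish an intermediate result first: if $T$ is pseudomeromorphic with support on a variety $Z$ and $h$ is holomorphic and vanishes on $Z$, then $\bar h\, T = 0$ and $d\bar h \wedge T = 0$. This is the statement that \emph{does} reduce correctly to elementary currents (because it concerns multiplication of $T$ by a function that vanishes on $\supp T$, and that property interacts well with push-forward and with the $\mathrm{SEP}$-type regularizations). Once one has it, the dimension principle follows by a soft argument on $Z_{\rm reg}$: choose local coordinates $(w, z)$ in which $Z = \{w_1 = \dots = w_p = 0\}$ with $p = \codim Z > q$. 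Then $d\bar w_i \wedge T = 0$ for $i = 1,\dots,p$ forces every term in the anti-holomorphic decomposition of $T$ to contain all of $d\bar w_1, \dots, d\bar w_p$, contradicting that $T$ has $\dbar$-degree $q < p$; one then iterates over $Z_{\rm sing}$. Your outline skips this intermediate vanishing lemma entirely and tries to go straight from the decomposition into elementary pieces, which is exactly where the two structural errors above appear. The approach would need to be rebuilt around the $\bar h T = 0$, $d\bar h \wedge T = 0$ statement to be salvageable.
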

This is a variant for pseudomeromorphic currents of the SEP, which we described
above for currents in $CH_Z$.

\subsection{A comparison formula for residue currents}

The following is a generalization of the transformation law for Coleff-Herrera products
to Andersson-Wulcan currents, and which is expressed with the help of the comparison
morphism as in Proposition~\ref{prop:comparison-morphism}.
We will use the following somewhat simplified version of \cite{LarComp}*{Theorem~3.2}.
The last part in the statement of the theorem is part of \cite{LarComp}*{Corollary~3.6}.

\begin{thm} \label{thm:Rcomparison}
Let $F$ and $G$ be finitely generated $\Ok$-modules and let $(E,\varphi)$ and $(K,\psi)$
be free resolutions of $G$ and $F$. Let $\alpha : F \to G$ be a morphism,
and let $a : (K,\psi) \to (E,\varphi)$ be a morphism of complexes, extending $\alpha$
as in Proposition~\ref{prop:comparison-morphism}.
Then, there exist pseudomeromorphic $\Hom(K_0,E_k)$-valued $(0,k-1)$-currents $M_k$ such that
\begin{equation} \label{eqRcomparison2}
    R^E_p a_0 - a_p R^K_p = \varphi_{p+1} M_{p+1} - \dbar M_p.
\end{equation}
For any $F$ and $G$, $M_p \psi_1 = 0$, and if $F$ and $G$ have codimension $\geq p$, then $M_p = 0$.
\end{thm}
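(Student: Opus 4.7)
The strategy is to work with the Andersson--Wulcan forms $U^E$ and $U^K$ associated to the Hermitian free resolutions $(E,\varphi)$ and $(K,\psi)$. Recall that $U^E$ is an $\Hom(E_0,E_\bullet)$-valued form which is smooth off the singularity subvariety $Z^E$ where $\varphi$ fails to be pointwise exact, and which satisfies $\nabla U^E = I_{E_0} - R^E$, where $\nabla := \varphi - \dbar$ acts on the total Hom-complex. An analogous form $U^K$ exists for $(K,\psi)$, and the Andersson--Wulcan machinery provides pseudomeromorphic extensions of $U^E, U^K$ across $Z^E, Z^K$ via an $|F|^{2\lambda}$-regularization followed by analytic continuation.

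The first key step is to exploit that $a : (K,\psi) \to (E,\varphi)$ being a morphism of complexes means $\varphi a = a\psi$, so $a$ commutes with $\nabla$. A formal computation then yields
\begin{equation*}
    \nabla(U^E a_0 - a U^K) = (I_{E_0} - R^E)a_0 - a(I_{K_0} - R^K) = a R^K - R^E a_0.
\end{equation*}
The plan is to define $M$ as the pseudomeromorphic limit (under the joint $|F|^{2\lambda}$-regularization applied simultaneously to both $U^E$ and $U^K$) of the smooth form $U^E a_0 - a U^K$, and then justify that $\nabla$ commutes with this limit, so that $\nabla M = aR^K - R^E a_0$ holds as an identity of currents. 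Decomposing $M = \sum_k M_k$ with $M_k$ being $\Hom(K_0,E_k)$-valued of bidegree $(0,k-1)$, and reading off the component of bidegree $(0,p)$, gives the desired equation $R^E_p a_0 - a_p R^K_p = \varphi_{p+1}M_{p+1} - \dbar M_p$. The main obstacle is precisely this regularization step: one needs to verify that the limits exist and that the $\nabla$-identity survives the passage to the limit, which typically requires a Hironaka-type resolution reducing the problem to explicit monomial computations where both $U^E$ and $U^K$ have known structure.

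For the final assertions, the identity $M_p \psi_1 = 0$ is addressed by observing that $R^K \psi_1 = 0$ by the duality principle \eqref{eq:aw-duality}, and that $a_0 \psi_1 = \varphi_1 a_1$ forces $R^E a_0 \psi_1 = 0$ as well, so $M\psi_1$ is $\nabla$-closed; combined with the explicit form of the regularized $M\psi_1$ and its pseudomeromorphic structure, the top-degree obstruction forces $M_p \psi_1 = 0$. When in addition $F$ and $G$ have codimension $\geq p$, the singularity sets $Z^E$ and $Z^K$ both have codimension $\geq p$, and $M_p$ is a pseudomeromorphic current of bidegree $(0,p-1)$ supported in $Z^E \cup Z^K$. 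The dimension principle, Proposition~\ref{proppmdim}, then forces $M_p = 0$, since a pseudomeromorphic $(0,p-1)$-current cannot have support in a set of codimension strictly greater than $p-1$.
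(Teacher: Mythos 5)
The paper does not actually prove Theorem~\ref{thm:Rcomparison}; it is imported verbatim from \cite{LarComp}*{Theorem~3.2 and Corollary~3.6}, as the sentence immediately preceding the statement says. So there is no internal proof to compare against, but the proposal can still be assessed, since the construction from \cite{LarComp} is at least alluded to later in the paper (in the proof of Proposition~\ref{prop:lu-functorial}, where the explicit formula $M_\ell = \sum_{k=1}^{\ell-1}(u^E)_\ell^k a_k (u^K)_k^0$ appears).

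Your starting point is right: the identity $\nabla_{\rm End} U = I - R$ and the fact that a morphism of complexes satisfies $\nabla_{\rm End} a = 0$ do yield, formally, $\nabla(U^E a_0 - a U^K) = a R^K - R^E a_0$, and reading off the $(0,p)$-bidegree, $\Hom(K_0,E_p)$-valued component gives an identity of the shape \eqref{eqRcomparison2}. Up to the regularization/analytic-continuation issues you flag (which are genuine and are handled in \cite{LarComp} and \cite{AW1}), this does establish the \emph{existence} of currents $M_k$ satisfying \eqref{eqRcomparison2}.

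The gap is in the two final assertions. If one takes $M := U^E a_0 - a U^K$ as you propose, then $M_p = U^E_p a_0 - a_p U^K_p$, and the claimed properties fail. First, $M_p$ is not supported in $Z^E \cup Z^K$: the $U$-type currents $U^E_k$, $U^K_k$ are pseudomeromorphic extensions of forms that are smooth and nonvanishing off the singularity sets, so their supports are typically all of $\Omega$, not $Z^E \cup Z^K$. Only the residual parts (such as $R^E = \nabla U^E - I$ or $\dbar U^E_p$ minus the smooth extension) are supported there, so the dimension principle cannot be applied to $M_p$ directly. Second, the vanishing $M_p = 0$ when $F$, $G$ have codimension $\geq p$ simply does not hold for this $M$: take $F = G$, $\alpha = \Id$, and $(K,\psi)=(E,\varphi)$ as complexes but with \emph{different} Hermitian metrics. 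Then $a = \Id$, so $M_p = U^K_p - U^E_p$, which is generically nonzero, even though the codimension hypothesis is satisfied. Similarly, $M_p\psi_1 = U^E_p\varphi_1 a_1 - a_p U^K_p\psi_1$ has no reason to vanish; knowing that $M\psi_1$ is $\nabla$-closed does not force its degree-$p$ component to be zero.

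The $M$ that actually satisfies all three claims in the theorem is a different, genuinely bilinear object: one must take $M$ to be (a pseudomeromorphic regularization of) a product of the schematic form $U^E\, a\, U^K$, with components like $\sum_k (U^E)_\ell^k a_k (U^K)_k^0$, rather than the linear expression $U^E a_0 - a U^K$. The two give the same class in the sense that both are $\nabla$-potentials for $aR^K - R^E a_0$, but only the bilinear choice has the structural feature that each summand of $M_p$ carries a factor $(U^K)_k^0$ with $k \geq 1$ (controlling $M_p\psi_1$) together with a factor $(U^E)_p^k$, so that the residual part is supported where both resolutions degenerate, which is where the dimension principle becomes applicable after the codimension hypothesis. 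Your write-up correctly identifies the governing $\nabla$-identity and the role of the dimension principle, but swapping in the wrong $M$ makes the concluding paragraph unrepairable as stated; you would need to switch to the bilinear $M$ of \cite{LarComp} and redo the bookkeeping.
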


One basic special case of Theorem~\ref{thm:Rcomparison} is when $F = \Ok/I$ and $G = \Ok/J$,
where $I = J(f_1,\ldots,f_p)$ and $J = J(g_1,\ldots,g_p)$ are complete intersection ideals of codimension $p$,
and $I \subseteq J$, or equivalently, there exists a holomorphic $(p\times p)$-matrix $A$ such that $(f_1,\ldots,f_p) = (g_1,\ldots,g_p) A$.
By Example~\ref{ex:ci-morphism}, if we take $(E,\varphi)$ and $(K,\psi)$ to be the Koszul complexes of $g$ and $f$ respectively,
then $a_p = \det A$. In addition, by \eqref{eq:coincide-aw-ch}, $R^E$ and $R^K$ are the Coleff-Herrera products of $g$ and $f$ respectively.
In addition, since $E_{p+1} = 0$, $M_{p+1} = 0$, since it is $\Hom(E_0,E_{p+1})$-valued,
and since $Z(I)$ and $Z(J)$ have codimension $p$, $M_p = 0$. Thus, in this case, the comparison formula,
\eqref{eqRcomparison2}, becomes the transformation law for Coleff-Herrera
products, \eqref{eq:transformation-law}.

\subsection{Definitions and properties of the pairing}

We here give an alternative expression for our pairing. This pairing is defined with the help of residue
current of Andersson and Wulcan from \cite{AW1}. The pairing appears explicitly in \cite{AndNoeth}, in the
case when $G$ has pure codimension $p$, including a proof that it then is non-degenerate, but it is not proven
there that the pairing is functorial in $G$.

If $(E,\varphi)$ is a free resolution of $G$, then the associated residue current $R^E_p$ as in \eqref{eq:Rparts}
takes values in $\Hom(E_0,E_p)$.
We use the identification of $\Ext^p(G,\Ok)$ with $H^p(\Hom(E_\bullet, \Ok))$, and one can thus represent
an element $\xi \in \Ext^p(G,\Ok)$ as
\begin{equation} \label{eq:xirepr}
    \xi = [\xi_0], \text{ where $\xi_0 \in \Hom(E_p,\Ok)$ is such that $\varphi_{p+1}^* \xi_0 = 0$}.
\end{equation}
In addition, for $g \in G_{(p)}$, we choose a representative
\begin{equation} \label{eq:grepr}
    g_0 \in E_0 \text{ such that } g = \pi(g_0) \text{ where } \pi : E_0 \to \coker \varphi_1 \cong G
\end{equation}
is the natural surjection.

The \emph{Andersson-Wulcan pairing} is defined by
\begin{equation} \label{eq:pairing-aw}
    \la g, \xi\ra_{AW} := \xi_0 R^E_p g_0,
\end{equation}
where $\xi_0$ and $g_0$ are as in \eqref{eq:xirepr} and \eqref{eq:grepr}.
Here, $\xi_0 R^E_p g_0$ is considered as an element of $H^p_Z(\Ok)$ using the representation \eqref{eq:lc-current-repr},
i.e., we claim that $\xi_0 R^E_p g_0$ is a $\dbar$-closed current in $C_Z^{0,p}$.
The fact that it is a $\dbar$-closed $(0,p)$-current follows exactly as in \cite{AndNoeth}. That it has its support
on $Z$ follows from that outside of $\supp g \subseteq \supp G_{(p)} \subseteq Z$, $g_0 \in \im \varphi_1$, and
hence, $\supp \xi_0 R^E_p g_0 \subseteq \supp g$ by \eqref{eq:aw-duality}.

\begin{lma} \label{lma:functorial-aw}
    The Andersson-Wulcan pairing \eqref{eq:pairing-aw} is well-defined,
    and functorial in $G$. More precisely, if $\alpha : F \to G$ is a morphism,
    $(K,\psi)$ and $(E,\varphi)$ are free resolutions of $F$ and $G$ respectively,
    and $a : (K,\psi) \to (E,\varphi)$ is a morphism of complexes extending $\alpha$,
    and $a_0 f_0 = g_0$, where $f_0 \in K_0$ and $g_0 \in E_0$ are representatives of
    $f \in F_{(p)}$ and $g \in G_{(p)}$, then
    \begin{equation} \label{eq:functorial-currents}
        \xi_0 R^E_p g_0 = a_p^* \xi_0 R^K_p f_0
    \end{equation}
    as currents.
\end{lma}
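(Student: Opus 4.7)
The plan is to prove the functoriality identity first, and then derive the various independences that constitute well-definedness as corollaries. The main tool is the comparison formula of Theorem~\ref{thm:Rcomparison}, combined with the dimension principle Proposition~\ref{proppmdim} for pseudomeromorphic currents.

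To prove \eqref{eq:functorial-currents}, I would apply Theorem~\ref{thm:Rcomparison} to the morphism $\alpha : F \to G$ and the comparison morphism $a : (K,\psi) \to (E,\varphi)$, giving
\begin{equation*}
    R^E_p a_0 - a_p R^K_p = \varphi_{p+1} M_{p+1} - \dbar M_p
\end{equation*}
with $M_p \psi_1 = 0$. Evaluating at $f_0$ (using $a_0 f_0 = g_0$) and composing on the left with $\xi_0 \in \Hom(E_p,\Ok)$ gives
\begin{equation*}
    \xi_0 R^E_p g_0 - a_p^* \xi_0 R^K_p f_0 = (\varphi_{p+1}^* \xi_0)(M_{p+1} f_0) - \dbar(\xi_0 M_p f_0).
\end{equation*}
The first term on the right vanishes because $\varphi_{p+1}^* \xi_0 = 0$ by the definition of $\xi_0$ representing $\xi \in \Ext^p(G,\Ok)$. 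The main obstacle is to show that the second term vanishes as a current (not merely as a cohomology class). For this, I would use that $f \in F_{(p)}$: outside $\supp f$, the element $f_0$ is locally in $\im \psi_1$, and since $M_p \psi_1 = 0$, it follows that $\supp(M_p f_0) \subseteq \supp f$. Because $M_p f_0$ is a pseudomeromorphic $(0,p-1)$-current with support of codimension $\geq p > p-1$, the dimension principle forces $M_p f_0 = 0$, yielding \eqref{eq:functorial-currents} as currents.

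For well-definedness, the independence of the choice of $g_0 \in E_0$ representing $g$ is immediate from the Andersson--Wulcan duality \eqref{eq:aw-duality}: replacing $g_0$ by $g_0 + \varphi_1 h$ changes $R^E_p g_0$ by $R^E_p \varphi_1 h$, which vanishes since $R^E$ annihilates $\im \varphi_1$. The independence of the choice of $\xi_0$ uses \eqref{eq:nablaR}: if $\xi_0 = \varphi_p^* h^*$, then $\xi_0 R^E_p g_0 = h^* \varphi_p R^E_p g_0 = \dbar(h^* R^E_{p-1} g_0)$, and by \eqref{eq:aw-duality} applied outside $\supp g$, the current $h^* R^E_{p-1} g_0$ is supported on $\supp g \subseteq Z$, so this is $\dbar$ of a current in $C^{0,p-1}_Z$. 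One also needs to check that $\xi_0 R^E_p g_0$ is $\dbar$-closed with support on $Z$; the former follows from $\dbar R^E_p = \varphi_{p+1} R^E_{p+1}$ (the $k=p+1$ case of \eqref{eq:nablaR}) together with $\varphi_{p+1}^* \xi_0 = 0$, and the latter again from \eqref{eq:aw-duality}. Finally, independence of the choice of free resolution $(E,\varphi)$ follows by applying the functoriality identity to $\alpha = \id_G$ with two different resolutions and a comparison morphism between them, as provided by Proposition~\ref{prop:comparison-morphism}.
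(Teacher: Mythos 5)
Your proposal is correct and follows essentially the same route as the paper: apply the comparison formula of Theorem~\ref{thm:Rcomparison}, kill the $M_{p+1}$ term via $\varphi_{p+1}^*\xi_0 = 0$, and kill $M_p f_0$ via the dimension principle using that its support lies inside a set of codimension $\geq p$ (because $M_p\psi_1=0$ and $f\in F_{(p)}$), with well-definedness and resolution-independence derived exactly as you indicate. One small divergence worth noting: for independence of $\xi_0$ the paper shows the extra term $\varphi_p R^E_p g_0 = \dbar(R^E_{p-1} g_0)$ vanishes already as a current, since $R^E_{p-1}g_0$ is a pseudomeromorphic $(0,p-1)$-current supported in codimension $\geq p$ and hence is zero by Proposition~\ref{proppmdim}, whereas you only argue that it is $\dbar$ of a current in $C^{0,p-1}_Z$ and so trivial in $H^p_Z(\Ok)$; your weaker statement is sufficient for well-definedness, but the current-level vanishing is cleaner and parallels the argument you already make for $M_p f_0$.
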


Note that in order to prove functoriality, it would be enough to prove that \eqref{eq:functorial-currents}
holds in \eqref{eq:lc-current-repr}, i.e., as cohomology classes, so that \eqref{eq:functorial-currents} holds
modulo $\dbar$ of a current vanishing on $Z$.

\begin{proof}
    The fact that \eqref{eq:pairing-aw} is independent of the choice of representative $g_0$ follows just as
    in \cite{AndNoeth}*{Theorem~1.2}. That it is independent of the choice of $\xi_0$ is proven as follows. If
    $\xi_1$ is another representative of $[\xi_0]$, then $\xi_0$ and $\xi_1$ differ by something in $\im \varphi_p^*$. Thus, it
    suffices to prove that
    \begin{equation*}
        \varphi_p R^E_p g_0 = 0.
    \end{equation*}
    By \eqref{eq:nablaR}, $\varphi_p R^E_p = \dbar R^E_{p-1}$, so using this, and the fact that $g_0$ is $\dbar$-closed,
    \begin{equation*}
        \varphi_p R^E_p g_0 = \dbar ( R^E_{p-1} g_0 ).
    \end{equation*}
    Since $g \in G_{(p)}$, $g = 0$ outside of $\supp G_{(p)}$ which has codimension $\geq p$,
    so $\supp R^E_{p-1} g_0$ is contained in a variety of codimension $\geq p$, and since it is a pseudomeromorphic
    $(0,p-1)$-current, it is $0$ by the dimension principle, Proposition~\ref{proppmdim}, so $\varphi_p R^E_p g_0 = 0$.

    Regarding independence of the choice of $(E,\varphi)$ with Hermitian metrics on $E$, it is
    a special case of functoriality in \eqref{eq:functorial-currents}, which we prove below, when we take
    $a : (K,\psi) \to (E,\varphi)$ to be the morphism induced by the identity map on
    $G$, if $(K,\psi)$ and $(E,\varphi)$ are both free resolutions of $G$.

    We now prove \eqref{eq:functorial-currents}.
    Note that if $f \in F$ and $f_0 \in K_0$ satisfies $\pi(f_0) = f$ (cf., \eqref{eq:grepr}),
    then $\pi( a_0 f_0 ) = \alpha f$. Thus, by the comparison formula, \eqref{eqRcomparison2},
    \begin{align*}
        &\la \alpha f , \xi \ra_{AW} = \xi_0 R^E_p a_0 f_0  = \xi_0 a_p R^K_p f_0 + \xi_0 \varphi_{p+1} M_{p+1} f_0 - \dbar (\xi_0 M_p f_0) = \\
        = & \la f,\alpha^* \xi \ra_{AW} + \xi_0 \varphi_{p+1} M_{p+1} f_0 - \dbar( \xi_0 M_p f_0),
    \end{align*}
    and it remains to prove that the last two terms vanish.
    We get that $\xi_0 \varphi_{p+1} M_{p+1} f_0 = 0$ since $\varphi_{p+1}^* \xi_0 = 0$. In addition, outside of
    $\supp F_{(p)}$, $f_0 = \psi_1 f_1$ for some $f_1$, so $\supp M_p f_0 \subseteq \supp F_{(p)}$, which has codimension $p$.
    Thus, $M_p f_0 = 0$, since its support has codimension $\geq p$, and it is a pseudomeromorphic $(0,p-1)$-current,
    so it is $0$ by the dimension principle, Proposition~\ref{proppmdim}.
\end{proof}

\begin{ex} \label{ex:aw-pairing-ci}
    We now consider the most basic case when $G = \Ok/I$, where $I$ is a complete intersection ideal
    of codimension $p$, given as $I = J(f_1,\ldots,f_p)$. Then, the Koszul complex $(K,\psi)$ of $f$
    is a free resolution of $\Ok/I$. Using the representation \eqref{eq:ext-ci-isom-explicit} of $\Ext^p(\Ok/I,\Ok)$
    together with the expression \eqref{eq:coincide-aw-ch} for the current $R^K_p$, we get that the pairing
    \eqref{eq:pairing-aw} in this case becomes
    \begin{equation*}
        \la g, h(e_1 \wedge \cdots \wedge e_p)^* \ra_{AW} = g h \dbar\frac{1}{f_p} \wedge \cdots \wedge \dbar \frac{1}{f_1}.
    \end{equation*}
    In this case, by \eqref{eq:pairing-ch}, the pairing \eqref{eq:pairing-grothendieck} coincides indeed
    with the pairing \eqref{eq:pairing-aw} using the canonical isomorphism \eqref{eq:ext-ci-isom-explicit}
    between the different representations of $\Ext^p(\Ok/I,\Ok)$ being used to define the pairings.
\end{ex}

\begin{prop} \label{prop:aw-gr-coincide}
    Let $G$ be a finitely generated $\Ok$-module, and let $Z \subseteq (\C^n,0)$ be a subvariety of pure codimension $p$.
    Under the canonical isomorphism between the representations of $H^p_Z(\Ok)$ induced by the isomorphism \eqref{eq:extrepr},
    the pairings \eqref{eq:pairing-grothendieck} and \eqref{eq:pairing-aw} coincide.
\end{prop}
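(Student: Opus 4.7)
The plan is to use functoriality of both pairings to reduce to the complete intersection case, where the identification is essentially already recorded in Example~\ref{ex:aw-pairing-ci} and \eqref{eq:pairing-ch}.

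By Lemma~\ref{lma:complete-intersection-surjective}, there exists a complete intersection ideal $I \subseteq \ann G_{(p)}$ of codimension $p$ together with a morphism $\alpha : (\Ok/I)^r \to G$ that is surjective onto $G_{(p)}$. Given $g \in G_{(p)}$ and $\xi \in \Ext^p(G,\Ok)$, pick a lift $f \in (\Ok/I)^r$ with $\alpha(f) = g$. Lemmas~\ref{lma:functorial-grothendieck} and \ref{lma:functorial-aw} give
\[
    \la g, \xi \ra_{Gr} = \la f, \alpha^* \xi \ra_{Gr}, \qquad \la g, \xi \ra_{AW} = \la f, \alpha^* \xi \ra_{AW},
\]
so it suffices to compare the two pairings on $F = (\Ok/I)^r$. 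Decomposing $f = \sum_i f_i e_i$ along the coordinate inclusions $\Ok/I \hookrightarrow (\Ok/I)^r$ and using $\Ok$-bilinearity together with the canonical splitting $\Ext^p((\Ok/I)^r, \Ok) \cong \Ext^p(\Ok/I, \Ok)^r$ reduces the problem further to the single complete intersection quotient $F = \Ok/I$.

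In this case, take the Koszul complex $(K,\psi)$ of generators $(f_1,\dots,f_p)$ of $I$ as the free resolution, so that $\Ext^p(\Ok/I,\Ok)$ is represented via \eqref{eq:ext-ci-isom-explicit}. Example~\ref{ex:aw-pairing-ci} together with \eqref{eq:coincide-aw-ch} evaluates the Andersson-Wulcan pairing on $(g, h(e_1 \wedge \cdots \wedge e_p)^*)$ as the Coleff-Herrera product $g h \, \dbar(1/f_p) \wedge \cdots \wedge \dbar(1/f_1)$, and \eqref{eq:pairing-ch} evaluates the Grothendieck pairing \eqref{eq:pairing-grothendieck-ci} to the same expression, once the canonical isomorphism \eqref{eq:extrepr3} (the explicit form of \eqref{eq:extrepr} in the complete intersection case) is used to translate between the two representations of $H^p_Z(\Ok)$.

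The reduction argument is the only substantial step, and since the required functoriality is already established, the main bookkeeping is to verify that the canonical isomorphism \eqref{eq:extrepr} used in identifying the two representations of $H^p_Z(\Ok)$ commutes with pullback along $\alpha$. This is just the naturality of the comparison between a free resolution of $G$ and an injective resolution of $\Ok$ in the definition of $\Ext$, so no serious obstacle is expected; everything reduces cleanly to the basic complete intersection computation already recorded earlier in the paper.
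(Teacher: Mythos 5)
Your proof is correct and follows the same route as the paper's primary argument: reduce to $G=\Ok/I$ with $I$ a complete intersection via Lemma~\ref{lma:complete-intersection-surjective} and functoriality of both pairings (Lemmas~\ref{lma:functorial-grothendieck} and \ref{lma:functorial-aw}), then invoke the explicit identifications from Example~\ref{ex:aw-pairing-ci}, \eqref{eq:pairing-ch}, and \eqref{eq:extrepr3}. Your intermediate reduction from $(\Ok/I)^r$ to $\Ok/I$ via the coordinate inclusions is a small elaboration the paper leaves implicit, and your concern about the compatibility of \eqref{eq:extrepr} with pullback is indeed not a real obstruction since that isomorphism is natural; the paper also offers a second, independent proof via Andersson's description \eqref{eq:extrepr3-gen} of the canonical isomorphism in terms of $R^E_p$, which you did not pursue.
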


\begin{proof}
    It follows from Examples~\ref{ex:aw-pairing-ci} that this holds when $G = \Ok/I$, where $I$ is a complete intersection
    ideal of codimension $p$. By functoriality, and taking the morphism $\alpha : (\Ok/I)^r \to G$ from
    Lemma~\ref{lma:complete-intersection-surjective}, which is surjective onto $G_{(p)}$, it follows from
    functoriality of both pairings that they coincide also in general.

    Alternatively, one can prove the proposition in the following way. In \cite{AndNoeth}*{Theorem~1.5},
    Andersson proves the following generalization of \eqref{eq:extrepr2}, that for $G$ a finitely generated $\Ok$-module
    of pure codimension $p$ and $(E,\varphi)$ a free resolution of $G$, the canonical isomorphism
    \begin{equation} \label{eq:extrepr2-gen}
        H^p(\Hom(E_\bullet,\Ok)) \cong H^p(\Hom(G,C^{0,\bullet}))
    \end{equation}
    is given by
    \begin{equation} \label{eq:extrepr3-gen}
        \xi = [\xi_0] \mapsto \xi_0 R^E_p.
    \end{equation}
    Here, $R^E_p$ is considered as $\Hom(G,C^{0,\bullet})$-valued by $R^E_p(g) := R^E_p g_0$, where
    $g_0$ is a representative of $g \in G$, as in \eqref{eq:grepr}.
    We now let $(K,\psi)$ be a free resolution of $\Ok/J$, and consider the following commutative diagram
    coming from the morphism $\epsilon_g : \Ok/J \to G$, which induces morphisms $\epsilon_g^* : \Ext^p(G,\Ok) \to \Ext^p(\Ok/J,\Ok)$, and the canonical
    isomorphisms \eqref{eq:extrepr2} and \eqref{eq:extrepr2-gen} between the different representations of $\Ext$.
    \begin{equation*}
\xymatrix{
    H^p(\Hom(E_\bullet,\Ok)) \ar[r]^{\cong} \ar[d]^{\epsilon_g^*} & H^p(\Hom(G,C^{0,\bullet})) \ar[d]^{\epsilon_g^*} \\
    H^p(\Hom(K_\bullet,\Ok)) \ar[r]^{\cong}        & H^p(\Hom(\Ok/J,C^{0,\bullet})).
}
    \end{equation*}
    The map $\epsilon_g^*$ on the right is just precomposition with the map $\epsilon_g$, so considering the element
    $\xi = [\xi_0] \in H^p(\Hom(E_\bullet,\Ok))$, this will be mapped in the diagram as follows:
    \begin{equation*}
\xymatrix{
    [\xi_0]  \ar[r] \ar[d] & [\xi_0 R^E_p] \ar[d]  \\
    \epsilon_g^* [\xi_0]  \ar[r]       & [\xi_0 R^E_p g_0].
}
    \end{equation*}
    Finally, using the representation \eqref{eq:lc-current-repr}, the map
    $\pi_1 : \Hom(\Ok/J,C^{0,\bullet}) \to H^p_Z(\Ok)$ acts just as the identity on currents,
    i.e.,
    \begin{equation*}
        \pi_1 [\xi_0 R^E_p g_0] = [\xi_0 R^E_p g_0] = \la \xi, g\ra_{AW} \in H^p_Z(\Ok).
    \end{equation*}
\end{proof}

\begin{ex} \label{ex:ci-reduction}
    As we do in Section~\ref{sect:linkage}, and as well in the proof of the preceding lemma,
    we can reduce the expression of the pairing to the complete intersection case, which when
    expressed in terms of currents thus becomes an expression in terms of Coleff-Herrera products.
    We take the map $\alpha : (\Ok/I)^r \to G$ as in Lemma~\ref{lma:complete-intersection-surjective},
    and for $g \in G_{(p)}$, we take $h \in (\Ok/I)^r$ such that $\alpha(h) = g$, which is possible
    since $\alpha$ is surjective onto $G_{(p)}$.
    Then, by functoriality of the pairing and Example~\ref{ex:aw-pairing-ci},
\begin{equation} \label{eq:pairing-lj}
    \la g, \xi \ra_{AW} = \dbar \frac{1}{f_p} \wedge \dots \wedge \dbar \frac{1}{f_1} \wedge \xi_0 a_p( h e_1 \wedge \dots \wedge e_p),
\end{equation}
    where $\xi_0$ is as in \eqref{eq:xirepr}.
\end{ex}

\begin{ex} \label{ex:artinian}
    In the special case above, when $G$ is \emph{Artinian}, i.e., when $\supp G = \{ 0 \}$, then by the Nullstellensatz,
    one can always choose the complete intersection ideal $I$ in Lemma~\ref{lma:complete-intersection-surjective} to be of
    the form $I = J(z_1^{N_1},\dots,z_n^{N_n})$, and one thus gets a representation
\begin{equation} \label{eq:pairing-lj-art}
    \la g, \xi \ra_{AW} = \dbar \frac{1}{z_1^{N_1}} \wedge \dots \wedge \dbar \frac{1}{z_n^{N_n}} \wedge \xi_0 a_n( h e_1 \wedge \dots \wedge e_n).
\end{equation}
\end{ex}

\begin{remark}
    The equality \eqref{eq:pairing-lj} was one of the starting points of this article.
    If we in particular consider the Artinian case as in Example~\ref{ex:artinian}, then the existence
    of the pairing \eqref{eq:pairing-lj-art} is elementary, using only the Nullstellensatz
    and the syzygy theorem, while the existence of the currents in \eqref{eq:pairing-aw} is rather non-elementary
    also in this special situation (without using the comparison formula).
    One of the starting aims in writing this article was then to try to find also a more elementary proof of the
    non-degeneracy of the pairing defined by the right-hand side of \eqref{eq:pairing-lj-art}, which is then achieved
    by the elementary proofs of non-degeneracy in the case when $f = (z_1^{N_1},\ldots,z_n^{N_n})$, combined with
    Lemma~\ref{lma:reduction-second}, which reduces non-degeneracy in the second argument to this case,
    and Lemma~\ref{lma:reduction-first}, which with the help of the theory of linkage reduces non-degeneracy
    in the first argument to this case.
\end{remark}

\begin{prop} \label{prop:nondegenerate-aw}
    If the pairing \eqref{eq:general-duality-pairing} is given as $\la \bullet, \bullet \ra_{AW}$,
    as defined in \eqref{eq:pairing-aw}, then the induced pairing \eqref{eq:general-duality-pairing-descended}
    is non-degenerate.
\end{prop}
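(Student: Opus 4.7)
The plan is to reduce this statement to results already established in the paper. By Proposition~\ref{prop:aw-gr-coincide}, the Andersson-Wulcan pairing $\la\bullet,\bullet\ra_{AW}$ coincides, under the canonical isomorphism between the representations of $\Ext^p(G,\Ok)$ via \eqref{eq:extrepr} and of $H^p_Z(\Ok)$ via \eqref{eq:lc-current-repr}, with the Grothendieck pairing $\la\bullet,\bullet\ra_{Gr}$. Since Proposition~\ref{prop:reduction-codim-p} already proves that the descended pairing \eqref{eq:general-duality-pairing-descended} is non-degenerate when realized as the Grothendieck pairing, the same statement transfers immediately to the Andersson-Wulcan pairing. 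This is the most economical route, and is essentially a bookkeeping exercise once Proposition~\ref{prop:aw-gr-coincide} is in hand.

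For a more self-contained proof adhering to the current-theoretic spirit of Section~\ref{sect:aw-pairing}, one would first reduce to the case where $G$ has pure codimension $p$. By Lemma~\ref{lma:ses-extp} there is an injection $\Ext^p(G,\Ok)^{(p)} \hookrightarrow \Ext^p(G_{(p)},\Ok)$, and by functoriality of the Andersson-Wulcan pairing (Lemma~\ref{lma:functorial-aw}) the pairing for $G$ factors through the one for $G_{(p)}$ induced by the inclusion $G_{(p)} \hookrightarrow G$. Since $G_{(p)}$ has pure codimension $p$, this reduces the problem to the case $G=G^{(p)}$, which is precisely the setting of \cite{AndNoeth}.

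In the pure codimension case, non-degeneracy in the second argument can be extracted from Andersson's result \cite{AndNoeth}*{Theorem~1.5}, which asserts that the map $\xi = [\xi_0] \mapsto [\xi_0 R^E_p]$ realizes the canonical isomorphism \eqref{eq:extrepr2-gen}, so that $\xi_0 R^E_p = 0$ as a Coleff-Herrera valued morphism on $G$ forces $\xi = 0$. Non-degeneracy in the first argument is what I expect to be the main obstacle in this direct approach: given $g \in G^{(p)}$ with $\xi_0 R^E_p g_0 = 0$ for every $\xi_0 \in \ker \varphi_{p+1}^*$, one wants to conclude $g=0$. The natural tool is the Andersson-Wulcan duality principle \eqref{eq:aw-duality} combined with the dimension principle (Proposition~\ref{proppmdim}) to control supports, along the lines of \cite{AndNoeth}*{Theorem~1.2}; alternatively, one can invoke the sheaf-theoretic dual statement through the composition \eqref{eq:local-duality-isomorphism2} together with Proposition~\ref{prop:ssreplacement} to conclude injectivity of the induced morphism $G \to \Hom(\Ext^p(G,\Ok), H^p_Z(\Ok))$. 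Either way, combining with the preliminary reduction completes the proof.
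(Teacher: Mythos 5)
Your primary route---transferring non-degeneracy from the Grothendieck pairing via Proposition~\ref{prop:aw-gr-coincide} and Proposition~\ref{prop:reduction-codim-p}---is logically valid, but it misses the point of this proposition. The paper presents non-degeneracy of the Andersson--Wulcan pairing as one of several \emph{independent} proofs of non-degeneracy (this is spelled out in the introduction and in the outline of the proofs), alongside the homological-algebra proof of Section~\ref{sect:general-pairing}, where Proposition~\ref{prop:reduction-codim-p} lives. Piping the current-theoretic pairing through Proposition~\ref{prop:aw-gr-coincide} back into Proposition~\ref{prop:reduction-codim-p} collapses the second proof into the first; so as a derivation of Proposition~\ref{prop:nondegenerate-aw} it is complete, but as an account of what the paper actually does it is not, and it forfeits the independent residue-theoretic argument that the section is there to provide.

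The paper's actual argument stays current-theoretic throughout. Non-degeneracy in the second argument is reduced to the complete-intersection case by Lemma~\ref{lma:reduction-second}, where it becomes the Coleff--Herrera duality theorem, as in Example~\ref{ex:aw-pairing-ci}. For non-degeneracy in the first argument---the part your self-contained sketch gestures at but leaves open---the essential step, which your proposal does not identify, is to argue \emph{outside the singularity set $Z_{p+1}$}. There the module $G$ admits a free resolution $(E,\varphi)$ of length at most $p$, so $\ker\varphi_{p+1}^*=E_p^*$ in full, and the hypothesis that $\xi_0 R^E_p g_0 = 0$ for all $\xi_0 \in \ker\varphi_{p+1}^*$ then forces $R^E_p g_0 = 0$ outright. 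The lower-degree pieces $R^E_k g_0$, $k<p$, vanish by the dimension principle (Proposition~\ref{proppmdim}) since their support lies in $\supp G_{(p)}$, of codimension $\geq p$. Hence $R^E g_0 = 0$, and \eqref{eq:aw-duality} gives $g_0 \in \im\varphi_1$, i.e.\ $g=0$, outside $Z_{p+1}$; as $\codim Z_{p+1} \geq p+1$ this yields $g \in G_{(p+1)}$. Without shrinking away from $Z_{p+1}$ one cannot pass from vanishing against all $\xi_0 \in \ker\varphi_{p+1}^*$ to the vanishing of $R^E_p g_0$ itself, so this is the missing idea your direct sketch would need. Your fallback of invoking \eqref{eq:local-duality-isomorphism2} and Proposition~\ref{prop:ssreplacement} again routes through the homological proof and has the same drawback as your primary approach.
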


\begin{proof}
    For $G = \Ok/I$, where $I$ is a complete intersection ideal, non-degeneracy in both arguments follows from the duality
    theorem for Coleff-Herrera products, as explained in Example~\ref{ex:aw-pairing-ci}.
    Thus, by Lemma~\ref{lma:reduction-second}, the pairing is non-degenerate in the second argument.

    To prove non-degeneracy in the first argument, we assume first that we are outside of $Z_{p+1}$.
    Then $G$ has a free resolution of length $\leq p$. Since the pairing is independent of the free resolution,
    we can thus assume that the free resolution $(E,\varphi)$ of $G$ has length $\leq p$. If $g \in G_{(p)}$,
    then $g = 0$ outside of $\supp G_{(p)}$, so $R^E_k g_0 = 0$ outside of $\supp G_{(p)}$ which has codimension $\geq p$.
    Hence, $R^E_k g_0 = 0$ for $k < p$ since it is a pseudomeromorphic $(0,k)$-current with support on a subvariety of
    codimension $\geq p$, and thus is $0$ by the dimension principle, Proposition~\ref{proppmdim}.
    In addition, if $(E,\varphi)$ has length $\leq p$, then $\ker \varphi_{p+1}^* = E_p^*$, so if $\xi R^E_p g_0 = 0$
    for all $\xi \in \ker \varphi_{p+1}^*$, then $R^E_p g_0 = 0$. Thus, $R^E g_0 = 0$, so $g = 0$ by \eqref{eq:aw-duality}.
    To conclude, $g = 0$ outside of $Z_{p+1}$ which has codimension $\geq p+1$, by \eqref{eq:Ext-Zk} and Proposition~\ref{prop:ext-ass-primes},
    so $g \in G_{(p+1)}$.
\end{proof}

\begin{ex} \label{ex:artinian-monomial}
    The explicitness of the pairing \eqref{eq:pairing-lj-art} depends on the ability of calculating the
    morphism $a_n$. In \cite{LW1}, we show that if $J$ is an \emph{Artinian monomial ideal}, i.e., an Artinian
    ideal generated by monomials, then one can explicitly compute the morphism $a$ when $(E,\varphi)$ is the
    so-called Hull resolution of $\Ok/J$.

In \cite{LW1}, we then use the explicit expression of $a$ to express the current $R^E_n$, which thus
also give an explicit description of the pairing \eqref{eq:pairing-lj-art}. For example, when $J \subseteq \Ok_{\C^2_{z,w},0}$ is the ideal
$J = J(z^a,z^b w^c, w^d)$, where $b < a$ and $c < d$, then $\Ok/J$ has the Hull resolution
\begin{equation*}
    0 \to \Ok^2 \stackrel{\varphi_2}{\longrightarrow} \Ok^3 \stackrel{\varphi_1}{\longrightarrow} \Ok \to \Ok/J \to 0,
\end{equation*}
where
\begin{equation*}
    \varphi_2 = \left[ \begin{array}{cc} -w^c & 0 \\ z^{a-b} & -w^{d-c} \\ 0 & z^b \end{array} \right]
    \text{ and }
    \varphi_1 = \left[ \begin{array}{ccc} z^a & z^b w^c & w^d \end{array} \right],
\end{equation*}
and the current $R^E_2$ is
\[
    \def\arraystretch{2.2}
    R^E_2 = \left[ \begin{array}{c} \dbar \dfrac{1}{w^c} \wedge \dbar \dfrac{1}{z^a} \\ \dbar \dfrac{1}{w^d} \wedge \dbar \dfrac{1}{z^b} \end{array} \right].
\]
If $\xi = [(\xi_1,\xi_2)^*] \in (\Ok^2)^*/(\im \varphi_2^*) \cong \Ext^2(\Ok/J,\Ok)$, then the pairing \eqref{eq:pairing-lj-art} is
\begin{equation*}
    \la g,\xi \ra_{AW} = \xi R^E_2 g = \xi_1 \dbar \dfrac{1}{w^c} \wedge \dbar \dfrac{1}{z^a} g + \xi_2 \dbar \dfrac{1}{w^d} \wedge \dbar \dfrac{1}{z^b} g.
\end{equation*}
The non-degeneracy of the pairing in the first argument thus corresponds to the decomposition of $J$,
\begin{equation*}
    J  = \ann \left( \dbar \dfrac{1}{w^c} \wedge \dbar \dfrac{1}{z^a}\right) \cap \ann \left( \dbar \dfrac{1}{w^d} \wedge \dbar \dfrac{1}{z^b}\right) = J(z^a,w^c) \cap J(z^b,w^d).
\end{equation*}
\end{ex}

\begin{remark} \label{rem:chz-pairing}
    We finally also notice the difference in formulation of
    Theorem~\ref{thm:local-duality} and the main theorem in \cite{AndNoeth}.
    By \eqref{eq:ch-loc-coh}, we could just as well formulate
    \eqref{eq:local-duality-pairing-descended} as that there exists a
    non-degenerate pairing
    \begin{equation}
        G/G_{(p+1)} \times \Ext^p(G,\Ok) \to CH_Z,
    \end{equation}
    which is indeed the formulation used in \cite{AndNoeth}.
    Note that just as in \cite{AndNoeth}, if we use \eqref{eq:pairing-aw}
    to define the pairing, then this gives directly the representative
    in $CH_Z$.

    Similarly, Theorem~\ref{thm:local-duality-surjective} and
    Theorem~\ref{thm:local-duality-general} can be reformulated as
    that there exists a canonical non-degenerate pairing
    \begin{equation} \label{eq:general-duality-pairing-ch}
        G_{(p)} \times \Ext^p(G,\Ok) \to CH_Z,
    \end{equation}
    such that the induced injective morphisms
    \begin{equation} \label{eq:general-duality-surjection-ch1}
        G^{(p)} \to \Hom(\Ext^p(G,\Ok)^{(p)},CH_Z)
    \end{equation}
    and
    \begin{equation} \label{eq:general-duality-surjection-ch2}
        \Ext^p(G,\Ok)^{(p)} \to \Hom(G^{(p)},CH_Z)
    \end{equation}
    are surjective if and only if $G^{(p)}$ and $\Ext^p(G,\Ok)^{(p)}$
    are $S_2$ respectively, and if $G$ has codimension $\geq p$,
    the latter is automatic, and one has an isomorphism
    \begin{equation} \label{eq:general-duality-surjection-ch-p}
        \Ext^p(G,\Ok) \cong \Hom(G^{(p)},CH_Z).
    \end{equation}
\end{remark}

\section{Cohomological residues} \label{sect:coh-residues}

In this section, we prove Theorem~\ref{thm:cohomological-duality}, and
describe the alternative description of the pairing \eqref{eq:cohomological-duality-pairing}
when $G$ has codimension $\geq p$.

\subsection{Cohomological residues of Lundqvist} \label{ssect:lund}

The description of the pairing \eqref{eq:cohomological-duality-pairing} when $G$ has codimension $\geq p$
is based on a construction by Lundqvist in \cite{Lund1} and \cite{Lund2}. In these articles, only the
case when $G$ is of the form $G = \Ok/J$, where $J$ has pure codimension $p$ is considered, but the
construction of the pairing works the same also in this more general setting.

Let $G$ be a finitely generated $\Ok$-module with a free resolution $(E,\varphi)$ of length $N$,
equipped with some Hermitian metrics, and let $Z = Z(\supp G)$. We let $\Omega$ be a neighbourhood
of $0$ such that the free resolution exists and is pointwise exact on $\Omega\setminus Z$.
By \cite{AW1} (cf., \cite{Lund2}*{Section~2}), there exist smooth $\Hom(E_0,E_k)$-valued $(0,k-1)$-forms
$u_k$ for $k=1,\dots,N$ and $\Hom(E_1,E_k)$-valued $(0,k-2)$-forms $u^1_k$ for $k=2,\dots,N$ on $\Omega \setminus Z$ such that
\begin{align} 
    \begin{gathered}\label{eq:lundqvist-udef}
    \varphi_1 u_1 = \Id_{E_0} \text{, }\quad \dbar u_N = 0,\quad \varphi_2 u^1_2 = u_1 \varphi_1 + \Id_{E_1} \\
    \text{ and }\quad \varphi_{k+1} u_{k+1}^1 = u_k \varphi_1 + \dbar u_k^1 \text{ for $k=2,\dots,N-1$.}
    \end{gathered}
\end{align}
In \cite{Lund2}, this is expressed in the more compact notation $\nabla_{{\rm End}(E)}u = \Id_{E}$.

The main theorem in \cite{Lund2}, Theorem~3.3, can be reformulated as saying that if $G$ is of the form $G = \Ok/J$,
where $J$ has pure codimension $p$, then
\begin{equation} \label{eq:lund-reformulation}
    g \in \Ok/J \text{ if and only if } \int \xi_0 u_p g \wedge \dbar \beta = 0,
\end{equation}
for all $\beta \in H^{n,n-p}_{Z^c}$ and $[\xi_0] \in H^p(\Hom(E_\bullet,\Ok)) \cong \Ext^p(\Ok/J,\Ok)$.
In the case when $G = \Ok/I$, where $I$ is a complete intersection ideal of codimension $p$, then $u_p$ coincides
with the form $B_f$ defined by Passare, as in the introduction, and thus, the result of Lundqvist is a generalization of the result
of Passare.

For future reference, we remark that the current $R^E_p$ in Section~\ref{ssect:aw-currents} is defined as
\begin{equation} \label{eq:RU}
    R^E_p = \varphi_{p+1} U_{p+1} - \dbar U_p,
\end{equation}
where $U_p$ and $U_{p+1}$ are currents on $\Omega$ which are the so-called standard extensions
of $u_p$ and $u_{p+1}$, which in particular means that $u_p$ and $u_{p+1}$ coincide with $U_p$ and $U_{p+1}$
where they are smooth.

\subsection{A definition of the pairing \eqref{eq:cohomological-duality-pairing}} \label{ssect:lund-def}

Even though the main result is not formulated in this way, the pairing still appears
in the construction of Lundqvist, see \cite{Lund2}*{(2) and (9)}.
Let $G$ be an finitely generated $\Ok$-module of codimension $\geq p$, and let $Z \supseteq \supp G$
be of pure codimension $p$.
The \emph{Lundqvist pairing}
\begin{equation*}
    G \times \Ext^p(G,\Ok) \to \Hom_\C(H^{n,n-p}_{Z^c},\C)
\end{equation*}
is defined by
\begin{equation} \label{eq:pairing-lu}
    \la g, \xi \ra_{Lu}(\beta) := \int \xi_0 u_p g_0 \wedge \dbar \beta,
\end{equation}
where $\xi_0$ and $g_0$ are as in \eqref{eq:xirepr} and \eqref{eq:grepr},
and $\beta \in H^{n,n-p}_{Z^c}$. It is implicitly assumed that
$\beta$ has small enough support such that $\xi_0$, $u_p$ and $g_0$ are all defined
on the support of $\beta$.

\begin{lma}
    The pairing \eqref{eq:pairing-lu} is well-defined, i.e., independent of the choice of representative $g_0 \in E_0$
    of $g$, and $\xi_0 \in \ker \varphi_{p+1}^*$ of $[\xi_0]$.
\end{lma}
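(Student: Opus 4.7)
The plan is to prove both independences by Stokes' theorem, exploiting the hypothesis that $\dbar\beta$ has compact support contained in $\Omega\setminus Z$: since $u_p$, $u_{p-1}$ and $u^1_p$ are smooth on $\Omega\setminus Z$, any product of these with $\dbar\beta$ (or with $\beta$ restricted to a neighbourhood of $\supp\dbar\beta$) is a smooth compactly supported form, so classical Stokes applies with no regularisation needed.

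First, for independence of the choice of $g_0$: if $g_0'=g_0+\varphi_1 h$ for some holomorphic section $h$ of $E_1$, I would show $\int\xi_0 u_p \varphi_1 h\wedge\dbar\beta=0$. The plan is to invoke the relation $\varphi_{p+1}u^1_{p+1}=u_p\varphi_1+\dbar u^1_p$ from \eqref{eq:lundqvist-udef} to rewrite $u_p\varphi_1=\varphi_{p+1}u^1_{p+1}-\dbar u^1_p$. Using $\varphi_{p+1}^*\xi_0=0$ and the holomorphicity of $\xi_0$ and $h$, the integrand reduces to $-\dbar(\xi_0 u^1_p h)\wedge\dbar\beta=-d(\xi_0 u^1_p h\wedge\dbar\beta)$ (the $\partial$-part has the wrong bidegree, and $\dbar^2\beta=0$); Stokes then gives zero.

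For independence of the choice of $\xi_0$: if $\xi_0'=\xi_0+\eta\varphi_p$ with $\eta\in\Hom(E_{p-1},\Ok)$, I would show $\int\eta\varphi_p u_p g_0\wedge\dbar\beta=0$. For $p\geq 2$, the identity $\varphi_p u_p=\dbar u_{p-1}$, which is part of $\nabla u=\Id_{E_0}$ (and which is the same identity used in \eqref{eq:RU} to define $R^E_p$), together with the holomorphicity of $\eta$ and $g_0$, lets me rewrite $\eta\varphi_p u_p g_0=\dbar(\eta u_{p-1}g_0)$, and Stokes finishes. The boundary case $p=1$ is even simpler: $\varphi_1 u_1=\Id_{E_0}$ collapses the integrand to $\eta g_0\wedge\dbar\beta=\dbar(\eta g_0\,\beta)$ (with $\eta g_0$ holomorphic), and Stokes again gives zero.

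The main obstacle I anticipate is keeping straight the regularity needed to apply Stokes: the forms $u^1_p h$ and $u_{p-1}g_0$ are not globally smooth (they blow up along $Z$), but the key observation is that only their product with $\dbar\beta$ enters the integral, and this product is genuinely smooth and compactly supported because $\dbar\beta\equiv 0$ in some neighbourhood of $Z$. Apart from this regularity bookkeeping and the edge case $p=1$ (which I handle separately), both independences are essentially a bilinear version of the standard argument that $\xi_0 R^E_p g_0$ is a well-defined cohomology class.
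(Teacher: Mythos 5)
Your proof is correct and follows essentially the same approach as the paper's: for each independence you decompose the change of representative, invoke the structural identities coming from $\nabla_{{\rm End}(E)}u=\Id_E$ (namely $u_p\varphi_1 = \varphi_{p+1}u^1_{p+1}-\dbar u^1_p$ and $\varphi_p u_p = \dbar u_{p-1}$) together with $\xi_0\in\ker\varphi_{p+1}^*$ to produce a $\dbar$-exact integrand, and then apply Stokes, treating $p=1$ separately via $u_1\varphi_1$ and $\varphi_1 u_1$. The paper's proof proceeds in exactly this way, and your careful remark about the regularity bookkeeping (that only the product with $\dbar\beta$, which is supported off $Z$ where the $u$-forms are smooth, enters the integral) is precisely the point that makes the classical Stokes argument legitimate.
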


\begin{proof}
    To see that this pairing indeed is well-defined, we note first that
    if $g_0 = \varphi_1 g_1$, then if $p > 1$, since $u_p \varphi_1 = \varphi_p u_{p-1} - \dbar u^1_p$ by \eqref{eq:lundqvist-udef},
    and $\xi_0 \varphi_p = 0$, we get that $\xi_0 u_p g_0 = \dbar(\xi_0 u^1_p g_1)$. Thus, by Stokes' theorem,
    \begin{equation*}
        \int \xi_0 u_p g_0 \wedge \dbar \beta = \int \dbar (\xi_0 u^1_p g_1) \wedge \dbar \beta = 0.
    \end{equation*}
    If $p = 1$, then $u_1 \varphi_1 = \varphi_2 u^1_2-\Id_{E_1}$, and since $\xi_0 \varphi_2 = 0$,
    \begin{equation*}
        \int \xi_0 u_1 g_0 \wedge \dbar \beta = -\int \xi_0 g_1 \dbar \beta = 0,
    \end{equation*}
    where the last equality holds by Stokes' theorem, since $\xi_0 g_1$ is holomorphic on $\supp \beta$.
    Thus, \eqref{eq:pairing-lu} is independent of the representative $g_0 \in E_0$.

    Similarly, if $\xi_0 = \varphi_p^* \xi_1$, then if $p > 1$, we have that $\varphi_p u_p = \dbar u_{p-1}$, so
    \begin{equation*}
        \int \xi_0 u_p g_0 \wedge \dbar \beta = \int \xi_1 \varphi_p u_p \wedge \dbar \beta
        = \int \dbar(\xi_1 u_{p-1}) \wedge \dbar \beta = 0,
    \end{equation*}
    where the last equality follows by Stokes' theorem.
    If $p = 1$, then $\varphi_1 u_1 = \Id_{E_0}$, so
    \begin{equation*}
        \int \xi_0 u_1 g_0 \dbar \beta = \int \xi_1 g_0 \dbar \beta = 0,
    \end{equation*}
    by Stokes' theorem, since $\xi_1 g_0$ is holomorphic on $\supp \beta$.
\end{proof}

\begin{prop} \label{prop:lu-functorial}
    The pairing \eqref{eq:pairing-lu} is functorial in $G$.
\end{prop}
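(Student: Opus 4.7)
The plan is to derive a comparison formula for the smooth forms $u^E_p$ and $u^K_p$ on the common smooth locus, using the comparison formula for Andersson--Wulcan currents (Theorem~\ref{thm:Rcomparison}), and then conclude by Stokes' theorem. More concretely, let $(E,\varphi)$ and $(K,\psi)$ be Hermitian free resolutions of $G$ and $F$ respectively, and let $a : (K,\psi) \to (E,\varphi)$ be a chain map lifting $\alpha$, as in Proposition~\ref{prop:comparison-morphism}. For $f \in F$, first pick a representative $f_0 \in K_0$, so that $g_0 := a_0 f_0$ represents $\alpha(f) \in G$, and recall from Section~\ref{ssect:comparison-morphism} that for $\xi = [\xi_0] \in \Ext^p(G,\Ok)$ we have $\alpha^* \xi = [\xi_0 a_p]$. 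To prove functoriality for a test form $\beta \in H^{n,n-p}_{Z^c}$ (where $Z$ has pure codimension $p$ and contains $\supp F \cup \supp G$), it therefore suffices to establish
\begin{equation*}
    \int \xi_0 (u^E_p a_0 - a_p u^K_p) f_0 \wedge \dbar \beta = 0.
\end{equation*}

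The main step is to introduce the currents $V_k := U^E_k a_0 - a_k U^K_k$ on $\Omega$, where $U^E_k$ and $U^K_k$ are the standard extensions appearing in \eqref{eq:RU}. A direct computation using \eqref{eq:RU} together with the chain-map identity $a_p \psi_{p+1} = \varphi_{p+1} a_{p+1}$ yields
\begin{equation*}
    \dbar V_p = \varphi_{p+1} V_{p+1} - (R^E_p a_0 - a_p R^K_p).
\end{equation*}
Theorem~\ref{thm:Rcomparison} provides pseudomeromorphic currents $M_{p+1}$ and $M_p$ with $R^E_p a_0 - a_p R^K_p = \varphi_{p+1} M_{p+1} - \dbar M_p$, and since both $F$ and $G$ have codimension $\geq p$ the last part of that theorem forces $M_p = 0$. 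Hence $\dbar V_p = \varphi_{p+1}(V_{p+1} - M_{p+1})$, and since $\xi_0 \varphi_{p+1} = 0$ (by the choice of representative in $\ker \varphi_{p+1}^*$) and $f_0$ is holomorphic, this gives $\dbar(\xi_0 V_p f_0) = 0$ as a current on $\Omega$.

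Because $\beta$ has compact support, Stokes' theorem applied to the compactly supported $(n,n-1)$-current $\xi_0 V_p f_0 \wedge \beta$ yields
\begin{equation*}
    \int \xi_0 V_p f_0 \wedge \dbar \beta = \pm \int \dbar(\xi_0 V_p f_0) \wedge \beta = 0,
\end{equation*}
and on $\supp \dbar\beta \subseteq Z^c \subseteq \Omega \setminus (\supp F \cup \supp G)$ the currents $U^E_p$ and $U^K_p$ are smooth and agree with $u^E_p$ and $u^K_p$, so $V_p$ agrees there with $u^E_p a_0 - a_p u^K_p$, giving the required identity. I expect the main obstacle to be the correct handling of $V_p$ across the singular set: on the common smooth locus one has pointwise $\dbar v_p = \varphi_{p+1} v_{p+1}$ with $v_k := u^E_k a_0 - a_k u^K_k$, but upgrading this to an equality of currents on all of $\Omega$ (so that Stokes' theorem can legitimately be applied against a $\beta$ whose support may intersect $Z$) is exactly what the vanishing $M_p = 0$ from the comparison formula buys us.
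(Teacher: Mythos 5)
Your proof is correct and reaches the same conclusion as the paper, but by a genuinely different technical route, so it is worth comparing the two. The paper extracts from the \emph{proof} of \cite{LarComp}*{Theorem~3.2} the smooth comparison identity $u^E_p a_0 - a_p u^F_p = \dbar M_p - \varphi_{p+1}M_{p+1}$, valid on $\Omega\setminus Z$ where all the $u$-forms are smooth, and then kills the $\dbar M_p$ term by Stokes' theorem applied to the compactly supported smooth form $\xi_0 M_p f_0\wedge\dbar\beta$. In particular, the paper never needs $M_p=0$. You instead introduce the auxiliary currents $V_k = U^E_k a_0 - a_k U^K_k$ on all of $\Omega$, derive the current-level identity $\dbar V_p = \varphi_{p+1}V_{p+1} - (R^E_p a_0 - a_p R^K_p)$ from \eqref{eq:RU} and the chain-map relation, and then invoke the \emph{statement} of Theorem~\ref{thm:Rcomparison} (including its last assertion, $M_p=0$ under the codimension hypothesis) to conclude $\dbar(\xi_0 V_p f_0)=0$ on $\Omega$, after which Stokes applied to the compactly supported current $\xi_0 V_p f_0\wedge\beta$ finishes the job. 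The advantage of your route is that Theorem~\ref{thm:Rcomparison} is used as a black box — you don't need to dig into its proof to get a smooth-form version — at the cost of needing the extra vanishing $M_p=0$, which the paper avoids by restricting to $Z^c$ where everything is smooth. Both arguments depend on the same three ingredients ($R^E_p=\varphi_{p+1}U_{p+1}-\dbar U_p$ from \eqref{eq:RU}, $\xi_0\varphi_{p+1}=0$, and Stokes), and all the steps you carry out — the Leibniz manipulation (using that $a_0,a_p,f_0,\xi_0$ are holomorphic so $\dbar$ passes through), the Stokes argument for compactly supported currents, and the final identification of $V_p$ with $u^E_p a_0 - a_p u^K_p$ on $\supp\dbar\beta\subseteq Z^c$ — are sound.
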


By functoriality, we mean the diagram similar to \eqref{eq:functorial} is commutative for this pairing.
The functoriality then holds for $Z \supseteq (\supp F) \cup (\supp G)$.
However, in contrast to the pairing in \eqref{eq:local-duality-pairing} and \eqref{eq:general-duality-pairing},
where we have the injective map $H^p_Z(\Ok) \to H^p_W(\Ok)$ for $Z \subseteq W$, by Lemma~\ref{lma:HpZW},
it is not clear that we would have anything similar for the map $\Hom_\C(H^{n,n-p}_{Z^c},\C) \to \Hom_\C(H^{n,n-p}_{W^c},\C)$.

\begin{proof}
    We consider $\alpha : F \to G$, where $F$ and $G$ are finitely generated $\Ok$-modules of codimension $\geq p$.
    If $f \in F$, and $\xi \in \Ext^p(G,\Ok)$, then we want to prove that
    \begin{equation*}
        \la \alpha(f) , \xi \ra_{Lu} = \la f, \alpha^* \xi \ra_{Lu}.
    \end{equation*}
    We let $(K,\psi)$ and $(E,\varphi)$ be free resolutions of $F$ and $G$ respectively,
    and let $a : (K,\psi) \to (E,\varphi)$ be a morphism of complexes extending $\alpha : F \to G$
    as in Proposition~\ref{prop:comparison-morphism}. We also let $\pi$ denote both the natural surjections
    $\pi : K_0 \to \coker \psi_1 \cong F$ and $\pi : E_0 \to \coker \varphi_1 \cong G$.
    If $\pi( f_0 ) = f$, then $\pi(a_0 f_0 ) = \alpha f$.

    It follows from the proof of \cite{LarComp}*{Theorem~3.2}, that if one for $\ell = p$ and $\ell = p+1$,
    lets $M_\ell$ be the $\Hom(K_0,E_\ell)$-valued $(0,\ell-1)$-form
    \begin{equation*}
        M_\ell = \sum_{k=1}^{\ell-1} (u^E)_\ell^k a_k (u^F)_k^0,
    \end{equation*}
    which is smooth outside of $Z := (\supp G) \cup (\supp F)$, then
    \begin{equation*}
        u^E_p a_0 - a_p u^F_p = \dbar M_p - \varphi_{p+1} M_{p+1}
    \end{equation*}
    outside of $Z$. One then obtains that
    \begin{align*}
         &(\la \alpha f, \xi \ra_{Lu})(\beta) = \int \xi_0 u_p^E a_0 f_0 \wedge \dbar \beta = \\
        = &\int \xi_0 a_p u_p^F f_0 \wedge \dbar \beta + \int \xi_0 \dbar M_p f_0 \wedge \dbar \beta
        = (\la f, \alpha^* \xi \ra_{Lu})(\beta)
    \end{align*}
    where the term involving $\varphi_{p+1} M_{p+1}$ vanishes, since $\xi_0 \varphi_{p+1} = 0$, and
    the integral involving $\dbar M_p$ vanishes by Stokes' theorem, and in the last equality,
    we used that $\alpha^* \xi = [\xi_0 a_p]$.
\end{proof}

\begin{cor}
    The pairing \eqref{eq:pairing-lu} descends to a pairing
    \begin{equation*}
        G/G_{(p+1)} \times \Ext^p(G,\Ok) \to \Hom_\C(H^{n,n-p}_{Z^c},\C)
    \end{equation*}
\end{cor}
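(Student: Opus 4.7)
The plan is to reduce the statement to showing that the Lundqvist pairing $\la g, \xi \ra_{Lu}$ vanishes whenever $g \in G_{(p+1)}$, since $\C$-bilinearity then yields the desired descent. The cleanest way to establish this vanishing is via the functoriality of the Lundqvist pairing already proven in Proposition~\ref{prop:lu-functorial}, combined with the vanishing of $\Ext^p$ for modules of codimension $\geq p+1$ (Proposition~\ref{prop:ext-vanishing}).

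Concretely, fix $g \in G_{(p+1)}$ and $\xi \in \Ext^p(G,\Ok)$. Let $F := \Ok \cdot g \cong \Ok/\ann(g)$ be the cyclic submodule of $G$ generated by $g$, and let $\iota : F \hookrightarrow G$ denote the inclusion. Since $\supp F = \supp g$ has codimension $\geq p+1$, Proposition~\ref{prop:ext-vanishing} yields $\Ext^p(F,\Ok) = 0$, so $\iota^* \xi = 0$. Taking $Z$ to be our fixed ambient variety of pure codimension $p$ (which contains $\supp F \subseteq \supp G$) and applying Proposition~\ref{prop:lu-functorial} to $\iota$, we get
\begin{equation*}
    \la g, \xi \ra^G_{Lu} \;=\; \la \iota(g), \xi \ra^G_{Lu} \;=\; \la g, \iota^* \xi \ra^F_{Lu} \;=\; 0.
\end{equation*}
Thus the pairing vanishes on $G_{(p+1)}$ in the first argument and descends as claimed. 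I do not anticipate any serious obstacle; the only point requiring care is verifying that Proposition~\ref{prop:lu-functorial} applies to $\iota : F \hookrightarrow G$, which it does since both modules are finitely generated and of codimension $\geq p$.
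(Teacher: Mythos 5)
Your proof is correct, but it takes a genuinely different (if closely related) route from the paper's. The paper considers the surjection $\alpha : G \to G/G_{(p+1)}$; by \eqref{eq:extp-pure-isom} this induces an isomorphism $\alpha^* : \Ext^p(G/G_{(p+1)},\Ok) \to \Ext^p(G,\Ok)$, so one can write $\xi = \alpha^* \xi_1$ and apply functoriality to get $\la g,\xi\ra = \la \alpha(g),\xi_1\ra = \la 0, \xi_1\ra = 0$. You instead go ``downward'': for $g \in G_{(p+1)}$ you take the inclusion $\iota : F = \Ok g \hookrightarrow G$, observe that $\codim F \geq p+1$ forces $\Ext^p(F,\Ok) = 0$ by Proposition~\ref{prop:ext-vanishing}, and apply functoriality to conclude $\la g,\xi\ra^G = \la g, \iota^*\xi\ra^F = 0$. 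Both arguments ultimately reduce to Proposition~\ref{prop:ext-vanishing} combined with Proposition~\ref{prop:lu-functorial}; the paper's route additionally packages the vanishing into the isomorphism \eqref{eq:extp-pure-isom} (itself proved via the long exact sequence of $\Ext$ and the same vanishing). Your version is slightly more economical in that it invokes only the vanishing $\Ext^p(F,\Ok) = 0$ directly rather than first establishing an $\Ext^p$-isomorphism, at the cost of introducing an auxiliary submodule $F$. Both are perfectly valid; the paper's formulation has the mild advantage of making manifest that $\Ext^p(G,\Ok)$ depends only on $G/G_{(p+1)}$, a fact reused elsewhere.
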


\begin{proof}
    By \eqref{eq:extp-pure-isom}, the surjection $\alpha : G \mapsto G/G_{(p+1)}$ induces
    an isomorphism $\Ext^p(G,\Ok) \cong \Ext^p(G/G^{(p+1)},\Ok)$.
    Thus, we can write $\xi \in \Ext^p(G,\Ok)$ as $\alpha^* \xi_1 \in \Ext^p(G/G_{(p+1)},\Ok)$.
    Thus, if $g \in G_{(p+1)}$, $\alpha(g) = 0$, so
    \begin{equation*}
        \la g, \xi \ra = \la g, \alpha^* \xi_1 \ra = \la \alpha(g), \xi_1 \ra = 0.
    \end{equation*}
\end{proof}

We now note the following consequence of the results of Lundqvist.

\begin{prop} \label{prop:nondeg-lu}
    Let $G = \Ok/J$, where $J$ has pure codimension $p$. Then the pairing \eqref{eq:pairing-lu}
    is non-degenerate in the first argument.
\end{prop}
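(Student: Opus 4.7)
The plan is to derive the proposition as an essentially immediate consequence of the reformulated main theorem of Lundqvist, \eqref{eq:lund-reformulation}, by unwinding what non-degeneracy in the first argument of the pairing \eqref{eq:pairing-lu} says and then appealing directly to that result.

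First I would fix $G = \Ok/J$ with $J$ of pure codimension $p$, a free resolution $(E,\varphi)$ of $G$, and the smooth forms $u_k$ satisfying \eqref{eq:lundqvist-udef} on $\Omega \setminus Z$ that enter into the definition of \eqref{eq:pairing-lu}. Suppose $g \in G$ is such that $\la g, \xi \ra_{Lu} = 0$ as an element of $\Hom_\C(H^{n,n-p}_{Z^c},\C)$ for every $\xi \in \Ext^p(G,\Ok)$. Choosing a lift $g_0 \in \Ok$ of $g$ in the sense of \eqref{eq:grepr}, and recalling that every class in $\Ext^p(G,\Ok) \cong H^p(\Hom(E_\bullet,\Ok))$ is represented by some $\xi_0 \in \ker \varphi_{p+1}^*$, the hypothesis unwinds to the assertion that
\[
    \int \xi_0 u_p g_0 \wedge \dbar \beta = 0
\]
for every such $\xi_0$ and every $\beta \in H^{n,n-p}_{Z^c}$; this statement is independent of the chosen representatives by the well-definedness lemma preceding the proposition.

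At this point \eqref{eq:lund-reformulation} can be invoked directly: it asserts precisely that vanishing of this family of integrals is equivalent to $g_0 \in J$, i.e., to $g = 0$ in $G = \Ok/J$. This is exactly non-degeneracy in the first argument, so the proposition follows. Since this is really just a translation of Lundqvist's result into the pairing formulation, there is no substantive obstacle; the only care required is the book-keeping that the data $(E,\varphi)$ and $u_p$ used in \eqref{eq:pairing-lu} is the same data appearing in Lundqvist's theorem, and that the ranges of $\xi_0$ and $\beta$ in the two formulations match, both of which are ensured by the conventions fixed in Section~\ref{ssect:lund}.
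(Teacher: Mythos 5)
Your argument is essentially the paper's first proof: unwind the definition of non-degeneracy in the first argument and invoke \eqref{eq:lund-reformulation} directly. The structure is right. However, there is a point you dismiss too quickly. You write that ``the ranges of $\xi_0$ and $\beta$ in the two formulations match, both of which are ensured by the conventions fixed in Section~\ref{ssect:lund},'' but this is precisely where care is needed. In Section~\ref{ssect:lund}, Lundqvist's theorem \eqref{eq:lund-reformulation} is formulated with $Z = Z(\supp G) = Z(J)$, while the pairing \eqref{eq:pairing-lu} (and hence the proposition) is set up for an \emph{arbitrary} $Z \supseteq \supp G$ of pure codimension $p$, which may be strictly larger. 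For such a $Z$, a test form in $H^{n,n-p}_{Z^c}$ must be $\dbar$-closed in a neighbourhood of the larger set $Z$, so $H^{n,n-p}_{Z^c} \subseteq H^{n,n-p}_{Z(J)^c}$; vanishing of the integrals over this smaller class of test forms is a weaker hypothesis, and non-degeneracy is therefore a strictly stronger assertion than what \eqref{eq:lund-reformulation} as literally stated delivers. The paper's own proof addresses exactly this: it observes that, although the results of Lundqvist are stated for $Z = Z(J)$, an inspection of his proof shows it goes through for any $Z \supseteq Z(J)$ of pure codimension $p$. Your proof should make this extension explicit rather than asserting that the ranges trivially match. (The paper also sketches an alternative route, via linkage and functoriality reducing to Passare's complete-intersection case, which you do not need but may wish to be aware of.)
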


\begin{proof}
    This follows directly from \eqref{eq:lund-reformulation}. In the proof in \cite{Lund1} and \cite{Lund2},
    this is formulated for $Z = Z(J)$, but from the proof, it is seen that this proof works just as well
    for any $Z \supseteq Z(J)$ of pure codimension $p$.

    Alternatively, just as in the proof of Lemma~\ref{lma:reduction-first}, we could use the theory of linkage,
    and functoriality of the pairing which is proven below, to reduce non-degeneracy to the complete intersection
    case from Passare, \eqref{eq:passare-pairing}.
\end{proof}

Thus, combining non-degeneracy in the first argument of \eqref{eq:pairing-lu} by Lundqvist,
and non-degeneracy in the second argument from Passare, \eqref{eq:passare-pairing}, and the variant of
Lemma~\ref{lma:reduction-second} in this setting, we obtain an independent proof of
Theorem~\ref{thm:cohomological-duality} when the pairing is given by \eqref{eq:pairing-lu}.

\begin{lma}
    If $G$ has codimension $\geq p$, then the pairing \eqref{eq:cohomological-duality-pairing}, defined
    as the composition of the pairing \eqref{eq:general-duality-pairing-descended} with \eqref{eq:local-cohomology-dual}
    coincides with the pairing \eqref{eq:pairing-lu}.
\end{lma}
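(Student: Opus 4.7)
The plan is to unfold both sides using the Andersson-Wulcan representation of the pairing \eqref{eq:general-duality-pairing-descended} and then exploit the relation \eqref{eq:RU} between $R^E_p$ and the smooth forms $u_p, u_{p+1}$. By Proposition~\ref{prop:aw-gr-coincide}, the pairing \eqref{eq:general-duality-pairing-descended} is computed by $\la g, \xi\ra_{AW} = \xi_0 R^E_p g_0$, viewed as a $\dbar$-closed $(0,p)$-current with support in $Z$ via \eqref{eq:lc-current-repr}. Composing with the evaluation map \eqref{eq:local-cohomology-dual}, the composite pairing sends $\gamma \in H^{n,n-p}_{Z^c}$ to $\int \xi_0 R^E_p g_0 \wedge \gamma$. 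The goal is to rewrite this integral as $\int \xi_0 u_p g_0 \wedge \dbar\gamma$, which is \eqref{eq:pairing-lu}.

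The key computation proceeds in three steps. First, substitute \eqref{eq:RU}: since $\xi_0 \in \ker \varphi_{p+1}^*$, the term $\xi_0 \varphi_{p+1} U_{p+1} g_0$ vanishes, leaving
\begin{equation*}
\xi_0 R^E_p g_0 = -\dbar(\xi_0 U_p g_0).
\end{equation*}
Second, apply Stokes' theorem to the compactly supported $(n,n)$-current $\xi_0 U_p g_0 \wedge \gamma$ (noting $\xi_0 U_p g_0$ has bidegree $(0,p-1)$ and $\gamma$ has bidegree $(n,n-p)$):
\begin{equation*}
\int \dbar(\xi_0 U_p g_0) \wedge \gamma = (-1)^{p} \int \xi_0 U_p g_0 \wedge \dbar \gamma.
\end{equation*}
Third, observe that $\gamma$ is $\dbar$-closed in a neighborhood $\Omega$ of $Z$, so $\supp \dbar\gamma$ is disjoint from $Z$. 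Since $U_p$ is the standard extension of $u_p$ and hence agrees with the smooth form $u_p$ on $\Omega^c$, we may replace $U_p$ by $u_p$ in the last integral, yielding $\int \xi_0 u_p g_0 \wedge \dbar\gamma$, which is exactly $\la g, \xi\ra_{Lu}(\gamma)$ from \eqref{eq:pairing-lu}.

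The main obstacle is essentially cosmetic: one has to verify that the sign conventions implicit in the definition \eqref{eq:local-cohomology-dual} of $R$, and in Stokes' theorem for currents of mixed bidegree, are compatible with the sign convention built into the definition \eqref{eq:pairing-lu}. This is a routine check rather than a substantive difficulty; everything else is direct manipulation using Proposition~\ref{prop:aw-gr-coincide}, identity \eqref{eq:RU}, the defining condition $\xi_0 \varphi_{p+1}^*=0$ on representatives of $\Ext^p$-classes, and the crucial property that $\dbar\gamma$ avoids the singular locus on which $U_p$ fails to be smooth.
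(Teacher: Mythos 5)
Your proof is correct and follows essentially the same path as the paper's: plug in the Andersson--Wulcan representation $\xi_0 R^E_p g_0$ of the pairing, use \eqref{eq:RU} together with $\xi_0\varphi_{p+1}=0$ to replace $R^E_p$ by $-\dbar U_p$, apply Stokes, and then replace $U_p$ by $u_p$ on $\supp\dbar\gamma$ since that set avoids $Z$. The only cosmetic difference is that you track the signs more explicitly than the paper does (which silently drops the minus in $R^E_p = \varphi_{p+1}U_{p+1}-\dbar U_p$), and you describe $\xi_0 U_p g_0\wedge\gamma$ as an $(n,n)$-current when its bidegree is in fact $(n,n-1)$; neither affects the argument.
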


\begin{proof}
    In order to prove this, we use the representation \eqref{eq:pairing-aw} of the pairing \eqref{eq:general-duality-pairing-descended}.
    Taking the image of this under \eqref{eq:local-cohomology-dual}, and letting it act on $\beta \in H^{n,n-p}_{Z^c}$,
    \begin{equation*}
        R(\la g, [\xi_0]\ra_{AW})(\beta) = \int \xi_0 R^E_p g_0 \wedge \beta.
    \end{equation*}
    By \eqref{eq:RU}, and the fact that $\xi_0 \varphi_{p+1} = 0$, we get that
    \begin{equation*}
        R(\la g, [\xi_0]\ra_{AW})(\beta) = \int \xi_0 \dbar U_p g_0 \wedge \beta,
    \end{equation*}
    and by Stokes, and the fact that $U_p = u_p$ on $\supp \dbar \beta$, where $u_p$ is smooth,
    we get that
    \begin{equation*}
        R(\la g, [\xi_0]\ra_{AW})(\beta) = \int \xi_0 u_p g_0 \wedge \dbar \beta = (\la g, [\xi_0]\ra_{Lu})(\beta).
    \end{equation*}
\end{proof}

\subsection{Proof of Theorem~\ref{thm:cohomological-duality}}

Note that non-degeneracy in Theorem~\ref{thm:cohomological-duality} gives non-degeneracy in Theorem~\ref{thm:local-duality},
so the results of Lundqvist, as discussed in the previous section give non-degeneracy in the first argument in
Theorem~\ref{thm:local-duality} for $G = \Ok/J$, where $J$ has pure codimension $p$. We now show that we can always
go the other way around as well.

\begin{lma} \label{lma:lc-injective}
    The map \eqref{eq:local-cohomology-dual} is injective.
\end{lma}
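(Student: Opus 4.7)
The plan is to reduce the injectivity of $R$ to Passare's duality \eqref{eq:passare-duality} via a carefully chosen complete intersection ideal. Let $T \in H^p_Z(\Ok)$ satisfy $R(T) = 0$, and use \eqref{eq:ch-loc-coh} to represent $T$ by its unique Coleff-Herrera current $\mu \in CH_Z$; the goal is to show $\mu = 0$. Suppose for contradiction $\mu \neq 0$. Then $\supp \mu$ is a pure codimension $p$ subvariety of $Z$ (if it had strictly larger codimension, the pseudomeromorphic current $\mu$ would vanish by the dimension principle, Proposition~\ref{proppmdim}), and consequently $J_Z \subseteq \sqrt{\ann \mu}$, where $J_Z \subseteq \Ok$ denotes the radical ideal of $Z$.

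The key construction is a complete intersection ideal $I$ of codimension $p$ enjoying \emph{both} $I \subseteq \ann \mu$ and $Z(I) \supseteq Z$. I would take a regular sequence $f_1,\dots,f_p$ inside $J_Z$ (possible since $J_Z$ has codimension $p$), pick $N_i \geq 1$ with $f_i^{N_i} \in \ann \mu$, and set $I := J(f_1^{N_1},\dots,f_p^{N_p})$. Each $f_i$ vanishes on $Z$, so $W := Z(I) \supseteq Z$. Since $W \supseteq \supp \mu$, the current $\mu$ lies in $CH_W$ and represents an element of $H^p_W(\Ok)$ annihilated by $I$; applying Lemma~\ref{lma:hom-to-extI} with $H = \Ok/I$ and $J = I$, Lemma~\ref{lma:ext-ci}, and the representation \eqref{eq:extrepr3}, I obtain $h \in \Ok$ with
\begin{equation*}
    \mu = h\,\dbar\frac{1}{f_p^{N_p}} \wedge \cdots \wedge \dbar\frac{1}{f_1^{N_1}}
\end{equation*}
as currents, the cohomology-to-current upgrade following from uniqueness of Coleff-Herrera representatives \eqref{eq:ch-loc-coh}.

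To conclude, any $\beta \in H^{n,n-p}_{W^c}$ is $\dbar$-closed in a neighborhood of $W \supseteq Z$, so $\beta \in H^{n,n-p}_{Z^c}$ and $R(T) = 0$ yields $\int \mu \wedge \beta = 0$. The same Stokes' theorem computation carried out at the end of Section~\ref{ssect:lund-def}, applied to the Koszul complex of $(f_1^{N_1},\dots,f_p^{N_p})$, rewrites this as $\int h B_f \wedge \dbar \beta = 0$ where $B_f = u_p$ is Passare's form for $I$. Passare's duality \eqref{eq:passare-duality} then forces $h \in I$, and \eqref{eq:ch-duality} gives $\mu = 0$, a contradiction. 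The main obstacle is arranging $I \subseteq \ann \mu$ and $Z(I) \supseteq Z$ simultaneously, since $J_Z$ and $\ann \mu$ need not be comparable; the power-of-regular-sequence trick resolves this by exploiting $J_Z \subseteq \sqrt{\ann \mu}$.
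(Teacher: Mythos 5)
Your proof is correct, but it takes a genuinely different route from the paper. The paper's argument is local and explicit: it represents $T$ by its unique Coleff--Herrera representative $\mu \in CH_Z$, uses the SEP to reduce to showing $\mu|_{Z_{\rm reg}}=0$, writes $\mu = \sum_{|\alpha|\le M} a_\alpha(z)\,\dbar(1/w^\alpha)$ in coordinates $(z,w)$ adapted to $Z_{\rm reg}$ via \cite{AndCH}*{Lemma~3.6}, and pairs against explicit test forms $\beta^\alpha_z$ built from a cut-off in $w$ and the Bochner--Martinelli kernel in $\zeta-z$; the Bochner--Martinelli formula then extracts $a_\alpha(z)$ from $\int\mu\wedge\beta^\alpha_z$. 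Your proof instead manufactures a complete intersection $I=(f_1^{N_1},\dots,f_p^{N_p})\subseteq\ann\mu$ with $Z(I)\supseteq Z$, identifies $\mu$ with $h\,\dbar(1/f_p^{N_p})\wedge\cdots\wedge\dbar(1/f_1^{N_1})$ via Lemma~\ref{lma:hom-to-extI}, Lemma~\ref{lma:ext-ci}, \eqref{eq:extrepr3} and uniqueness of Coleff--Herrera representatives, and then invokes Passare's duality \eqref{eq:passare-duality} together with \eqref{eq:ch-duality}. Both are valid, but with a relevant trade-off: Lemma~\ref{lma:lc-injective} is used precisely to give a path from the non-degeneracy of Theorem~\ref{thm:local-duality-general} to that of Theorem~\ref{thm:cohomological-duality} that does not presuppose the Passare--Lundqvist machinery, and the paper's elementary proof achieves this, while your route leans on \eqref{eq:passare-duality} and so only gives a logically correct but less independent derivation.

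One step needs tightening. You write ``consequently $J_Z\subseteq\sqrt{\ann\mu}$'' as though it followed from $\supp\mu\subseteq Z$, but that inclusion is not automatic for the module generated by a current (in general one only has $\supp\mu\subseteq Z(\ann\mu)$). The clean justification in the paper's framework is via the direct-limit description \eqref{eq:localcohomology}: the class $T=[\mu]$ lies in the image of $\Ext^p(\Ok/J_Z^t,\Ok)\to H^p_Z(\Ok)$ for some $t\gg1$, so $gT=0$ for all $g\in J_Z^t$; since $g\mu$ is again a Coleff--Herrera current representing $gT$, uniqueness in \eqref{eq:ch-loc-coh} forces $g\mu=0$, giving $J_Z^t\subseteq\ann\mu$. (Also, the contradiction framing and the remark that $\supp\mu$ has pure codimension $p$ are superfluous; the argument shows $\mu=0$ directly.)
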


\begin{proof}
    By representing elements in $H^p_Z(\Ok)$ as Coleff-Herrera currents, see \eqref{eq:ch-loc-coh} and using that
    such currents have the SEP, it is enough to assume that we
    are on $Z_{\rm reg}$, and we then choose coordinates such that locally,
    \begin{equation*}
        Z_{\rm reg} = \{ w_1 = \dots = w_p = 0 \} \subseteq \C^{n-p}_z \times \C^p_w.
    \end{equation*}
    If we take $T \in H^p_Z(\Ok)$, and take its representative $\mu \in CH_Z$, then by \cite{AndCH}*{Lemma~3.6},
    we can write $\mu$ as
    \begin{equation*}
        \mu = \sum_{|\alpha| \leq M} a_\alpha(z) \dbar \frac{1}{w^\alpha}.
    \end{equation*}
    We then let $z \in Z_{\rm reg}$ be fixed, and denote the variables $(\zeta,w)$ on $\C^n$, and define the test-form
    \begin{equation*}
        \beta^\alpha_z := \chi(|w|) w^{\alpha-{\bf 1}} dw_1 \wedge \dots \wedge dw_p \wedge  \dbar \chi(|\zeta-z|) k_{BM}(\zeta-z),
    \end{equation*}
    where $\chi(t)$ is a cut-off function which is $\equiv 1$ for $t$ sufficiently close to $0$, and which
    is $\equiv 0$ for $t$ sufficiently large, and $k_{BM}$ is the Bochner-Martinelli kernel in $(n-p)$ variables.
    Since the part of $\beta^\alpha_z$ depending on $\zeta-z$ is $\dbar$-closed, $\dbar \beta^\alpha_z$ has support on $\supp \chi'(|w|) \cap \supp \chi'(|\zeta-z|)$,
    which does not intersect $Z_{\rm reg}$, so $\beta^\alpha_z \in H^{n,n-p}_{Z^c}$.

    If we want to show that the map is injective, we thus assume that
    \begin{equation*}
        \int \mu \wedge \beta^\alpha_z = 0,
    \end{equation*}
    which by the Bochner-Martinelli formula gives that
    \begin{equation*}
        a_\alpha(z) = 0,
    \end{equation*}
    and thus, the map is injective on $Z_{\rm reg}$.
\end{proof}

\begin{remark} \label{rem:ci-reduction}
    Just as for the previous pairings, if $G$ is a finitely generated $\Ok$-module, one can
    always find a complete intersection ideal $I \subseteq \ann G_{(p)}$ and a surjective morphism
    $\alpha : (\Ok/I)^r \to G_{(p)}$. By the functoriality, one then gets if $g = \pi(f)$ that
    \begin{equation*}
        \la g, \xi \ra(\beta) = \la f, \alpha^* \xi \ra(\beta)
    \end{equation*}
    for $\beta \in H^{n,n-p}_{Z(I)^c}$. Note that this does not work for any $\beta \in H^{n,n-p}_{Z^c}$,
    but just the ones which are $\dbar$-closed outside of $Z(I)$.

    For any $Z$ of codimension $p$, one can always find a complete intersection $W$ of codimension $p$
    containing $Z$, but it is not clear to us whether for $\beta \in H^{n,n-p}_{Z^c}$, one can always
    find a complete intersection $W \supseteq Z$ of codimension $p$ such that $W \cap \supp \dbar \beta = \emptyset$,
    and thus, it is not clear that one can always express the residue \eqref{eq:pairing-lu} in terms of the
    pairing for complete intersections as in Example~\ref{ex:ci-reduction}.
    When $G$ is Artinian however, this reduction always work, just as in Example~\ref{ex:artinian},
    as the following example shows.
\end{remark}

\begin{ex} \label{ex:lj}
    Using the notation from Example~\ref{ex:artinian}, and the expression \eqref{eq:pairing-pa}
    for the pairing in the complete intersection case, we obtain the following expression,
\begin{equation} \label{eq:pairing-ljp}
    \la g, \xi \ra_{Lu}(\beta) = \frac{1}{(2\pi i)^n} \int \xi_0 a_n( h e_1 \wedge \dots \wedge e_n) \wedge B \wedge \dbar \beta,
\end{equation}
    where
    \begin{equation*}
        B(z) = \frac{\sum (-1)^{k-1} \overline{z_k^{N_k}} \widehat{d\overline{z_k}^{N_k}}}{(|z_1^{N_1}|^2 + \dots + |z_n^{N_n}|^2)^n},
    \end{equation*}
    where $\widehat{d\overline{z_k}^{N_k}}$ means that $d\overline{z_k}^{N_k}$ is removed from
    $d\overline{z_1}^{N_1} \wedge \dots \wedge d\overline{z_n}^{N_n}$.
    Alternatively, using the expression \eqref{eq:artinian-ci-explicit}, we obtain the following expression,
\begin{equation} \label{eq:pairing-ljp2}
    \la g, \xi \ra_{Lu}(\beta) = \frac{1}{(2\pi i)^n} \int_{ \cap \{ |z_i^{N_i}| = \epsilon_i \}} \frac{\xi_0 a_n( h e_1 \wedge \dots \wedge e_n)}{z_1^{N_1}\dots z_n^{N_n}} \wedge \beta,
\end{equation}
    for any $(n,0)$-form $\beta$ which is holomorphic near $\{ 0 \}$.
    This type of explicit expression was used by Lejeune-Jalabert to obtain explicit expressions for the
    fundamental cycle of Artinian ideals, see \cite{LJ2}*{p. 239}.
\end{ex}

\begin{bibdiv}
\begin{biblist}

\bib{AndCH}{article}{
   author={Andersson, Mats},
   title={Uniqueness and factorization of Coleff-Herrera currents},
   journal={Ann. Fac. Sci. Toulouse Math.},
   volume={18},
   date={2009},
   number={4},
   pages={651--661},
}

\bib{AndNoeth}{article}{
   author={Andersson, Mats},
   title={Coleff-Herrera currents, duality, and Noetherian operators},
   journal={Bull. Soc. Math. France},
   volume={139},
   date={2011},
   number={4},
   pages={535--554},
}

\bib{AW1}{article}{
   author={Andersson, Mats},
   author={Wulcan, Elizabeth},
   title={Residue currents with prescribed annihilator ideals},
   journal={Ann. Sci. \'Ecole Norm. Sup.},
   volume={40},
   date={2007},
   number={6},
   pages={985--1007},
}

\bib{AW2}{article}{
   author={Andersson, Mats},
   author={Wulcan, Elizabeth},
   title={Decomposition of residue currents},
   journal={J. Reine Angew. Math.},
   volume={638},
   date={2010},
   pages={103--118},
}

\bib{BS}{book}{
   author={B{\u{a}}nic{\u{a}}, Constantin},
   author={St{\u{a}}n{\u{a}}{\c{s}}il{\u{a}}, Octavian},
   title={Algebraic methods in the global theory of complex spaces},
   publisher={Editura Academiei, Bucharest; John Wiley \& Sons, London-New
   York-Sydney},
   date={1976},
   pages={296},
}

\bib{BjRings}{book}{
   author={Bj{\"o}rk, J.-E.},
   title={Rings of differential operators},
   series={North-Holland Mathematical Library},
   volume={21},
   publisher={North-Holland Publishing Co., Amsterdam-New York},
   date={1979}
}

\bib{BjDmod}{article}{
   author={Bj{\"o}rk, Jan-Erik},
   title={$\scr D$-modules and residue currents on complex manifolds},
   status={Preprint, Stockholm},
   date={1996},
}

\bib{BjAbel}{article}{
   author={Bj{\"o}rk, Jan-Erik},
   title={Residues and $\scr D$-modules},
   conference={
      title={The legacy of Niels Henrik Abel},
   },
   book={
      publisher={Springer},
      place={Berlin},
   },
   date={2004},
   pages={605--651},
}

\bib{BH}{book}{
   author={Bruns, Winfried},
   author={Herzog, J\"{u}rgen},
   title={Cohen-Macaulay rings},
   series={Cambridge Studies in Advanced Mathematics},
   volume={39},
   publisher={Cambridge University Press, Cambridge},
   date={1993},
}

\bib{CH}{book}{
   author={Coleff, Nicolas R.},
   author={Herrera, Miguel E.},
   title={Les courants r\'esiduels associ\'es \`a une forme m\'eromorphe},
   series={Lecture Notes in Mathematics},
   volume={633},
   publisher={Springer},
   place={Berlin},
   date={1978},
}

\bib{DS1}{article}{
   author={Dickenstein, A.},
   author={Sessa, C.},
   title={Canonical representatives in moderate cohomology},
   journal={Invent. Math.},
   volume={80},
   date={1985},
   number={3},
   pages={417--434},
}

\bib{DS2}{article}{
   author={Dickenstein, Alicia},
   author={Sessa, Carmen},
   title={R\'esidus de formes m\'eromorphes et cohomologie mod\'er\'ee},
   conference={
      title={G\'eom\'etrie complexe},
      address={Paris},
      date={1992},
   },
   book={
      series={Actualit\'es Sci. Indust.},
      volume={1438},
      publisher={Hermann},
      place={Paris},
   },
   date={1996},
   pages={35--59},
}

\bib{Eis}{book}{
   author={Eisenbud, David},
   title={Commutative algebra},
   series={Graduate Texts in Mathematics},
   volume={150},
   note={With a view toward algebraic geometry},
   publisher={Springer-Verlag},
   place={New York},
   date={1995},
}

\bib{EHV}{article}{
   author={Eisenbud, David},
   author={Huneke, Craig},
   author={Vasconcelos, Wolmer},
   title={Direct methods for primary decomposition},
   journal={Invent. Math.},
   volume={110},
   date={1992},
   number={2},
   pages={207--235},
}

\bib{FH}{article}{
   author={Fouli, Louiza},
   author={Huneke, Craig},
   title={What is a system of parameters?},
   journal={Proc. Amer. Math. Soc.},
   volume={139},
   date={2011},
   number={8},
   pages={2681--2696},
}

\bib{GR}{book}{
   author={Grauert, Hans},
   author={Remmert, Reinhold},
   title={Coherent analytic sheaves},
   series={Grundlehren der Mathematischen Wissenschaften [Fundamental
   Principles of Mathematical Sciences]},
   volume={265},
   publisher={Springer-Verlag, Berlin},
   date={1984},
}

\bib{GH}{book}{
   author={Griffiths, Phillip},
   author={Harris, Joseph},
   title={Principles of algebraic geometry},
   note={Pure and Applied Mathematics},
   publisher={Wiley-Interscience [John Wiley \& Sons]},
   place={New York},
   date={1978},
}

\bib{HLocal}{book}{
   author={Hartshorne, Robin},
   title={Local cohomology},
   series={A seminar given by A. Grothendieck, Harvard University, Fall},
   volume={1961},
   publisher={Springer-Verlag, Berlin-New York},
   date={1967},
}

\bib{HGenDiv}{article}{
   author={Hartshorne, Robin},
   title={Generalized divisors on Gorenstein schemes},
   booktitle={Proceedings of Conference on Algebraic Geometry and Ring
   Theory in honor of Michael Artin, Part III (Antwerp, 1992)},
   journal={$K$-Theory},
   volume={8},
   date={1994},
   number={3},
   pages={287--339},
}

\bib{HL}{book}{
   author={Huybrechts, Daniel},
   author={Lehn, Manfred},
   title={The geometry of moduli spaces of sheaves},
   series={Cambridge Mathematical Library},
   edition={2},
   publisher={Cambridge University Press, Cambridge},
   date={2010},
}

\bib{Iver}{book}{
   author={Iversen, Birger},
   title={Lecture notes on local rings},
   note={Edited and with a preface by Holger Andreas Nielsen},
   publisher={World Scientific Publishing Co. Pte. Ltd., Hackensack, NJ},
   date={2014},
}

\bib{24Loc}{book}{
   author={Iyengar, Srikanth B.},
   author={Leuschke, Graham J.},
   author={Leykin, Anton},
   author={Miller, Claudia},
   author={Miller, Ezra},
   author={Singh, Anurag K.},
   author={Walther, Uli},
   title={Twenty-four hours of local cohomology},
   series={Graduate Studies in Mathematics},
   volume={87},
   publisher={American Mathematical Society},
   place={Providence, RI},
   date={2007},
}

\bib{LJ1}{article}{
   author={Lejeune-Jalabert, Monique},
   title={Remarque sur la classe fondamentale d'un cycle},
   journal={C. R. Acad. Sci. Paris S\'er. I Math.},
   volume={292},
   date={1981},
   number={17},
   pages={801--804},
}

\bib{LJ2}{article}{
   author={Lejeune-Jalabert, M.},
   title={Liaison et r\'esidu},
   conference={
      title={Algebraic geometry},
      address={La R\'abida},
      date={1981},
   },
   book={
      series={Lecture Notes in Math.},
      volume={961},
      publisher={Springer, Berlin},
   },
   date={1982},
   pages={233--240},
}

\bib{Lund1}{article}{
   author={Lundqvist, Johannes},
   title={A local Grothendieck duality theorem for Cohen-Macaulay ideals},
   journal={Math. Scand.},
   volume={111},
   date={2012},
   number={1},
   pages={42--52},
}

\bib{Lund2}{article}{
   author={Lundqvist, Johannes},
   title={A local duality principle for ideals of pure dimension},
   status={Preprint},
   date={2013},
   eprint={arXiv:1306.6252 [math.CV]},
   url={http://arxiv.org/abs/1306.6252},
}

\bib{LarComp}{article}{
   author={L\"ark\"ang, Richard},
   title={A comparison formula for residue currents},
   status={to appear},
   journal={Math. Scand.},
   eprint={arXiv:1207.1279 [math.CV]},
   url={http://arxiv.org/abs/1207.1279},
}

\bib{LarDua}{article}{
   author={L{\"a}rk{\"a}ng, Richard},
   title={On the duality theorem on an analytic variety},
   journal={Math. Ann.},
   volume={355},
   date={2013},
   number={1},
   pages={215--234},
}

\bib{LW1}{article}{
   author={L{\"a}rk{\"a}ng, Richard},
   author={Wulcan, Elizabeth},
   title={Computing residue currents of monomial ideals using comparison
   formulas},
   journal={Bull. Sci. Math.},
   volume={138},
   date={2014},
   number={3},
   pages={376--392},
}

\bib{Markoe}{article}{
   author={Markoe, Andrew},
   title={A characterization of normal analytic spaces by the homological
   codimension of the structure sheaf},
   journal={Pacific J. Math.},
   volume={52},
   date={1974},
   pages={485--489},
}

\bib{PMScand}{article}{
   author={Passare, Mikael},
   title={Residues, currents, and their relation to ideals of holomorphic
   functions},
   journal={Math. Scand.},
   volume={62},
   date={1988},
   number={1},
   pages={75--152},
}

\bib{PS}{article}{
   author={Peskine, C.},
   author={Szpiro, L.},
   title={Liaison des vari\'et\'es alg\'ebriques. I},
   language={French},
   journal={Invent. Math.},
   volume={26},
   date={1974},
   pages={271--302},
}

\bib{PTY}{article}{
   author={Passare, Mikael},
   author={Tsikh, August},
   author={Yger, Alain},
   title={Residue currents of the Bochner-Martinelli type},
   journal={Publ. Mat.},
   volume={44},
   date={2000},
   number={1},
   pages={85--117},
}

\bib{Roos}{article}{
   author={Roos, Jan-Erik},
   title={Bidualit\'e et structure des foncteurs d\'eriv\'es de $\varinjlim$
   dans la cat\'egorie des modules sur un anneau r\'egulier},
   journal={C. R. Acad. Sci. Paris},
   volume={254},
   date={1962},
   pages={1720--1722},
}

\bib{Sharp}{article}{
   author={Sharp, Rodney Y.},
   title={Dualizing complexes for commutative Noetherian rings},
   journal={Math. Proc. Cambridge Philos. Soc.},
   volume={78},
   date={1975},
   number={3},
   pages={369--386},
}

\bib{ST}{book}{
   author={Siu, Yum-Tong},
   author={Trautmann, G{\"u}nther},
   title={Gap-sheaves and extension of coherent analytic subsheaves},
   series={Lecture Notes in Mathematics, Vol. 172},
   publisher={Springer-Verlag, Berlin-New York},
   date={1971},
}


\bib{Vasc}{book}{
   author={Vasconcelos, Wolmer V.},
   title={Computational methods in commutative algebra and algebraic
   geometry},
   series={Algorithms and Computation in Mathematics},
   volume={2},
   publisher={Springer-Verlag},
   place={Berlin},
   date={1998},
}

\end{biblist}
\end{bibdiv}

\end{document}